\numberwithin{equation}{section}
\theoremstyle{definition}
\newtheorem{theorem}{Theorem}[section]
\newtheorem{lemma}[theorem]{Lemma}
\newtheorem{corollary}[theorem]{Corollary}
\newtheorem{proposition}[theorem]{Proposition}
\theoremstyle{remark}
\newtheorem{remark}[theorem]{Remark}
\newcommand{\tr}{\text{tr}}
\newcommand{\we}{\wedge}
\newcommand{\R}{\mathbb{R}}
\newcommand{\la}{\lambda}
\newcommand{\p}{{\partial}}
\newcommand{\mct}{\mathcal{T}_h}
\newcommand{\curl}{{\ensuremath\mathop{\mathrm{curl}\,}}}
\newcommand{\grad}{{\ensuremath\mathop{\mathrm{grad}\,}}}
\newcommand{\dive}{{\ensuremath\mathop{\mathrm{div}\,}}}
\newcommand{\Div}{{\rm div}\,}
\newcommand{\pol}{\EuScript{P}}
\newcommand{\bld}[1]{\boldsymbol{#1}}
\newcommand{\bbR}{\mathbb{R}}
\newcommand{\La}{\Lambda}
\newcommand{\m}{\mu}
\newcommand{\s}{{ z}}
\newcommand{\range}{{\rm range}\,}
\newcommand{\bbV}{\mathbb{V}}
\newcommand{\bbE}{\mathbb{E}}
\newcommand{\bbF}{\mathbb{F}}
\newcommand{\bbT}{\mathbb{T}}
\newcommand{\Mrc}{M_{c,r-1}^1}
\newcommand{\Mrco}{\mathring{M}_{c,r-1}^1}
\newcommand{\Vrc}{V_{c,r-2}^2}
\newcommand{\Vrco}{\mathring{V}_{c,r-2}^2}
\title{Exact smooth piecewise polynomial sequences on Alfeld splits}
\author[G.~Fu, J.~Guzm\'an, and M.~Neilan]{Guosheng Fu, Johnny Guzm\'an and Michael Neilan}
\begin{document}

\maketitle

\begin{abstract}
We develop exact piecewise polynomial sequences
on Alfeld splits in any spatial dimension
and any polynomial degree.
An Alfeld split of a simplex is obtained by
connecting the vertices of an $n$-simplex
with its barycenter.  We show that, on these
triangulations, the kernel of the exterior derivative
has enhanced smoothness.  Byproducts of this theory
include characterizations of discrete divergence-free subspaces
for the Stokes problem, commutative projections, and simple
formulas for the dimensions of smooth polynomial spaces.
\end{abstract}

\section{Introduction}\label{section-1}

An Alfeld split (or refinement) of an $n$-dimensional simplex
is obtained by connecting each vertex of the simplex with
its barycenter \cite{Alfeld84,Schenck,Tanya13}.  This is also known as a barycenter
refinement in some communities \cite{OlshRebholz,Case11,Linke11}.
Such meshes are useful in several areas of computational mathematics.
For example, one can construct relatively low-order $C^1$ finite elements
on Alfeld splits.   This is the idea behind the famous (cubic) Clough-Tocher finite elements in two dimensions \cite{Clough},
and the (quintic) Alfeld elements in three dimensions \cite{Alfeld84}.
This family of triangulations have also been used to develop simple low-order, inf-sup stable,
and divergence-free yielding finite elements for the Stokes and Navier-Stokes problems; 
see \cite{ArnoldQin92} for two dimensional case and  \cite{Zhang04} for the three dimensional case. 


In this paper we will show that these $C^1$ finite elements
and Stokes finite element pairs
 are connected via an exact sequence consisting of piecewise polynomial spaces.
 The sequence is a de Rham complex, but where the finite element spaces have extra smoothness as compared to the canonical Whitney-N\'ed\'elec spaces; see \cite{AFW06,Nedelec80,Nedelec86}.

As a first step to prove these results, we take a single non-degenerate $n$-dimensional simplex $T$ ($n\ge 2$),
and consider the split (mesh) $T^\s$ which is obtained by adjoining the vertices of $T$ to its barycenter $\s$. 
We study $k$-forms with piecewise polynomial coefficients on these (local) meshes,
and show that the kernel of the exterior derivative has enhanced smoothness properties.
In particular, if $\omega$ is a piecewise polynomial $k$-form on $T^\s$ with vanishing
exterior derivative, then there exists a {\em continuous} piecewise polynomial 
$(k+1)$-form $\rho$ such that $\omega = d\rho$ (cf.~Theorems \ref{mainthm}--\ref{cor1}).
The case $k=n-1$ has been recently established in \cite{GuzmanNeilan18}.
This result allows us to develop $n$ new (local) de Rham complexes
consisting of piecewise polynomials.  In addition, a simple byproduct of this result
is a dimension formula of the space
of continuous, piecewise polynomial $k$-forms with continuous exterior derivative.
For example, we are able to recover the dimension of local $C^1$ spaces on Alfeld splits
by taking $k=0$ in this framework \cite{Tanya13}.  Another instance ($k=1$, $n=3$) is the dimension
of the space consisting of continuous piecewise polynomial vector-fields whose curl is continuous.


We then develop unisolvent sets of
degrees of freedom for the spaces in the complexes in three dimensions.
These degrees of freedom induce 
projections onto these spaces that commute with the differential operator,
and allow us to formulate the global finite element spaces and three global
discrete complexes with varying level of regularity.  One of the sequences  connects the $C^1$ finite element
space of Alfeld \cite{Alfeld84} to the inf-sup stable Stokes pair 
of Zhang \cite{Zhang04} globally.  This is done by introducing
a new $H^1({\rm curl})$-conforming finite element space that may
be useful for fourth order curl problems \cite{ZhengXu11}.
Finally, we show that the complexes are exact on contractible domains.


We  mention that Christiansen and Hu \cite{ChristiansenHu} have recently  
studied smoothed discrete de Rham sequences in any dimension. Their triangulations have different splits, and they only
considered low-order polynomial approximations in higher dimensions.

The paper is organized as follows: In Section \ref{section-2} we give preliminary results on differential forms on 
one simplex. In Section \ref{section-3} we  define finite element spaces on an Alfeld split of a single simplex. 
Important surjectivity properties of the exterior derivative are established. In Section \ref{sec-Smooth3D} we focus 
on the three dimensional case. We provide degrees of freedom of several finite element spaces that induce projections that satisfy commuting diagrams.  
In Section \ref{section-Global} we define the corresponding global finite element complexes. We show exactness properties on contractible domains.
Finally in Section \ref{section-Conclusion} we summarize our results and state possible future directions.

\section{Polynomial differential forms on a simplex}\label{section-2}
Let $T=[x_0, \ldots, x_n]$ be an $n$-simplex  with vertices $\{x_i\}_{i=0}^n$.
We denote by $\Delta_s(T)$ the set of $s$-simplices
of $T$.  We note that the cardinality of $\Delta_s(T)$ is $\binom{n+1}{s+1}$.
We let $t_i=x_i-x_0$ for $1 \le i \le n$ and assume that the determinant of the matrix $[t_1, \ldots, t_n]$ is positive. We let $\la_i$ for $0 \le i \le n$ be the barycentric coordinates for $T$,  that is, $\la_i$ is the  unique linear function such that $\la_i(x_j)=\delta_{ij}$,  $0 \le i, j \le n$. We  denote by $F_i$ the face of $T$ opposite to $x_i$, that is 
$F_i=[x_0, \ldots, \hat{x}_i, \ldots, x_n]$, where $\hat{\cdot}$ represents omission. Note that $\lambda_i$ vanishes on $F_i$. The differential $d \la_i: \mathbb{R}^n \rightarrow \mathbb{R}$ is given by 
$d \la_i(r)= \grad \la_i \cdot r$.  For integer $k\in [1,n]$,
and $0 \le \sigma(1)<\sigma(2)< \dots< \sigma(k) \le n$, we define 
the $k$-form $d\la_{\sigma(1)} \we d\la_{\sigma(2)} \we \cdots \we d \la_{\sigma(k)}$ as follows:
\begin{equation*}
(d\la_{\sigma(1)} \we d\la_{\sigma(2)} \we \cdots \we d \la_{\sigma(k)})(v_1, v_2, \ldots, v_k):= \det [d\la_{\sigma(i)}(v_j))],
\end{equation*}
where $v_1, \ldots, v_k \in \R^n$.

We define the space of polynomials on $T$ with respect to the barycentric coordinates:
\begin{equation*}
\pol_r(T):=\{ \sum_{|\alpha|\le r} a_{\alpha} \la_1^{\alpha_1} \cdots \la_n^{\alpha_n}: a_\alpha \in \R \},
\end{equation*}
and we use the convention $\pol_r(T) = \{0\}$ if $r$ is negative.
If $f=[x_{\tau(0)}, x_{\tau(1)}, \ldots, x_{\tau(s)}]\in \Delta_s(T)$ is a $s$ sub-simplex of $T$ where $\tau:\{0, 1, \ldots, s\} \rightarrow \{0,1, \ldots, n\}$ is increasing, then we define 
\begin{equation*}
\pol_r(f):=\{ \sum_{|\alpha|\le r} a_{\alpha} \la_{\tau(1)}^{\alpha_1} \cdots \la_{\tau(s)}^{\alpha_s}: a_\alpha \in \R \}.
\end{equation*}
Using the short-hand notation $d \la_{\sigma}=d\la_{\sigma(1)} \we d\la_{\sigma(2)} \we \cdots \we d \la_{\sigma(k)}$, 
we define the space of $k$-forms with polynomial coefficients on $T$ as follows:
\begin{equation*}
\pol_r \La^k(T):= \{ \sum_{\sigma\in \Sigma(k,n)}
a_{\sigma} d\la_{\sigma}:  a_{\sigma} \in \pol_r(T)\},
\end{equation*}
where $\Sigma(k,n)$ is the set of increasing maps $\{1,2,\ldots,n\}\to \{1,2,\ldots,k\}$.
If $f\in \Delta_s(T)$ with $s\ge k$, then
\begin{equation*}
\pol_r \La^k(f):= \{ \sum_{\sigma\in \Sigma(k,s)} a_{\sigma} d\la_{\tau \circ \sigma}:  a_{\sigma} \in \pol_r(f)\},
\end{equation*}
where here we used the notation $\tau \circ \sigma=\{\tau (\sigma(1)) , \tau(\sigma(2)), \ldots, \tau(\sigma(k))\}$.

For a polynomial $a \in \pol_r(T)$,  we see that the $1$-form $da$ is given as 
\begin{equation*}
da_{(\la_1, \ldots, \la_n)}= \sum_{j=1}^n \frac{\partial a}{\partial \la_j}(\la_1, \ldots, \la_n) d\la_j. 
\end{equation*}
If $\omega=\sum_{\sigma} a_{\sigma} d\la_{\sigma}  \in \pol_r \La^k(T)$ then
\begin{equation*}
d\omega=\sum_{\sigma} d a_{\sigma} \we d\la_{\sigma},
\end{equation*}
and therefore $d \omega \in  \pol_{r-1} \La^{k+1}(T)$.

The Koszul operator can be defined using barycentric coordinates:
\begin{equation*}
\kappa \omega=\sum_{\sigma} \sum_{i=1}^k (-1)^{i+1} a_{\sigma} \la_{\sigma(i)}  d\la_{\sigma(1)} \we \cdots \we \widehat{d \la_{\sigma(i)}} \we \cdots \we d \la_{\sigma(k)}.
\end{equation*}
Hence, $\kappa \omega \in \pol_{r+1} \La^{k-1}(T).$ 

Suppose that $\omega=\sum_{\sigma} a_{\sigma} d\la_{\sigma}  \in \pol_r \La^k(T)$ and suppose 
that $f=[\tau(0), \tau(1), \tau(2), \ldots, \tau(s)]$ is an $s$-simplex of $T$.  Then the trace 
of $\omega$ on $f$ is given by
%
\begin{equation*}
\tr_f \omega= \sum_{\sigma \subset \tau} \tr_f a_{\sigma} d\la_{\sigma}\in \pol_r \La^k(f),
\end{equation*}
where $\tr_f a_{\sigma}:=a_{\sigma}|_f$ is simply the restriction of $a_{\sigma}$ to $f$. We say that $\sigma \subset \tau$ if 
$\{\sigma(1), \ldots, \sigma(k)\} \subset \{\tau(0), \tau(1), \ldots, \tau(s)\}$.

We define the space
\begin{equation*}
\pol_{r}^- \La^k(f)= \pol_{r-1} \La^k(f)+ \kappa \pol_{r+1} \La^{k+1}(f). 
\end{equation*}

The following result is contained in \cite[Theorem 4.8]{AFW06}.
\begin{proposition}\label{Prop2}
Let  $\omega \in  \pol_r \La^k(T)$. Then if $r \ge 1$, $\omega$ is uniquely determined by
\begin{equation*}
\int_f \tr_f \omega  \we \eta \quad \text{ for all } \eta \in \pol_{r+k-s}^{-}\La^{s-k}(f),\ f \in \Delta_s(T),\ s \ge k.
\end{equation*}
\end{proposition}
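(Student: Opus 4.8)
The plan is to treat this as a standard finite-element unisolvence statement. Since $\pol_r\La^k(T)$ is finite dimensional and we are handed a family of linear functionals on it, the assertion that ``$\omega$ is uniquely determined by'' the listed integrals is exactly the injectivity of the evaluation map $\omega\mapsto(\text{functional values})$. I would package this in the usual way: establish (i) that the total number of functionals equals $\dim\pol_r\La^k(T)$, so that the functionals form a genuine, non-redundant set of degrees of freedom, and (ii) that the only $\omega\in\pol_r\La^k(T)$ annihilated by every functional is $\omega=0$. The logical core is (ii), and that is where I expect to spend almost all of the effort.

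For (i), I would simply count. The number of functionals is $\sum_{s=k}^{n}\binom{n+1}{s+1}\,\dim\pol_{r+k-s}^-\La^{s-k}(f)$, where $f$ denotes a generic $s$-simplex and $\binom{n+1}{s+1}=\card\Delta_s(T)$. Inserting the known dimension formula for the $\pol^-$ spaces and collapsing the sum with a Vandermonde-type binomial identity should return $\binom{n}{k}\binom{n+r}{n}=\dim\pol_r\La^k(T)$. This is purely combinatorial, and I anticipate no real difficulty.

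For (ii) I would argue by induction on the face dimension $s$, proving $\tr_f\omega=0$ for every $f\in\Delta_s(T)$ as $s$ runs from $k$ up to $n$; the case $s=n$ yields $\omega=0$. Fix $f\in\Delta_s(T)$ and assume inductively that $\tr_g\omega=0$ for all faces $g$ of dimension $<s$. Then $\tr_f\omega\in\pol_r\La^k(f)$ has vanishing trace on $\partial f$, i.e. $\tr_f\omega\in\mathring{\pol_r\La^k}(f)$, while the functionals attached to $f$ assert $\int_f\tr_f\omega\we\eta=0$ for every $\eta\in\pol_{r+k-s}^-\La^{s-k}(f)$. Thus each inductive step reduces to a single clean statement, applied with $f$ in the role of the ambient simplex: \emph{the wedge pairing $(\mu,\eta)\mapsto\int_f\mu\we\eta$ is nondegenerate on $\mathring{\pol_r\La^k}(f)\times\pol_{r+k-s}^-\La^{s-k}(f)$.}

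This pairing is the heart of the matter and the step I expect to be the main obstacle. I would first check that the two spaces have equal dimension (the same count as in (i), restricted to a single simplex), so that it suffices to prove one-sided nondegeneracy: if $\mu\in\mathring{\pol_r\La^k}(f)$ pairs to zero against all of $\pol_{r+k-s}^-\La^{s-k}(f)$, then $\mu=0$. To exploit this I would feed in test forms of the special shape $\eta=d\zeta$, which lie in $\pol^-$, and integrate by parts via Stokes' theorem; the boundary contribution drops out precisely because $\tr_{\partial f}\mu=0$, leaving $\int_f d\mu\we\zeta=0$ for a full polynomial space of $\zeta$. Pairing this with the complementary Koszul test forms $\eta\in\kappa(\cdots)$ and invoking the homotopy identity $(d\kappa+\kappa d)\mu=(p+k)\mu$ for forms $\mu$ homogeneous of polynomial degree $p$ should let me reconstruct $\mu$ from the vanishing of both families and force $\mu=0$. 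Making this reconstruction precise — in effect, identifying the pairing with the duality between $\pol_r\La^k$ with boundary conditions and $\pol^-_{r+k-s}\La^{s-k}$ that underlies the finite element exterior calculus — is the delicate part; everything else is bookkeeping.
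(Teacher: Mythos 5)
The paper never proves Proposition \ref{Prop2} at all: it is quoted verbatim from \cite{AFW06} (Theorem 4.8 there), so your attempt has to be measured against that cited proof. Your skeleton agrees with it: a dimension count, an induction over faces $f\in\Delta_s(T)$ of increasing dimension $s$, and a reduction of each inductive step to the claim that $\mu:=\tr_f\omega\in\pol_r\La^k(f)$ with $\tr_{\partial f}\mu=0$ and $\int_f\mu\we\eta=0$ for all $\eta\in\pol^-_{t}\La^{s-k}(f)$, $t=r+k-s$, must vanish. Your Stokes step inside that claim is also sound: taking $\eta=d\zeta$ with $\zeta\in\pol_{t}\La^{s-k-1}(f)$ (so that $d\zeta\in\pol_{t-1}\La^{s-k}(f)\subset\pol^-_{t}\La^{s-k}(f)$) and using $\tr_{\partial f}\mu=0$ yields $\int_f d\mu\we\zeta=0$ for all such $\zeta$. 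The gap is everything after that: ``reconstruct $\mu$ \dots via the homotopy identity'' is not an argument, and the mechanism you propose cannot close.

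Here is the concrete failure. In your plan the boundary conditions enter only through killing boundary terms in Stokes' theorem. To invoke full moments of $\mu$ you must control $\int_f\mu\we d\zeta$ for $\zeta\in\pol_{t+1}\La^{s-k-1}(f)$ (because $\pol_{t}\La^{s-k}=\pol^-_{t}\La^{s-k}+d\pol_{t+1}\La^{s-k-1}$), and Stokes converts this into moments of $d\mu$ against forms of degree $t+1$ --- one degree more than your derived family provides. Trying to recover that degree with $(d\kappa+\kappa d)\eta_p=(p+s-k)\eta_p$ sends you in a circle: for the top homogeneous component the two substitutions (Stokes, then the homotopy identity) return the tautology $\int_f\mu\we\eta_t=\int_f\mu\we\eta_t$. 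The missing ingredient is exactly the algebraic lemma the paper records as Proposition \ref{Prop1} (AFW's Lemma 4.6), whose proof uses the vanishing traces to extract divisibility of the barycentric coefficients by the bubble $\prod\la_i$ and then tests against an explicit complementary form; no amount of integration by parts substitutes for it. With Proposition \ref{Prop1} in hand, your argument closes in two strokes: (i) $d\mu\in\pol_{r-1}\La^{k+1}(f)$ has vanishing traces on $\partial f$ and, by your Stokes step, vanishing moments against the full space $\pol_{t}\La^{s-k-1}(f)$, which is precisely its Proposition \ref{Prop1} test space, hence $d\mu=0$; (ii) then $\int_f\mu\we d\zeta=\pm\int_f d\mu\we\zeta=0$ for every $\zeta$, so $\mu$ has vanishing moments against all of $\pol_{t}\La^{s-k}(f)$, and Proposition \ref{Prop1} applied to $\mu$ gives $\mu=0$. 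Your instinct about the Koszul operator is right, but its legitimate role is the decomposition $\pol_{t}\La^{s-k}=\pol^-_{t}\La^{s-k}+d\pol_{t+1}\La^{s-k-1}$, not the reconstruction of $\mu$.
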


We also need a result in the case $r=0$. To do this, we first state a result from Arnold et al. \cite[Lemma 4.6]{AFW06}.
\begin{proposition}\label{Prop1}
Let $\omega \in \pol_r \La^k(T)$. Suppose that $\tr_{F_i} \omega=0$, for $1 \le i \le n$ and 
\begin{equation*}
\int_T \omega \we \eta, \quad \text{ for all } \eta \in  \pol_{r-n+k} \La^{n-k}(T).
\end{equation*}
Then, $\omega =0$.  In particular, if $\omega\in \pol_0\La^k(T)$ with $k\le n-1$ satisfies
 $\tr_{F_i} \omega=0$ for $1 \le i \le n$, then $\omega=0$.
\end{proposition}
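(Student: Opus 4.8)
The plan is to expand $\omega$ in the barycentric basis, convert the facet trace conditions into divisibility statements for the coefficients, and then use the integral conditions to remove the remaining freedom via a weighted $L^2$ positivity argument. Write $\omega=\sum_{\sigma\in\Sigma(k,n)}a_\sigma\,d\la_\sigma$ with $a_\sigma\in\pol_r(T)$, and regard each $a_\sigma$ as an honest polynomial in the affinely independent coordinate functions $\la_1,\dots,\la_n$ (placing $x_0$ at the origin). For a multi-index $\sigma$ set $\sigma^c=\{1,\dots,n\}\setminus\{\sigma(1),\dots,\sigma(k)\}$ and $\la_{\sigma^c}=\prod_{i\in\sigma^c}\la_i$, a polynomial of degree $n-k$.

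First I would extract divisibility from the trace hypotheses. Fix $i\in\{1,\dots,n\}$; on the facet $F_i$ one has $\la_i\equiv0$, so the pullback of $d\la_i$ vanishes and every term of $\omega$ whose index set contains $i$ drops out of $\tr_{F_i}\omega$, while the surviving coefficients are restricted to $\la_i=0$. Hence $\tr_{F_i}\omega=\sum_{\sigma:\,i\notin\sigma}(a_\sigma|_{\la_i=0})\,\tr_{F_i}(d\la_\sigma)$. The crucial sub-step is that the restricted constant-coefficient forms $\{\tr_{F_i}(d\la_\sigma):i\notin\sigma\}$ are linearly independent on $F_i$: indeed the $1$-forms $\{d\la_j|_{F_i}:j\ne i,\ 1\le j\le n\}$ are independent on the $(n-1)$-dimensional $F_i$, since any $1$-form annihilating the tangent space of $F_i$ is a multiple of $d\la_i$ and $d\la_1,\dots,d\la_n$ are independent. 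Evaluating $\tr_{F_i}\omega=0$ pointwise then forces $a_\sigma|_{\la_i=0}=0$, i.e.\ $\la_i\mid a_\sigma$, for each $\sigma$ with $i\notin\sigma$. Running over all $i\in\{1,\dots,n\}$ gives $\la_{\sigma^c}\mid a_\sigma$, so I may write $a_\sigma=\la_{\sigma^c}\,b_\sigma$ with $b_\sigma\in\pol_{r-n+k}(T)$.

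For the ``In particular'' claim this already concludes the proof: when $r=0$ and $k\le n-1$ the set $\sigma^c$ is nonempty, so $b_\sigma\in\pol_{k-n}(T)=\{0\}$ by the degree convention, whence every $a_\sigma=0$ and $\omega=0$. For the general statement I would eliminate the $b_\sigma$ using the integral conditions $\int_T\omega\we\eta=0$: test against $\eta=c\,d\la_{\sigma_0^c}\in\pol_{r-n+k}\La^{n-k}(T)$ for a fixed $\sigma_0$ and arbitrary $c\in\pol_{r-n+k}(T)$. Since $d\la_\sigma\we d\la_{\sigma_0^c}$ vanishes unless $\sigma=\sigma_0$ (in which case it equals $\pm\,d\la_1\we\cdots\we d\la_n$, a nonzero constant multiple of the volume form), the wedge collapses to a single term and the hypothesis reduces to $\int_T\la_{\sigma_0^c}\,b_{\sigma_0}\,c\,dx=0$ for all admissible $c$. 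Because $\la_{\sigma_0^c}>0$ in the interior of $T$, the pairing $(f,g)\mapsto\int_T\la_{\sigma_0^c}fg\,dx$ is an inner product on polynomials; taking $c=b_{\sigma_0}$ gives $\int_T\la_{\sigma_0^c}b_{\sigma_0}^2\,dx=0$ and hence $b_{\sigma_0}=0$. As $\sigma_0$ was arbitrary, every $a_\sigma=0$ and $\omega=0$.

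I expect the main obstacle to be the first step: the careful bookkeeping that turns vanishing facet traces into divisibility of $a_\sigma$ by $\la_{\sigma^c}$. The linear independence of the restricted differentials on $F_i$ is what licenses the coefficient-by-coefficient conclusion, and it also explains why traces are needed only on $F_1,\dots,F_n$ and never on $F_0$, since the absent coordinate $\la_0$ never appears in any $\la_{\sigma^c}$. Once the factorization $a_\sigma=\la_{\sigma^c}b_\sigma$ is secured, the concluding positivity argument is routine.
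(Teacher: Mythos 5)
Your proof is correct. There is, however, nothing in the paper to compare it against: the paper does not prove this proposition but simply quotes it from \cite[Lemma 4.6]{AFW06}. Your argument is essentially the standard proof from that reference: expand $\omega=\sum_{\sigma}a_\sigma d\la_\sigma$ in the barycentric basis, observe that $\tr_{F_i}d\la_i=0$ while the remaining restricted forms $\{\tr_{F_i}(d\la_\sigma):i\notin\sigma\}$ stay linearly independent (so each vanishing trace forces $\la_i\mid a_\sigma$ for $i\notin\sigma$, hence $a_\sigma=\la_{\sigma^c}b_\sigma$), and then eliminate each $b_{\sigma_0}$ by testing the integral condition with $\eta=b_{\sigma_0}\,d\la_{\sigma_0^c}$, which collapses the wedge to a single term and yields $\int_T\la_{\sigma_0^c}b_{\sigma_0}^2\,dx=0$, forcing $b_{\sigma_0}=0$ by positivity of $\la_{\sigma_0^c}$ in the interior of $T$. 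All the supporting steps you flag as delicate are sound: the independence of $\{d\la_j|_{F_i}\}_{j\ne i}$ follows exactly as you say from the annihilator of the tangent space of $F_i$ being spanned by $d\la_i$; vanishing of $a_\sigma$ on $F_i$ (a set with nonempty relative interior in the hyperplane $\la_i=0$) does give $\la_i\mid a_\sigma$; and coprimality of the distinct linear factors justifies dividing by the product. Your treatment of the $r=0$ case via the convention $\pol_{k-n}(T)=\{0\}$ and your observation that $F_0$ is never needed (since $\la_0$ appears in no $\la_{\sigma^c}$) are also correct, and the argument covers $k=n$ as well, where the trace hypotheses are vacuous and the weight is the empty product.
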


\begin{lemma}\label{lemma101}
Define the set of $k$-simplices that have $x_0$ as a vertex:
\[
S_k(T,x_0) :=\{f\in \Delta_k(T):\ x_0\in \Delta_0(f)\}.
\]
Then any $\omega\in \pol_0\Lambda^k(T)$ is uniquely determined by
\begin{equation}\label{eqn:101DOFs}
\int_f \tr_f \omega  \qquad  \text{for all } f \in S_k(T,x_0).
\end{equation}
\end{lemma}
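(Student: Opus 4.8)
The plan is to combine a dimension count with a direct verification that the proposed functionals decouple into a diagonal system. First I would observe that a $k$-simplex containing $x_0$ must have the form $f=[x_0,x_{i_1},\dots,x_{i_k}]$ with $1\le i_1<\cdots<i_k\le n$, so $\card S_k(T,x_0)=\binom{n}{k}=\card\Sigma(k,n)=\Dim\pol_0\Lambda^k(T)$. Since the number of functionals in \eqref{eqn:101DOFs} equals the dimension of the space, it suffices to show that the map $\omega\mapsto\bigl(\int_f\tr_f\omega\bigr)_{f\in S_k(T,x_0)}$ is injective; that is, I would assume $\int_f\tr_f\omega=0$ for every $f\in S_k(T,x_0)$ and deduce $\omega=0$.

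Write $\omega=\sum_{\sigma\in\Sigma(k,n)}a_\sigma\,d\la_\sigma$ with $a_\sigma\in\R$, and fix $f=[x_0,x_{i_1},\dots,x_{i_k}]\in S_k(T,x_0)$ with associated index $\mu=(i_1,\dots,i_k)\in\Sigma(k,n)$. The \emph{key observation} is that $\la_j$ vanishes on $f$ whenever $j\notin\{0,i_1,\dots,i_k\}$, so $\tr_f d\la_j=d(\tr_f\la_j)=0$ for such $j$. Because each $\sigma\in\Sigma(k,n)$ uses only indices in $\{1,\dots,n\}$, the trace $\tr_f d\la_\sigma$ can be nonzero only when $\{\sigma(1),\dots,\sigma(k)\}\subset\{i_1,\dots,i_k\}$, which by cardinality forces $\sigma=\mu$. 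Using that the coefficients are constant (so $\tr_f a_\sigma=a_\sigma$), this gives $\tr_f\omega=a_\mu\,\tr_f d\la_\mu$: the functional attached to $f$ sees exactly the single coefficient $a_\mu$.

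It then remains to check $\int_f\tr_f d\la_\mu\neq0$, so that the hypothesis yields $a_\mu=0$. I would compute this by evaluating the constant form on the edge vectors spanning $f$: since $d\la_{i_p}(x_{i_q}-x_0)=\la_{i_p}(x_{i_q})-\la_{i_p}(x_0)=\delta_{pq}$ for $1\le p,q\le k$, we get $d\la_\mu(x_{i_1}-x_0,\dots,x_{i_k}-x_0)=\det[\delta_{pq}]=1$. Parametrizing $f$ by the affine map from the standard $k$-simplex that sends the coordinate directions to $x_{i_q}-x_0$ then gives $\int_f\tr_f d\la_\mu=1/k!\neq0$. Running this over all $f\in S_k(T,x_0)$, and noting that $f\mapsto\mu$ is a bijection onto $\Sigma(k,n)$, forces $a_\sigma=0$ for every $\sigma$, hence $\omega=0$.

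I expect no genuine obstacle here, only careful bookkeeping: the one point requiring attention is that the indices of $d\la_\sigma$ range over $\{1,\dots,n\}$ rather than $\{0,\dots,n\}$, which is precisely what collapses the subset condition $\{\sigma\}\subset\{i_1,\dots,i_k\}$ to an equality and makes the degree-of-freedom matrix diagonal with nonzero diagonal entries. The nonvanishing of the integral is the sole computational step, and it is immediate once $d\la_\mu$ is tested against the natural edge basis of $f$.
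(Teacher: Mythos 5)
Your proof is correct, but it takes a genuinely different route from the paper. The paper's argument is inductive: it first notes that vanishing of $\int_f \tr_f\omega$ for a constant form on a $k$-simplex forces $\tr_f\omega=0$, and then climbs the dimension ladder using Proposition \ref{Prop1} (for each $f\in S_{k+1}(T,x_0)$, $k+1$ of its $k+2$ facets lie in $S_k(T,x_0)$, so Proposition \ref{Prop1} gives $\tr_f\omega=0$; iterating up to $S_{n-1}(T,x_0)$ and applying Proposition \ref{Prop1} once more on $T$ yields $\omega=0$). Your proof instead diagonalizes the degree-of-freedom matrix explicitly: writing $\omega=\sum_{\sigma}a_\sigma\,d\la_\sigma$, you observe that for $f=[x_0,x_{i_1},\ldots,x_{i_k}]$ only the single basis form with $\sigma=(i_1,\ldots,i_k)$ has nonvanishing trace on $f$ (since every $\sigma$ avoids the index $0$, the subset condition collapses to equality), and then you compute $\int_f \tr_f d\la_\sigma=\pm 1/k!\neq 0$ by testing against the edge vectors $x_{i_q}-x_0$. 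Both arguments are sound; yours is more elementary and self-contained (it avoids Proposition \ref{Prop1} and the induction entirely, handles $k=n$ uniformly rather than as a separate case, and as a bonus exhibits the explicit dual basis, namely that the functionals are biorthogonal to $\{k!\,d\la_\sigma\}$ up to sign), while the paper's argument reuses machinery already in place and mirrors the trace-propagation technique employed elsewhere in the paper (e.g., in the proof of Theorem \ref{cor1}). The only point requiring care in your version, which you handle correctly, is that the trace commutes with both $d$ and the wedge product, so that $\la_j|_f\equiv 0$ indeed kills every term containing $d\la_j$.
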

\begin{proof}
We have that $\dim \pol_0 \La^k(T)= {n \choose k}$ which is exactly the 
cardinality of $S_k(T,x_0)$.  Thus to prove the result,
we show that if $\omega\in \pol_0\Lambda^k(T)$ vanishes on \eqref{eqn:101DOFs}, then $\omega =0$.
The result is clearly true if $k=n$ by Proposition \ref{Prop1}, and so we assume that $k\le n-1$.

Suppose that $\omega\in \pol_0\Lambda^k(T)$ vanishes on \eqref{eqn:101DOFs},
so that $\tr_f \omega=0$ for $f\in S_k(T,x_0)$. 
For any $f\in S_{k+1}(T,x_0)$ it is easy to see that the cardinalities of the sets $\Delta_k(f)$ and $\Delta_k(f)\cap S_k(x_0,T)$ are $(k+2)$
 and $(k+1)$, respectively.  Therefore, using Proposition \ref{Prop1}  we have
 $\tr_f \omega=0$ for all $f \in S_{k+1}(T, x_0)$. We  continue by induction to conclude that $\tr_f \omega=0$ for any  $f \in S_{n-1}(T, x_0)$.
 Finally we apply Proposition \ref{Prop1} once more to get $\omega=0$.
 \end{proof}

We will also need the following two lemmas.
\begin{lemma}\label{lemma1}
Suppose that $\omega\in \pol_r\Lambda^k(T)$ satisfies
$\tr_{F_i} \omega=0$ for some $i\in \{0,1,\ldots,n\}$.  Then,
\begin{equation*}
\omega= d \lambda_i \we v+ \lambda_i  w,
\end{equation*}
where $v \in \pol_{r} \La^{k-1}(T)$ and $w \in  \pol_{r-1} \La^{k}(T)$.
\end{lemma}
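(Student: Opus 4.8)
The plan is to isolate a factor of $d\lambda_i$ from the part of $\omega$ that ``contains'' $d\lambda_i$, and a factor of $\lambda_i$ from the remaining part, using the hypothesis $\tr_{F_i}\omega=0$. The first thing I would do is dispose of the apparent asymmetry in the index $i$: although the coefficient polynomials and basis forms in the definition of $\pol_r\Lambda^k(T)$ are written using $\lambda_1,\ldots,\lambda_n$ (so that $\lambda_0$ and $d\lambda_0$ are not directly available as a coordinate and a basis one-form), the space $\pol_r\Lambda^k(T)$ is intrinsic. Hence I am free to fix any index $m\ne i$ and represent elements of $\pol_r\Lambda^k(T)$ in the basis $\{d\lambda_\sigma\}$ with $\sigma\subset\{0,1,\ldots,n\}\setminus\{m\}$, using $\{\lambda_j\}_{j\ne m}$ as independent coordinates. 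Since $n\ge 2$ such an $m$ always exists, and this choice guarantees that $\lambda_i$ is a coordinate and $d\lambda_i$ is a basis one-form, placing the case $i=0$ on the same footing as $i\ne 0$.

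With $\lambda_i$ a coordinate, I would collect terms according to whether the multi-index contains $i$, writing
\begin{equation*}
\omega = d\lambda_i \we v + \beta,
\end{equation*}
where $v\in\pol_r\Lambda^{k-1}(T)$ comes from the terms containing $d\lambda_i$, and $\beta=\sum_\sigma b_\sigma\,d\lambda_\sigma\in\pol_r\Lambda^k(T)$ collects the terms with $i\notin\sigma$ (and $m\notin\sigma$ automatically, by the choice of basis), i.e.\ $\sigma\subset\{0,\ldots,n\}\setminus\{i,m\}$. Because $\lambda_i\equiv 0$ on $F_i$, we have $\tr_{F_i} d\lambda_i=0$, so taking traces gives $\tr_{F_i}\omega=\tr_{F_i}\beta$, and the hypothesis forces $\tr_{F_i}\beta=0$.

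The remaining step, which I expect to be the main (if routine) obstacle, is to conclude that each coefficient $b_\sigma$ is divisible by $\lambda_i$. For this I would observe that $F_i$ is an $(n-1)$-simplex whose barycentric coordinates may be taken to be $\{\lambda_j\}_{j\ne i,m}$, so that the one-forms $\{d\lambda_j|_{F_i}\}_{j\ne i,m}$ are genuine coordinate differentials on $F_i$ and the wedges $\{d\lambda_\sigma|_{F_i}:\sigma\subset\{0,\ldots,n\}\setminus\{i,m\}\}$ are linearly independent in $\Lambda^k(F_i)$. Since $\beta$ involves exactly these forms, the identity $\tr_{F_i}\beta=\sum_\sigma (b_\sigma|_{F_i})\,d\lambda_\sigma|_{F_i}=0$ forces $b_\sigma|_{F_i}=0$ for every $\sigma$. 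As $\lambda_i$ is the linear polynomial whose zero set is the hyperplane containing $F_i$, vanishing on $F_i$ is equivalent to divisibility by $\lambda_i$; thus $b_\sigma=\lambda_i c_\sigma$ with $c_\sigma\in\pol_{r-1}(T)$, and setting $w=\sum_\sigma c_\sigma\,d\lambda_\sigma\in\pol_{r-1}\Lambda^k(T)$ gives $\beta=\lambda_i w$ and hence $\omega=d\lambda_i\we v+\lambda_i w$, as claimed. I would finally note that the degree bookkeeping is automatic ($v$ inherits degree $\le r$, and division by $\lambda_i$ lowers the degree of $w$ to $\le r-1$), so that the edge case $r=0$, where $\pol_{r-1}\Lambda^k(T)=\{0\}$ and necessarily $\beta=0$, requires no separate treatment.
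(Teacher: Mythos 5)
Your proposal is correct and follows essentially the same route as the paper's proof: split $\omega$ into the part containing $d\lambda_i$ and the remainder, use $\tr_{F_i}\omega=0$ together with linear independence of the restricted basis forms to conclude the remaining coefficients vanish on $F_i$, and then factor out $\lambda_i$ by polynomial divisibility. The only cosmetic difference is that the paper handles the index asymmetry by relabeling (``without loss of generality $i=n$'') where you instead choose an omitted index $m\ne i$; your treatment of this point, and of the case $r=0$, is if anything slightly more explicit.
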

\begin{proof}
Without loss of generality we assume that $i=n$.

We note that we can write $\omega$ is the following form:
\begin{equation*}
\omega = \sum_{\sigma \in \Sigma(k,n)} a_\sigma d\la_{\sigma} = \sum_{\sigma\in \Sigma(k,n-1)} a_\sigma d\lambda_\sigma
+ \mathop{\sum_{\sigma\in \Sigma(k,n)}}_{\sigma(k)=n} a_\sigma d\lambda_{\sigma(1)} \wedge \cdots \wedge d\la_{\sigma(k-1)} \wedge  d\lambda_n
\end{equation*}
with $a_\sigma\in \pol_r(T)$.
We then have $0 = \tr_{F_n} \omega = \sum_{\sigma\in \Sigma(k,n-1)} (\tr_{F_n} a_\sigma)  d\lambda_\sigma$,
and so $\tr_{F_n} a_\sigma=0$ for all $\sigma\in \Sigma(k,n-1)$.  Therefore $a_\sigma = \lambda_n b_\sigma$
for some $b_\sigma\in \pol_{r-1}(T)$.  The desired result now follows upon setting
\[
v = (-1)^{k-1}\mathop{\sum_{\sigma\in \Sigma(k,n)}}_{\sigma(k)=n} a_\sigma d\lambda_{\sigma(1)} \wedge \cdots \wedge d\lambda_{\sigma(k-1)},\quad
w = \sum_{\sigma\in \Sigma(k,n-1)} b_\sigma d\lambda_\sigma.
\]
\end{proof}

\begin{lemma}\label{zerotrace}
Suppose that $\omega=d\la_i \we v$ with $v \in \pol_r \La^{k-1}(T)$ for some $i\in \{0,1,\ldots,n\}$.  Then if $\tr_{F_i} v=0$,
there holds $\omega|_{F_i}=
0$, and so  $\omega=\lambda_i w$ for some $w \in \pol_{r-1} \La^k(T)$. Conversely, if  $\omega|_{F_i}=
0$ then we have  $\tr_{F_i} v=0$. 
\end{lemma}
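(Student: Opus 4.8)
The plan is to reduce to the case $i=n$ by relabeling the vertices, exactly as in the proof of Lemma \ref{lemma1}, and then to track coefficients through the wedge with $d\la_n$. Writing $v=\sum_{\sigma\in\Sigma(k-1,n)}b_\sigma\,d\la_\sigma$ with $b_\sigma\in\pol_r(T)$, I would first compute $\omega=d\la_n\we v$. Since $d\la_n\we d\la_\sigma=0$ whenever $n\in\{\sigma(1),\ldots,\sigma(k-1)\}$ (a repeated factor), only the indices $\sigma\in\Sigma(k-1,n-1)$ survive, so
\[
\omega=\sum_{\sigma\in\Sigma(k-1,n-1)}b_\sigma\,d\la_n\we d\la_\sigma .
\]
The point is that the coefficient of the basis $k$-form $d\la_n\we d\la_\sigma$ in $\omega$ is exactly the coefficient $b_\sigma$ of $d\la_\sigma$ in $v$.

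Next I would express both quantities in the statement in terms of the $b_\sigma|_{F_n}$. Applying the trace formula to $v$ with $f=F_n=[x_0,\ldots,x_{n-1}]$ and $\tau=(0,1,\ldots,n-1)$, the condition $\sigma\subset\tau$ selects precisely $\sigma\in\Sigma(k-1,n-1)$, whence $\tr_{F_n}v=\sum_{\sigma\in\Sigma(k-1,n-1)}(b_\sigma|_{F_n})\,d\la_\sigma$. Because the traces $\tr_{F_n}d\la_\sigma$ ($\sigma\in\Sigma(k-1,n-1)$) form a basis of the $(k-1)$-forms on the $(n-1)$-simplex $F_n$, this vanishes if and only if $b_\sigma|_{F_n}=0$ for every such $\sigma$. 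On the other hand, $\omega|_{F_n}$ denotes the restriction of the coefficients of $\omega$ to $F_n$ with the ambient differentials retained, so $\omega|_{F_n}=\sum_{\sigma\in\Sigma(k-1,n-1)}(b_\sigma|_{F_n})\,d\la_n\we d\la_\sigma$; since the $d\la_n\we d\la_\sigma$ are linearly independent, $\omega|_{F_n}=0$ if and only if $b_\sigma|_{F_n}=0$ for every $\sigma\in\Sigma(k-1,n-1)$. Thus the hypothesis $\tr_{F_n}v=0$ and the conclusion $\omega|_{F_n}=0$ are both equivalent to the single family of identities $b_\sigma|_{F_n}=0$, which settles the forward implication and its converse simultaneously.

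Finally, to obtain the factorization, I would use that $\la_n$ is (up to scaling) the linear polynomial cutting out the hyperplane containing $F_n$, so each $b_\sigma|_{F_n}=0$ forces $b_\sigma=\la_n c_\sigma$ with $c_\sigma\in\pol_{r-1}(T)$. Setting $w=\sum_{\sigma\in\Sigma(k-1,n-1)}c_\sigma\,d\la_n\we d\la_\sigma\in\pol_{r-1}\La^k(T)$ then gives $\omega=\la_n w$. The argument is essentially bookkeeping; the one point demanding care is to keep the pullback $\tr_{F_n}$ (which drops the $d\la_n$ factor and retains only indices avoiding $n$) cleanly distinguished from the coefficient-restriction $\omega|_{F_n}$, and to observe that both operations read off the very same coefficients $b_\sigma$---which is exactly what makes the two conditions coincide.
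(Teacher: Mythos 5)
Your proof is correct, but it reaches the result by a different mechanism than the paper. The paper's proof is an evaluation argument: it fixes the vector basis $t_j=x_j-x_0$, uses $d\la_n(t_j)=\delta_{jn}$, and expands $(d\la_n\we v)_x(r_1,\ldots,r_k)$ for arbitrary vectors $r_i$ via the alternating-sum formula for the wedge product, reducing everything to values of $\tr_{F_n}v$ on tangent vectors; the converse direction is obtained by testing $\omega_x$ against the tuples $(t_n,t_{j_1},\ldots,t_{j_{k-1}})$. You instead expand $v$ in the constant-coefficient basis $\{d\la_\sigma\}$ and observe that $\tr_{F_n}v=0$ and $\omega|_{F_n}=0$ are both equivalent to the vanishing on $F_n$ of the same family of coefficients $b_\sigma$, $\sigma\in\Sigma(k-1,n-1)$ (the coefficients with $n\in\sigma$ being invisible to both conditions). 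This buys you three things: both implications follow at once from a single equivalence; the distinction between the pullback $\tr_{F_n}$ and the coefficient restriction $\omega|_{F_n}$ is made completely transparent; and the factorization $\omega=\la_n w$ comes out explicitly by dividing each $b_\sigma$ by $\la_n$ --- a step the paper's proof leaves implicit. What your argument needs that the paper's does not is the linear independence, with polynomial coefficients, of the constant forms $\{d\la_\sigma\}$ and $\{d\la_n\we d\la_\sigma\}$, together with the fact that $\tr_{F_n}d\la_\sigma$, $\sigma\in\Sigma(k-1,n-1)$, form a basis of forms on $F_n$; these facts are elementary and you invoke them correctly, so there is no gap.
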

\begin{proof}
Without loss of generality, we assume that $i=n$.  Let
$t_j = x_j-x_0$ for $1\le j\le n$.
Then $\{t_j\}_{j=1}^n$ forms a basis of $\R^n$,
and $\{t_j\}_{j=1}^{n-1}$ is a basis of the tangent
space of $F_n$.  We also have $d\la_n (t_n)=1$
and $d\la_n(t_j)=0$ for $1\le j\le n-1$.

Suppose that $\omega = d\lambda_n\wedge v$ with $v\in \pol_r\Lambda^{k-1}(T)$ and $\tr_{F_n} v=0$.
For  vectors $r_i\in \R^n\ (1\le i\le k)$, we can write $r_i = \sum_{j=1}^{n} a_{ij} t_j$ where $a_{ij}\in \R$.
Then, if $x\in F_n$, we have
\begin{alignat*}{1}
\omega _x(r_1, r_2, \ldots, r_k)=&  \sum_{j_1=1}^{n} \cdots \sum_{j_k=1}^{n} a_{1 j_1} a_{2 j_2} \cdots a_{k j_k}  \omega_x(t_{j_1}, t_{j_2}   \ldots, t_{j_k}) \\
%
%
 =&   \mathop{\sum_{1 \le j_1, \ldots, j_k \le n}}_{j_m \neq j_s \text{ for } m \neq s}  a_{1 j_1} a_{2 j_2} \cdots a_{k j_k}   (d \la_n \we v)_x (t_{j_1}, t_{j_2}   \ldots, t_{j_k}) \\
 =&  \mathop{\sum_{1 \le j_1, \ldots, j_k \le n}}_{j_m \neq j_s \text{ for } m \neq s}  a_{1 j_1} a_{2 j_2} \cdots a_{k j_k}   \sum_{\ell=1}^k (-1)^{\ell-1} d \la_n (t_{j_\ell})   v_x (t_{j_1}, \ldots, \widehat{t_{j_\ell}},   \ldots, t_{j_k}) \\
  =&  \mathop{\sum_{1 \le j_1, \ldots, j_k \le n}}_{j_m \neq j_s \text{ for } m \neq s}  a_{1 j_1} a_{2 j_2} \cdots a_{k j_k}   \mathop{\sum_{\ell=1}^k}_{j_\ell=n} (-1)^{\ell-1}    \tr_{F_n} v_x (t_{j_1}, \ldots, \widehat{t_{j_\ell}},   \ldots, t_{j_k})  \\
  =& 0.
\end{alignat*}
 Since $x \in F_n$ and $r_1, \ldots, r_k \in \R^n $ where arbitrary, we conclude that $\omega|_{F_n} =0$. 

Now assume that $\omega |_{F_n}=0$. Then for any $x \in F_n$   we have 
 \begin{equation*}
 0= \omega_x(t_n, t_{j_1}, \ldots, t_{j_{k-1}})= \tr_{F_n} v_x (t_{j_1}, t_{j_2}, \ldots, t_{j_{k-1}}) 
 \end{equation*}
 for any $1 \le j_1< \ldots < j_{k-1} \le n-1$. This implies that $\tr_{F_n} v=0$.   
\end{proof}

\section{Polynomial differential forms on an Alfeld split}\label{section-3}
Here, we apply the results of the previous 
section to derive some exactness properties
of polynomial differential forms on an Alfeld split simplex.
As before, we let $T = [x_0,\ldots,x_n]$ be an $n$-simplex.
We set $z = \frac1{n+1}\sum_{i=0}^n x_n$ 
to be the barycenter of $T$,
and we subdivide $T$ into $(n+1)$ $n$-simplices
by adjoining the vertices of $T$ with  $z$.
Namely,  we set $T_i=[z, x_0, \cdots, \widehat{x_i}, \cdots, x_{n}]$ so that
$\bar T= \cup_{i=0}^{n} \bar T_i$. We set $T^\s=\{ T_0, \ldots, T_{n}\}$ to be the mesh of this sub-division. 
We denote the set of $s$-dimensional simplices in $T^\s$ as
$\Delta_s(T^\s) = \{f\in \Delta_s(T_i):\ T_i\in T^\s\}$.
The cardinality of this set is given by 
\[
\# \Delta_s(T^\s)=
\begin{cases}
\binom{n+2}{s+1} \quad \text{ for } s\le n-1 ,\\
n+1 \quad \text{ for } s=n.
\end{cases}
\]
We let $\m$ be the hat function associated with the barycenter $z$, that is, $\m$ is uniquely determined
by the conditions
$\m|_{T_i}\in \pol_1(T_i)$, $\m\in H^1_0(T)$, and $\m(z)=1$.  We denote by $\m_i$ the restriction
of $\m$ to $T_i$, and we note that $\m_i = (n+1)\lambda_i$ for $i=0,1,\ldots,n$.

We define the following local spaces:
\begin{alignat*}{1}
V_r^k(T^\s): =&\{ \omega \in L^2 \La^k(T):   \omega|_T \in \pol_r \La^k(T_i) \text{ for } 0 \le i \le n \}, \\
V_{d,r}^k(T^\s): =&\{ \omega \in V_r^k(T^\s): d \omega \in L^2 \La^{k+1} (T) \}, \\
M_r^k(T^\s) := &  \{ \omega \in C^0 \La^k(T):  \omega|_T \in \pol_r\La^k(T_i) \text{ for }  0 \le i \le n \}, \\
M_{d,r}^k(T^\s):=&  \{ \omega \in M_r^k(T^\s): d \omega \in C^0 \La^{k+1}(T) \}. 
\end{alignat*}
We also define the analogous spaces with homogenous boundary conditions for $k\le n-1$:
\begin{alignat*}{1}
\mathring{V}_{d,r}^k(T^\s) :=&\{ \omega \in V_{d,r}^k(T^\s): \tr_{F} \omega =0  \text{ for all } F \in \Delta_{n-1}(T)\}, \\
\mathring{M}_r^k(T^\s) := &  \{ \omega \in M_r^k(T^\s)  :  \omega|_{\partial T} =0\}, \\
\mathring{M}_{d,r}^k(T^\s):=&  \{ \omega \in M_{d,r}^k(T^\s): \omega|_{\partial T}=0, d\omega|_{\partial T}=0  \}.
\end{alignat*}
In the case $k=n$, we will also set average zero constraints:
\begin{alignat*}{1}
\mathring{V}_{d,r}^n(T^\s):=&\{\omega\in V_{d,r}^n(T^\s):\ \int_T \omega=0\}, \\
\mathring{M}_{d,r}^n(T^\s)=\mathring{M}_r^n(T^\s):= &\{\omega \in {M}_r^n(T^\s): \omega|_{\partial T}=0,  \int_T \omega=0\}.
\end{alignat*}

It is well known (see, for example, \cite{AFW06})
that $\tr_f \omega$ is single-valued for $\omega\in V^k_{d,r}(T^\s)$ for all 
$f\in \Delta_s(T^\s)$ and $s\ge k$.
Moreover, if $\omega \in V_{d,r}^k(T^\s)$ (resp., $\omega\in \mathring{V}_{d,r}^k(T^\s)$)
with $d \omega=0$, then there exists a $\rho  \in V_{d,r+1}^{k-1}(T^\s)$ (resp., $\rho\in  \mathring{V}_{d,r+1}^{k-1}(T^\s))$ such that $d\rho=\omega$. 
The goal of this section is to prove the following related results.

\begin{theorem}\label{mainthm}
Suppose that $\omega \in \mathring{V}_{d,r}^k(T^\s)$ and $d\omega=0$. 
Then there exists a $\rho \in \mathring{M}_{r+1}^{k-1}(T^\s)$ such that $d \rho=\omega$.
\end{theorem}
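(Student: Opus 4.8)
The plan is to first produce a potential that is only piecewise polynomial and then \emph{upgrade it to a continuous one} using the two trace lemmas. Since $\omega\in\mathring{V}_{d,r}^k(T^\s)$ is closed, the local exactness recalled just before the statement already gives a potential $\rho_0\in\mathring{V}_{d,r+1}^{k-1}(T^\s)$ with $d\rho_0=\omega$. Because $d\rho_0=\omega\in L^2\La^k(T)$, the traces $\tr_f\rho_0$ are single-valued for every $f\in\Delta_s(T^\s)$ with $s\ge k-1$; equivalently, the jump of $\rho_0$ across each interior face has \emph{vanishing tangential trace}. Thus $\rho_0$ lies in the correct polynomial space, satisfies $d\rho_0=\omega$, and has the right homogeneous boundary data, so the only defect to repair is full ($C^0$) continuity across the interior faces, i.e. the normal components of the interelement jumps.

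Next I would analyze those jumps. Fix an interior face $f$ shared by $T_i$ and $T_j$, and recall the defining feature of the Alfeld split: every interior face contains the barycenter $z$. Let $\psi:=\rho_0|_{T_i}-\rho_0|_{T_j}$, a polynomial $(k-1)$-form with $\tr_f\psi=0$, and let $\la_f$ be the affine coordinate of $T_i$ vanishing on the hyperplane of $f$. The exact analogue of Lemma~\ref{lemma1} (applied to $f$ in place of a face $F_i$) yields $\psi=d\la_f\we v+\la_f\,w$, so that $\psi|_f=(d\la_f\we v)|_f$; Lemma~\ref{zerotrace} then shows $\psi|_f=0$ \emph{precisely} when $\tr_f v=0$. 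Hence the entire continuity obstruction is encoded in the lower-degree form $v$ on each interior face, and the task reduces to modifying $\rho_0$ by $\rho_0\mapsto\rho_0+\theta$, where $\theta\in V_{r+1}^{k-1}(T^\s)$ is piecewise closed ($d\theta|_{T_i}=0$, to preserve $d\rho=\omega$) and satisfies $\theta|_{\partial T}=0$, in such a way that the associated $v$ acquires vanishing tangential trace on every interior face simultaneously.

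I would construct the correction $\theta$ by induction on the form degree $k$. The base case $k=1$ is immediate, since for $0$-forms tangential continuity is continuity, so $\mathring{V}_{d,r+1}^{0}(T^\s)=\mathring{M}_{r+1}^{0}(T^\s)$ and $\rho_0$ is already admissible. For the inductive step, each admissible piecewise-closed $\theta$ is, on the contractible piece $T_i$ (with $z$ a vertex), of the form $\theta|_{T_i}=d\xi_i$ for a polynomial $(k-2)$-form $\xi_i$, so prescribing the jumps of $\theta$ becomes a matching problem for $(k-2)$-form potentials across the interior skeleton. I would solve this face by face in increasing dimension, using $d\omega=0$ to guarantee the compatibility (closedness) of each local problem, the inductive hypothesis to produce the local continuous potentials, and Propositions~\ref{Prop1}--\ref{Prop2} together with Lemma~\ref{lemma101} to control single-valuedness and to make the degree-of-freedom bookkeeping close. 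The hat function $\m$ at $z$ (continuous, vanishing on $\partial T$, with $\m_i=(n+1)\la_i$) is the natural multiplier to absorb the corrections while automatically preserving the homogeneous boundary conditions.

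The main obstacle is exactly this continuity upgrade: converting the single-valued \emph{tangential} traces of $\rho_0$ into \emph{full} continuity across the interior faces. This is a genuinely global matching problem on the star of $z$, not an element-by-element fact, and it is where the hypothesis $d\omega=0$ and the special geometry of the Alfeld split must be used in an essential way — namely that all interior faces share $z$, so that $x-z$ is tangent to each of them and $d\la_f(x-z)=0$, which is what forces the jumps into the form governed by Lemma~\ref{zerotrace}. Arranging $\tr_f v=0$ on all faces at once, consistently with $d\rho=\omega$ and the boundary data, is the crux; the dimension and degree-of-freedom counts coming from Propositions~\ref{Prop1}--\ref{Prop2} are precisely the tools I expect to need to close it.
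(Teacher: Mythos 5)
Your setup is sound as far as it goes: the existence of a discontinuous potential $\rho_0\in\mathring{V}_{d,r+1}^{k-1}(T^\s)$, the identification (via Lemmas~\ref{lemma1} and~\ref{zerotrace}) of the continuity obstruction with the tangential traces $\tr_f v$ on the interior faces, and the base case $k=1$ are all correct. But the argument stops exactly where the theorem's content begins. Replacing $\rho_0$ by $\rho_0+\theta$ with $\theta$ piecewise closed, boundary-compatible, and carrying prescribed jumps is not a reduction of the problem: such a $\theta$ exists if and only if the theorem holds (take $\theta=\rho-\rho_0$), so the ``matching problem'' you pose is a restatement, not a step forward. The inductive scheme you sketch for solving it does not close. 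Writing $\theta|_{T_i}=d\xi_i$ turns the face condition $[\theta]_f=-[\rho_0]_f$ into a condition on $d(\xi_i-\xi_j)$ restricted to $f$, which is an exactness problem posed on an $(n-1)$-dimensional face; your induction is on the form degree $k$ in dimension $n$, so the inductive hypothesis says nothing about it. Moreover, a ``face by face'' correction comes with no argument that an adjustment made on one face does not destroy continuity already arranged on the others --- this simultaneity is precisely the difficulty, and you concede it is left open (``\ldots is the crux; the dimension and degree-of-freedom counts \ldots are precisely the tools I expect to need to close it''). A plan that names the crux but defers it to expected bookkeeping is a gap, not a proof.

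For contrast, the paper avoids any global matching problem by never producing a discontinuous potential in the first place: it inducts on powers of the hat function $\m$, i.e.\ on polynomial degree rather than on $k$. Lemma~\ref{lemma2} writes $\omega=d\m\we v+\m w$ with $\tr_f v$ single-valued; Proposition~\ref{Prop2} (or Lemma~\ref{lemma101} when the degree reaches zero) then furnishes a single \emph{global} polynomial $\gamma\in\pol_r\La^{k-1}(T)$ whose traces on $\Delta_{n-1}(T)$ match those of $v$ up to the factor $\ell+1$, and Lemma~\ref{zerotrace} converts the residual mismatch into a factor of $\m$; this is Lemma~\ref{crucial}, giving $\m^{\ell}\omega_\ell=d(\m^{\ell+1}\gamma)+\m^{\ell+1}\omega_{\ell+1}$ with $\omega_{\ell+1}$ again in the homogeneous space, of one lower degree. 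Iterating down to degree zero yields $\rho=\m\gamma_r+\m^2\gamma_{r-1}+\cdots+\m^{r+1}\gamma_0$, which is continuous and vanishes on $\partial T$ automatically, because $\m$ is continuous, $\m|_{\partial T}=0$, and each $\gamma_\ell$ is a polynomial on all of $T$. If you want to salvage your route, the missing ingredient is exactly such a mechanism for upgrading single-valued trace data to global data --- that is, you would end up reproving Lemma~\ref{crucial}.
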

%
%
\begin{theorem}\label{cor1}
Suppose that $\omega \in V_{d,r}^k(T^\s)$ and $d\omega=0$.  Then there exists a $\rho \in M_{r+1}^{k-1}(T^\s)$ such that $d \rho=\omega$.
\end{theorem}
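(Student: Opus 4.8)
The plan is to reduce Theorem \ref{cor1} to the boundary-condition version already captured by Theorem \ref{mainthm}. The essential difference between the two statements is that Theorem \ref{mainthm} assumes $\omega$ has vanishing trace on $\partial T$ and produces a potential $\rho$ with vanishing boundary trace, whereas Theorem \ref{cor1} makes no such assumption and asks only for a continuous potential $\rho \in M_{r+1}^{k-1}(T^\s)$. So the strategy is: first peel off a continuous piecewise polynomial $k$-form $\eta \in M_{r}^{k}(T^\s)$ whose trace on $\partial T$ matches that of $\omega$ and which is itself closed (or at least whose exterior derivative we can control), then apply Theorem \ref{mainthm} to the difference $\omega - d\mu$ for a suitable correction, reducing to a form with zero boundary trace.

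More concretely, I would proceed as follows. First I would examine $\tr_{\partial T}\omega$. Since $d\omega = 0$, the trace $\tr_f\omega$ is single-valued and closed on each face $F \in \Delta_{n-1}(T)$, and on the whole boundary sphere $\partial T$ one can use the classical polynomial exactness on a single simplex (the $\pol_r\Lambda^\bullet$ complex is exact, via the Koszul operator $\kappa$ and Propositions \ref{Prop2}--\ref{Prop1}) together with an induction on dimension $n$ to build a continuous piecewise polynomial $(k-1)$-form $\rho_\partial$ on $\partial T$, i.e.\ defined on the induced Alfeld split of each face, such that $d\rho_\partial = \tr_{\partial T}\omega$ as a boundary form. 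The second step is to extend $\rho_\partial$ to a global $\rho_0 \in M_{r+1}^{k-1}(T^\s)$ with the prescribed boundary trace; such an extension exists because the degrees of freedom for the continuous spaces $M_{r+1}^{k-1}$ split into boundary and interior parts, and one can simply prescribe the boundary DOFs to match $\rho_\partial$ and set the interior ones to zero. Then $\omega - d\rho_0$ is again closed and now has vanishing trace on $\partial T$, so it lies in $\mathring{V}_{d,r}^k(T^\s)$ (here one uses that $d\rho_0$ is a continuous piecewise polynomial, hence in $V_{d,r}^k(T^\s)$, and that its boundary trace equals $\tr_{\partial T}\omega$ by construction). Finally I would invoke Theorem \ref{mainthm} to obtain $\rho_1 \in \mathring{M}_{r+1}^{k-1}(T^\s)$ with $d\rho_1 = \omega - d\rho_0$, and set $\rho = \rho_0 + \rho_1 \in M_{r+1}^{k-1}(T^\s)$, which satisfies $d\rho = \omega$.

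I expect the main obstacle to be the first step: constructing a continuous piecewise polynomial potential for the boundary trace $\tr_{\partial T}\omega$ on the Alfeld-split boundary, and verifying that it can be taken in the correct continuous space with the right degree $r+1$. The subtlety is that $\partial T$ is itself a union of faces meeting along lower-dimensional simplices, each face carrying its own Alfeld (Clough--Tocher-type) sub-split, so the potential must be constructed consistently across faces so that it glues to a genuinely continuous global form. The natural way to handle this is by induction on the spatial dimension $n$, treating each face $F_i$ as an $(n-1)$-simplex with its own barycentric refinement and applying the theorem inductively, then reconciling the potentials on the overlaps (the $(n-2)$-simplices) by a further correction that is itself a closed boundary form of one lower degree. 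A secondary point requiring care is the degree bookkeeping and the edge case $k=0$ (where $\rho$ would be a constant, and the statement is essentially trivial once one checks that a closed $0$-form is locally constant and in fact globally constant on the connected set $T$) as well as the top case $k=n$ (where the average-zero normalization is not present in the hypothesis-free version, so one must confirm surjectivity of $d$ onto all of $V_{d,r}^n$). These boundary cases should follow from the single-simplex results in Section \ref{section-2}, but they must be stated explicitly to anchor the induction.
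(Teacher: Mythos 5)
Your reduction skeleton (subtract a correction so the remainder lands in $\mathring{V}_{d,r}^k(T^\s)$, then invoke Theorem \ref{mainthm}) is the same as the paper's, but the way you build the correction has a genuine gap, and it starts from a false geometric premise. In an Alfeld split only the \emph{interior} faces (those containing the barycenter $z$) are created by the refinement; every boundary face $F\in\Delta_{n-1}(T)$ remains a single, unsplit $(n-1)$-simplex of one $T_i$. So there is no ``Clough--Tocher-type sub-split'' on the faces of $T$, and your proposed induction --- applying the theorem in dimension $n-1$ to the Alfeld split of each face --- has nothing to act on. What your first step actually requires is exactness (modulo the cohomology of $S^{n-1}$) of the piecewise polynomial de Rham complex on the simplicial sphere $\partial T$ formed by the $n+1$ unsplit faces: given the compatible closed polynomial $k$-forms $\tr_{F_i}\omega$, produce polynomial $(k-1)$-forms of degree $r+1$ on each face, exact for them and with matching traces on the ridges. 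That statement is neither proved nor cited in the paper, does not follow from the single-simplex results of Section \ref{section-2}, and carries its own cohomological bookkeeping (for $k=n-1$ one needs $\int_{\partial T}\tr\,\omega=0$, which does follow from Stokes and $d\omega=0$, but the discrete sphere exactness itself is the hard part). Your extension step is also not free: the paper constructs DOFs only for $n=3$ and for specific spaces, and extending \emph{tangential} boundary data to a fully continuous form in $M_{r+1}^{k-1}(T^\s)$ requires choosing normal components consistently at ridges, i.e.\ a trace-surjectivity argument you have not supplied. Finally, the case $k=n$, which you flag but leave open, needs a different mechanism entirely, since $n$-forms have no trace: there one must instead arrange $\int_T(\omega-d\rho_0)=0$ to enter $\mathring{V}_{d,r}^n(T^\s)$.

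For contrast, the paper exploits the unsplit-boundary-face fact in a much more economical way. It takes the canonical (Arnold--Falk--Winther) projection $\Pi\omega\in\pol_r\La^k(T)$ --- a single polynomial form on all of $T$ --- defined by matching the moments of Proposition \ref{Prop2} (Lemma \ref{lemma101} when $r=0$). Because each $\tr_F\omega$ is already a polynomial of degree $r$ on the whole face $F$ (precisely because boundary faces are unsplit), uniqueness in Proposition \ref{Prop2} applied on $F$ gives $\tr_F\Pi\omega=\tr_F\omega$; moreover $d\Pi\omega=0$, and for $k=n$ the means agree. Hence $\xi=\omega-\Pi\omega$ is closed and lies in $\mathring{V}_{d,r}^k(T^\s)$, Theorem \ref{mainthm} yields $\varphi\in\mathring{M}_{r+1}^{k-1}(T^\s)$ with $d\varphi=\xi$, and the Koszul exactness of the polynomial complex on the single simplex $T$ yields a polynomial $\psi$ with $d\psi=\Pi\omega$; then $\rho=\varphi+\psi$. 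In other words, the paper's ``correction'' is a global polynomial, which sidesteps both the boundary-sphere exactness and the extension problems that your route leaves unresolved. If you want to salvage your approach, those two missing ingredients are exactly what you would have to prove.
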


\begin{remark}
The case $k=n-1$ in Theorem \ref{mainthm},
which corresponds to local finite element
pairs for the Stokes problem,
has been established in \cite{ArnoldQin92,Zhang04,GuzmanNeilan18}. 
\end{remark}

The next corollaries easily follow.

\begin{corollary}\label{cor2}
Suppose that $\omega \in \mathring{M}_{r}^k(T^\s)$ and $d\omega=0$. Then there exists a $\rho \in \mathring{M}_{d,r+1}^{k-1}(T^\s)$ such that $d \rho=\omega$.
\end{corollary}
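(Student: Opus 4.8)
The plan is to deduce this corollary directly from Theorem \ref{mainthm}. The key observation is that the hypotheses of Corollary \ref{cor2} are strictly stronger than those of Theorem \ref{mainthm}: if $\omega\in \mathring{M}_r^k(T^\s)$ then in particular $\omega\in \mathring{V}_{d,r}^k(T^\s)$, since a continuous piecewise polynomial $k$-form with vanishing trace on $\partial T$ certainly lies in $V_r^k(T^\s)$, has single-valued traces across interior faces (hence $d\omega\in L^2\Lambda^{k+1}(T)$), and vanishes on the boundary. Thus, applying Theorem \ref{mainthm} to $\omega$ yields a potential $\rho\in \mathring{M}_{r+1}^{k-1}(T^\s)$ with $d\rho=\omega$. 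It remains only to verify that this $\rho$ in fact belongs to the smaller space $\mathring{M}_{d,r+1}^{k-1}(T^\s)$.

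The first step, then, is to confirm the membership $\mathring{M}_r^k(T^\s)\subset \mathring{V}_{d,r}^k(T^\s)$ so that Theorem \ref{mainthm} applies, which is immediate from the definitions. The second step is to check the two additional regularity constraints that distinguish $\mathring{M}_{d,r+1}^{k-1}$ from $\mathring{M}_{r+1}^{k-1}$: namely that $d\rho\in C^0\Lambda^k(T)$ and that $d\rho|_{\partial T}=0$. But both of these are automatic once we know $d\rho=\omega$: by hypothesis $\omega\in \mathring{M}_r^k(T^\s)$, so $\omega$ is itself continuous and vanishes on $\partial T$. Hence $d\rho=\omega\in C^0\Lambda^k(T)$, which combined with $\rho\in M_{r+1}^{k-1}(T^\s)$ gives $\rho\in M_{d,r+1}^{k-1}(T^\s)$; and $d\rho|_{\partial T}=\omega|_{\partial T}=0$ together with $\rho|_{\partial T}=0$ (inherited from $\rho\in \mathring{M}_{r+1}^{k-1}$) places $\rho$ in $\mathring{M}_{d,r+1}^{k-1}(T^\s)$.

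I do not expect a genuine obstacle here, as the result is essentially a bookkeeping consequence of Theorem \ref{mainthm}: the continuity and boundary conditions demanded of $d\rho$ are supplied \emph{for free} by transporting them from $\omega$ through the identity $d\rho=\omega$. The one point that warrants a careful sentence is the degree indexing in the $k=n$ boundary case, where the spaces carry the extra average-zero constraint $\int_T\omega=0$; there one should note that $\omega=d\rho$ with $\rho|_{\partial T}=0$ forces $\int_T\omega=\int_T d\rho=\int_{\partial T}\tr\,\rho=0$ by Stokes' theorem, so the constraint is consistent and no difficulty arises. Apart from this remark, the proof is a short chain of inclusions and the argument can be written in a few lines.
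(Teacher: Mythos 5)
Your proposal is correct and follows essentially the same route as the paper: note the inclusion $\mathring{M}_{r}^k(T^\s)\subset \mathring{V}_{d,r}^k(T^\s)$, invoke Theorem \ref{mainthm} to obtain $\rho\in\mathring{M}_{r+1}^{k-1}(T^\s)$ with $d\rho=\omega$, and then observe that the continuity and boundary vanishing of $d\rho$ are inherited directly from $\omega$, so $\rho\in\mathring{M}_{d,r+1}^{k-1}(T^\s)$. Your extra remark on the $k=n$ case is harmless but unnecessary, since the average-zero condition is already built into the definition of $\mathring{M}_r^n(T^\s)$, which is exactly what makes the inclusion into $\mathring{V}_{d,r}^n(T^\s)$ hold.
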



\begin{corollary}\label{cor3}
Suppose that $\omega \in \mathring{M}_{d,r}^k(T^\s)$ and $d\omega=0$.  Then there exists a $\rho \in \mathring{M}_{d,r+1}^{k-1}(T^\s)$ such that $d \rho=\omega$.
\end{corollary}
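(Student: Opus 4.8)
The plan is to deduce the statement directly from Corollary \ref{cor2}, which already carries all of the substantive content; the only thing that needs verifying is a containment of spaces relating the two hypotheses.

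First I would observe the inclusion $\mathring{M}_{d,r}^k(T^\s)\subseteq \mathring{M}_r^k(T^\s)$. Unwinding the definitions, membership in $\mathring{M}_{d,r}^k(T^\s)$ requires $\omega\in M_{d,r}^k(T^\s)\subseteq M_r^k(T^\s)$ together with $\omega|_{\partial T}=0$, along with the extra conditions $d\omega\in C^0\La^{k+1}(T)$ and $d\omega|_{\partial T}=0$; membership in $\mathring{M}_r^k(T^\s)$ asks only for $\omega\in M_r^k(T^\s)$ with $\omega|_{\partial T}=0$. Hence the hypotheses of the present corollary are strictly stronger than those of Corollary \ref{cor2}. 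Given $\omega\in \mathring{M}_{d,r}^k(T^\s)$ with $d\omega=0$, this inclusion places $\omega$ in $\mathring{M}_r^k(T^\s)$, while the condition $d\omega=0$ is retained verbatim. Corollary \ref{cor2} then produces a $\rho\in \mathring{M}_{d,r+1}^{k-1}(T^\s)$ with $d\rho=\omega$, which is exactly the desired conclusion.

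There is honestly no internal obstacle in this argument: the corollary merely records the exactness, at the $k$-th slot, of the complex assembled from the spaces $\mathring{M}_{d,\bullet}^{\bullet}(T^\s)$, and the entire mathematical weight has been pushed upstream into Corollary \ref{cor2} and, before that, Theorem \ref{mainthm}. The one point meriting a moment's care is to confirm that the space $\mathring{M}_{d,r+1}^{k-1}(T^\s)$ delivered by Corollary \ref{cor2} is literally the space demanded here, so that no regularity or boundary constraint is quietly dropped; since the two are the same space by definition, the proof closes immediately.
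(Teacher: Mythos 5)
Your proposal is correct and is essentially identical to the paper's own proof: the paper likewise dispatches Corollary \ref{cor3} in one line by noting the inclusion $\mathring{M}_{d,r}^k(T^\s) \subset \mathring{M}_{r}^k(T^\s)$ and invoking Corollary \ref{cor2}. Your extra check that the target space $\mathring{M}_{d,r+1}^{k-1}(T^\s)$ matches verbatim is a fine sanity check but adds nothing beyond what the paper's argument already contains.
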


\begin{corollary}\label{cor4}
There holds
\begin{align}
\label{eqn:DimCount1}
\dim M_{d,r}^k(T^\s) & = \dim M^{k+1}_{r-1}(T^\s) + \dim M_r^k(T^\s) - \dim V_{d,r-1}^{k+1}(T^\s),\\
\label{eqn:DimCount2}
\dim \mathring{M}_{d,r}^k(T^\s) & = \dim \mathring{M}^{k+1}_{r-1}(T^\s) + \dim \mathring{M}_r^k(T^\s) - \dim \mathring{V}_{d,r-1}^{k+1}(T^\s).
\end{align}
\end{corollary}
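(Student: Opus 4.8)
The plan is to prove the two dimension formulas in Corollary \ref{cor4} by exhibiting short exact sequences of finite-dimensional vector spaces and then applying rank--nullity, since the dimension of a space in the middle of a short exact sequence is the sum of the dimensions of the outer two spaces. I will treat \eqref{eqn:DimCount1} in detail; the boundary-condition version \eqref{eqn:DimCount2} follows by the same argument applied to the ring-decorated spaces, using Theorem \ref{mainthm} (resp.\ Corollary \ref{cor3}) in place of Theorem \ref{cor1} (resp.\ Corollary \ref{cor2}).

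First I would set up the exterior derivative as a linear map on $M_{d,r}^k(T^\s)$. By definition, every $\omega\in M_{d,r}^k(T^\s)$ satisfies $d\omega\in C^0\La^{k+1}(T)$, and since $\omega|_{T_i}\in\pol_r\La^k(T_i)$ we have $d\omega|_{T_i}\in\pol_{r-1}\La^{k+1}(T_i)$; combined with the continuity this shows $d\omega\in M_{r-1}^{k+1}(T^\s)$. Thus
\begin{equation*}
d:\ M_{d,r}^k(T^\s)\longrightarrow M_{r-1}^{k+1}(T^\s)
\end{equation*}
is a well-defined linear map, and rank--nullity gives
\begin{equation*}
\dim M_{d,r}^k(T^\s) = \dim\ker d + \dim\range d.
\end{equation*}
The two remaining steps are to identify the kernel and the range of this map.

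For the kernel, $\ker d$ consists of those $\omega\in M_{d,r}^k(T^\s)$ with $d\omega=0$. Any such $\omega$ lies in $M_r^k(T^\s)\subset V_{d,r}^k(T^\s)$ with vanishing exterior derivative, so by Theorem \ref{cor1} it is of the form $\omega=d\rho$; conversely every exact form in $M_r^k(T^\s)$ is closed. Hence $\ker d$ equals the image of $d$ acting on $(k-1)$-forms landing in $M_r^k(T^\s)$. To compute $\dim\ker d$ I would therefore count the exact closed forms: these are precisely the closed forms in $M_r^k(T^\s)$, and the dimension of the space of closed forms in $M_r^k(T^\s)$ equals $\dim M_r^k(T^\s)$ minus the dimension of the space of exact (equivalently, by surjectivity onto the closed subspace, all of) $d M_r^k(T^\s)\subset M_{r-1}^{k+1}(T^\s)$. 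The clean way to organize this is to observe that $\dim\ker d = \dim M_r^k(T^\s) - \dim\big(d\,M_r^k(T^\s)\big)$, where $d\,M_r^k(T^\s)$ denotes the image of the exterior derivative restricted to the \emph{continuous} space $M_r^k(T^\s)$.

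For the range, I claim $\range d = \big\{\eta\in M_{r-1}^{k+1}(T^\s):\ \eta=d\mu \text{ for some } \mu\in V_{d,r}^k(T^\s)\big\}$, and that this coincides with the closed forms in $M_{r-1}^{k+1}(T^\s)$ that are exact as $V$-forms. Concretely, $\eta\in\range d$ iff $\eta$ is a continuous $(k+1)$-form with continuous exterior derivative arising as $d\omega$ with $\omega$ continuous; by Theorem \ref{cor1} every closed $\eta\in V_{d,r-1}^{k+1}(T^\s)$ is $d$ of a continuous form, and the continuity of $d\eta=0$ is automatic. Matching the bookkeeping so that the three dimensions assemble into \eqref{eqn:DimCount1} is the delicate point: I expect the main obstacle to be the careful identification $\dim\range d = \dim M_{r-1}^{k+1}(T^\s)-\dim V_{d,r-1}^{k+1}(T^\s) + \big(\text{correction}\big)$ and the verification that the correction term is exactly $\dim M_r^k(T^\s)-\dim\ker d$, so that substituting both into rank--nullity telescopes to the stated formula. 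The key enabling fact is that Theorem \ref{cor1} lets one replace "exact with a possibly-discontinuous potential'' by "exact with a continuous potential,'' which is precisely what forces the $V$-space rather than an $M$-space to appear in the subtracted term. Once the kernel and range are matched to $\dim M_r^k(T^\s)-\dim(dM_r^k)$ and $\dim(dM_r^k)$ through the commuting role of Theorem \ref{cor1}, rank--nullity yields \eqref{eqn:DimCount1}, and the identical argument with homogeneous boundary and average-zero constraints, invoking Theorem \ref{mainthm} and Corollary \ref{cor2} in the appropriate spots, yields \eqref{eqn:DimCount2}.
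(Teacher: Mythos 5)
Your overall strategy is sound and is essentially the paper's, reorganized: apply rank--nullity to $d\colon M_{d,r}^k(T^\s)\to M_{r-1}^{k+1}(T^\s)$, identify $\ker d$ with the closed forms in $M_r^k(T^\s)$ (the two kernels coincide because $d\omega=0$ is trivially continuous), and identify $\range d$ with the closed forms in $M_{r-1}^{k+1}(T^\s)$ via Theorem \ref{cor1}. Up to that point you are correct. But what you defer as ``the delicate point'' is the actual content of the corollary, and it is never carried out. After your two identifications you only have
\begin{align*}
\dim\ker d &= \dim M_r^k(T^\s) - \dim\big(d\,M_r^k(T^\s)\big),\\
\dim\range d &= \dim M_{r-1}^{k+1}(T^\s) - \dim\big(d\,M_{r-1}^{k+1}(T^\s)\big),
\end{align*}
and both image dimensions on the right are still unknown; your ``correction term'' formulation merely restates the desired identity rather than proving it. The missing idea is to go \emph{one more step along the complex}: by Theorem \ref{cor1} at form degree $k+1$, $d\,M_r^k(T^\s)$ equals the kernel of $d$ on $V_{d,r-1}^{k+1}(T^\s)$, so
\[
\dim\big(d\,M_r^k(T^\s)\big) = \dim V_{d,r-1}^{k+1}(T^\s) - \dim\big(d\,V_{d,r-1}^{k+1}(T^\s)\big);
\]
and by Theorem \ref{cor1} at form degree $k+2$ together with the standard exactness of the $V$-sequence from \cite{AFW06}, both $d\,M_{r-1}^{k+1}(T^\s)$ and $d\,V_{d,r-1}^{k+1}(T^\s)$ equal the space of closed forms in $V_{d,r-2}^{k+2}(T^\s)$, hence have the same dimension. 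Only after this cancellation does
\[
\dim\ker d + \dim\range d = \dim M_r^k(T^\s) + \dim M_{r-1}^{k+1}(T^\s) - \dim V_{d,r-1}^{k+1}(T^\s)
\]
follow. This two-step bookkeeping is exactly what the paper encodes in its two long exact sequences, both of which are continued to $V_{d,r-2}^{k+2}(T^\s)$ precisely so that the $\dim\ker V_{d,r-2}^{k+2}(T^\s)$ terms cancel.

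There is also a concrete error in your closing sentence: you match $\dim\range d$ to $\dim\big(d\,M_r^k(T^\s)\big)$. This is false --- $\range d$ consists only of the \emph{continuous} closed forms, whereas $d\,M_r^k(T^\s)$ is the full space of closed forms in $V_{d,r-1}^{k+1}(T^\s)$ --- and if it were true, rank--nullity would yield $\dim M_{d,r}^k(T^\s)=\dim M_r^k(T^\s)$, contradicting \eqref{eqn:DimCount1} since $\dim V_{d,r-1}^{k+1}(T^\s)$ exceeds $\dim M_{r-1}^{k+1}(T^\s)$ in general. The same completion (with Theorem \ref{mainthm} and the exactness of the $\mathring{V}$-sequence in place of Theorem \ref{cor1} and the $V$-sequence) is needed before \eqref{eqn:DimCount2} can be claimed.
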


\begin{remark}\label{rem:CountingFun} Using Corollary \ref{cor4}, one can obtain explicit formulas for the dimensions of $M_{d,r}^k(T^\s)$ and  $\mathring{M}_{d,r}^k(T^\s)$
in terms of $r$, $k$, and $n$.
To this end, we can easily show that 
\begin{equation*}
\dim M_r^0(T^\s)=\sum_{s=0}^{n} \# \Delta_s(T^\s) \dim \pol_{r-s-1}\Lambda^s(\bbR^s)=(n+1) \binom{r-1}{n} + \sum_{s=0}^{n-1} \binom{n+2}{s+1}\binom{r-1}{s},
\end{equation*}
where we used that 
\begin{equation*}
\dim \pol_{r-s-1}\Lambda^s(\bbR^s)= \binom{r-1}{s}.
\end{equation*}
Hence, since  $\dim M_r^k(T^\s) = \binom{n}{k} \dim M_r^0(T^\s)$, we have
\begin{align*}
\dim M_r^k(T^\s)
& = \binom{n}{k}\Big[(n+1) \binom{r-1}{n} + \sum_{s=0}^{n-1} \binom{n+2}{s+1}\binom{r-1}{s}\Big]\\
  & = \binom{n}{k} \Big[\binom{r+n+1}{n+1} - \binom{r}{n+1}\Big].
\end{align*}

Likewise, Proposition \ref{Prop2} implies that (see \cite{AFW06} for details)
\begin{align*}
\dim V_{d,r}^k(T^\s) 
&= \sum_{s=k}^n \# \Delta_s(T^\s) \dim \pol^-_{r+k-s}\Lambda^{s-k}(\bbR^s)\\
& = (n+1) \binom{r-1}{n-k} \binom{r+k}{k} +\sum_{s=k}^{n-1} \binom{n+2}{s+1} \binom{r-1}{s-k} \binom{r+k}{k}\\
& = \binom{r+k}{k}\Big[\binom{r+n+1}{n-k+1} - \binom{r}{n+1-k}
\Big].
\end{align*}
We then find
\begin{align*}
\dim M_{d,r}^k(T^\s)
& = \binom{n}{k+1}\Big[ \binom{r+n}{n+1} - \binom{r-1}{n+1}\Big]
+ \binom{n}{k}\Big[\binom{r+n+1}{n+1}-\binom{r}{n+1}\Big]\\
&\qquad - \binom{r+k}{k+1}\Big[ \binom{r+n}{n-k} -\binom{r-1}{n-k}\Big].
\end{align*}
Similar arguments also show that
%
\begin{align*}
\dim \mathring{M}_{d,r}^k(T^\s)
& = \binom{n}{k+1}\Big[ \binom{r+n-1}{n+1} - \binom{r-2}{n+1}\Big]
 +\binom{n}{k}\Big[ \binom{r+n}{n+1} - \binom{r-1}{n+1}\Big]\\
&\qquad - \binom{r+k}{k+1}\Big[\binom{r+n-1}{n-k}- \binom{r-2}{n-k}\Big].
 \end{align*}
\end{remark}

\begin{remark}
Corollary \ref{cor4} gives, for example,
the local dimension of $C^1$ elements
on an Alfeld split.  Namely, taking
$k=0$ in the dimension count yields
\begin{align*}
\dim M_{d,r}^0(T^\s) 
%
%
%
& = n \Big[\binom{r+n}{n+1} - \binom{r-1}{n+1}\Big] + \Big[ \binom{r+n+1}{n+1} - \binom{r}{n+1}\Big] - r\Big[ \binom{r+n}{n} - \binom{r-1}{n}\Big]\\
&  = \binom{r+n}{n}+ n \binom{r-1}{n}.
\end{align*}
This dimension count has also been established in \cite{Tanya13} (also see \cite{Schenck})
using different arguments.  Note that, since $\dim \pol_r(T) = \binom{r+n}{n}$,
 $M_{d,r}^0(T^\s) = \pol_r(T)$ for $r\le n$.
\end{remark}

%
\subsection{Preliminary results} Before proving the main results in this section, we need some preliminary results.
We start with a well-known result stating that the traces of forms in $V_{d,r}^{k}(T^\s)$ are single-valued.

\begin{proposition}\label{prop201}
If $\omega \in  V_{d,r}^{k}(T^\s)$, then $\tr_f \omega$ is single valued for any sub-simplex $f \in \Delta_s(T^\s)$ for $s \ge k$.  
In particular, let $T_1, T_2 \subset T^\s$, $\omega_i= \omega|_{K_i}$, and suppose that $f \subset \Delta_s(T_1), \Delta_s(T _2)$.
Then if $r_1, r_2 \ldots, r_k \in \mathbb{R}^n$ are tangent to $f$, then
\begin{equation*}
(\omega_1)_x(r_1, \ldots, r_k)= (\omega_2)_x(r_1, \ldots, r_k) \text{ for all } x \in f.
\end{equation*}
\end{proposition}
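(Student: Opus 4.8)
The plan is to deduce the single-valuedness of $\tr_f\omega$ on an arbitrary sub-simplex from the continuity of tangential traces across the interior $(n-1)$-faces of $T^\s$, and to obtain the latter from the hypothesis $d\omega\in L^2\La^{k+1}(T)$ via a distributional integration-by-parts argument. First I would record the structural feature of the Alfeld split that trivializes the reduction: since every cell $T_i$ contains the barycenter $z$, any two cells $T_p,T_q\in T^\s$ share the common $(n-1)$-face
\[
F_{pq}=[z,x_0,\ldots,\widehat{x_p},\ldots,\widehat{x_q},\ldots,x_n].
\]
Consequently, if $f\in\Delta_s(T^\s)$ is a face of both $T_p$ and $T_q$, then the vertices of $f$ lie in $\{z\}\cup\{x_i:i\ne p,q\}$, whence $f\subset F_{pq}$.

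Next I would establish the $(n-1)$-face continuity. Let $\eta\in C_c^\infty\La^{n-k-1}$ be supported in the interior of $T$. Applying Stokes' theorem $\int_{T_i}d(\omega\we\eta)=\int_{\partial T_i}\tr(\omega\we\eta)$ on each cell, using $d(\omega\we\eta)=d\omega\we\eta+(-1)^k\omega\we d\eta$, and summing over $i$, the traces on $\partial T$ drop out (because $\eta$ is compactly supported) while the interior-face contributions combine into jumps:
\[
\int_T (d_h\omega)\we\eta+(-1)^k\int_T\omega\we d\eta=\sum_{F\ \text{interior}}\int_F\jump{\tr_F\omega}\we\tr_F\eta,
\]
up to orientation signs, where $d_h\omega$ is the piecewise exterior derivative and $\jump{\cdot}$ the jump across $F$. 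Because $\omega\in V_{d,r}^k(T^\s)$, the distributional $d\omega$ is an $L^2$ form, necessarily equal to $d_h\omega$ almost everywhere; by the definition of the distributional derivative the left-hand side therefore vanishes, so the face-sum vanishes for every admissible $\eta$. Fixing one interior face $F$ and choosing $\eta$ whose trace on $F$ is an arbitrary smooth $(n-k-1)$-form and whose trace on the remaining faces is zero, the non-degeneracy of the wedge pairing $\La^k(F)\times\La^{(n-1)-k}(F)\to\La^{n-1}(F)$ forces $\jump{\tr_F\omega}=0$; hence $\tr_F\omega$ is single-valued across every interior $(n-1)$-face.

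Finally I would combine the two steps via transitivity of the trace. Given $f\in\Delta_s(T^\s)$ with $s\ge k$ shared by $T_p$ and $T_q$, the structural step places $f$ inside $F_{pq}$, and the continuity step gives $\tr_{F_{pq}}(\omega|_{T_p})=\tr_{F_{pq}}(\omega|_{T_q})$; applying $\tr_f=\tr_f\circ\tr_{F_{pq}}$ then yields $\tr_f(\omega|_{T_p})=\tr_f(\omega|_{T_q})$. This is exactly the asserted equality of $(\omega_1)_x(r_1,\ldots,r_k)$ and $(\omega_2)_x(r_1,\ldots,r_k)$ for tangent vectors $r_i$ to $f$ and $x\in f$ (for $s<k$ both traces vanish identically).

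The main obstacle is the distributional step: one must argue carefully that requiring $d\omega\in L^2$ annihilates the measure-supported part on the codimension-one skeleton, and that the test traces $\tr_F\eta$ are rich enough to isolate and kill each individual face jump. This is the standard finite element exterior calculus fact recorded in \cite{AFW06}; the remaining reduction is purely combinatorial and is simplified by the special fan structure of the Alfeld split, in which all cells meet along $(n-1)$-faces through $z$.
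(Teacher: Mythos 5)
Your proposal is correct in outline, but it is worth noting that the paper does not actually prove Proposition \ref{prop201} at all: it is stated as a well-known fact, with the earlier remark in Section \ref{section-3} pointing to \cite{AFW06}, where the equivalence between $d\omega\in L^2\La^{k+1}(T)$ and single-valuedness of tangential traces across interior codimension-one faces is established. Your middle step (the distributional Stokes argument showing that an $L^2$ exterior derivative kills the measure-supported jump terms on the interior $(n-1)$-faces) is exactly the content of that citation, so there you are reproving the standard finite element exterior calculus fact rather than doing anything new; the details you flag (constructing test forms $\eta$ whose trace is arbitrary on one interior face and vanishes on the others, and invoking nondegeneracy of the wedge pairing) are routine but would need to be written out. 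What is genuinely a nice addition in your argument is the structural reduction specific to the Alfeld split: since every pair of cells $T_p,T_q$ shares the $(n-1)$-face $F_{pq}=[z,x_0,\ldots,\widehat{x_p},\ldots,\widehat{x_q},\ldots,x_n]$, and any simplex common to $T_p$ and $T_q$ has its vertices in $\{z\}\cup\{x_i: i\ne p,q\}$ and hence lies in $F_{pq}$, the single-valuedness of traces on \emph{lower-dimensional} sub-simplices follows from face continuity plus transitivity of pullbacks, $\tr_f=\tr_f\circ\tr_{F_{pq}}$. On a general triangulation this step requires a chaining argument around the patch of cells containing $f$ (two cells can meet only in a vertex or an edge), so your observation that the fan structure of $T^\s$ trivializes it is a clean, self-contained alternative to the paper's appeal to the literature.
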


Next, we prove an analogue of Lemma \ref{lemma1}
on an Alfeld split.
\begin{lemma}\label{lemma2}
Any $\omega \in \mathring{V}_{d,r}^k(T^\s)$ satisfies 
\begin{equation}\label{rep}
\omega= d \m \we v+ \m  w
\end{equation}
for some $v \in V_r^{k-1}(T^\s)$ and $w \in  V_{r-1}^{k}(T^\s)$. Moreover, $\tr_f v$ is single-valued for all $f \in \Delta_s(T)$ 
and $s\ge k-1$.
\end{lemma}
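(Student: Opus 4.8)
The plan is to prove the representation \eqref{rep} locally on each simplex $T_i$ and then use the single-valued trace property (Proposition \ref{prop201}) to patch things together, with the zero boundary conditions ensuring the local pieces assemble into global $V_r^{k-1}(T^\s)$ and $V_{r-1}^{k}(T^\s)$ forms. First I would recall that $\m_i = (n+1)\lambda_i$ on each $T_i$, so that $\m$ vanishes precisely on the face of $T_i$ opposite the barycenter $z$; this is the face $F_i \in \Delta_{n-1}(T)$ of the original simplex $T$. Since $\omega \in \mathring{V}_{d,r}^k(T^\s)$ satisfies $\tr_{F} \omega = 0$ for all $F\in \Delta_{n-1}(T)$, in particular $\tr_{F_i}(\omega|_{T_i}) = 0$ for each $i$. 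This is exactly the hypothesis of Lemma \ref{lemma1} applied on the simplex $T_i$ with the vertex/face index corresponding to the barycenter. Applying Lemma \ref{lemma1} on each $T_i$ therefore yields $\omega|_{T_i} = d\m_i \we v_i + \m_i w_i$ for some $v_i \in \pol_r\La^{k-1}(T_i)$ and $w_i \in \pol_{r-1}\La^k(T_i)$, up to the harmless constant $(n+1)$ rescaling between $\m_i$ and $\lambda_i$.

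The subtlety is that the decomposition produced by Lemma \ref{lemma1} is \emph{not unique}: one may add to $v_i$ any form annihilated by $d\m_i\we(\cdot)$ and compensate in $w_i$. Concretely, the ambiguity in $v_i$ is precisely a multiple of $\m_i$ times a $(k-1)$-form (since $d\m_i \we (\m_i \eta) = -\m_i\, d\m_i\we\eta$ can be absorbed into the $\m_i w_i$ term). So to define a single global $v\in V_r^{k-1}(T^\s)$ and $w\in V_{r-1}^k(T^\s)$ I must exploit this freedom to arrange that the $v_i$ agree on interior faces in the appropriate tangential sense, and only afterward read off $w$. The natural way to pin down $v_i$ is to use the structure of Lemma \ref{lemma1}: writing $\omega|_{T_i}$ in the basis adapted to $d\m_i$, the term $v_i$ is the collection of coefficient forms that carry the factor $d\m_i$, which is canonically determined once we insist that $v_i$ itself contains no $d\m_i$ factor (i.e.\ $v_i$ lies in the span of $d\lambda_\sigma$ with $\sigma$ not involving the barycenter index). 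With that canonical choice, $v_i$ is uniquely determined as the contraction-type object, and $w_i$ is then forced.

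The main work—and the expected main obstacle—is then showing that these canonically chosen $v_i$ have single-valued traces across the interior faces $f\in \Delta_s(T^\s)$ for $s\ge k-1$, so that $v$ is a well-defined element of $V_r^{k-1}(T^\s)$ (and hence automatically $\omega - d\m\we v = \m w$ forces $w\in V_{r-1}^k(T^\s)$). For an interior face $f$ shared by two subsimplices $T_1,T_2$, Proposition \ref{prop201} gives that $\tr_f\omega$ is single-valued. I would compare the two representations $d\m\we v_1 + \m w_1$ and $d\m\we v_2 + \m w_2$ on $f$, using that $\m$ itself is continuous (single-valued) across the Alfeld split by construction, and that its exterior derivative $d\m$, while \emph{not} globally continuous, has a well-controlled jump. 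The key identity to establish is that the tangential part of $d\m$ along any interior face is single-valued (the jump in $\grad\m$ is normal to the shared face, since $\m$ is continuous), which lets me conclude that the $d\m\we v$ contributions match tangentially on $f$ only if the $v_i$ match tangentially. This is essentially an application of Lemma \ref{zerotrace}: the tangential agreement of $d\m\we v_i$ combined with the single-valuedness of $\tr_f\m$ forces $\tr_f v_1 = \tr_f v_2$. Carrying out this jump analysis carefully—distinguishing the cases where $f$ lies on the boundary of $T$ versus an interior face containing $z$, and tracking degrees—is where the real care is needed; the algebraic decomposition from Lemma \ref{lemma1} is routine by comparison.
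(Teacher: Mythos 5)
Your opening step coincides with the paper's: apply Lemma \ref{lemma1} on each $T_i$ (with the face $F_i$ opposite the barycenter, where $\tr_{F_i}\omega=0$) to obtain $\omega|_{T_i}= d\m \we v_i + \m w_i$. After that, the proposal goes astray, for two reasons. First, you have misread what must be proved: the single-valuedness claim is for $f\in\Delta_s(T)$, i.e.\ sub-simplices of the \emph{original} simplex $T$ (which for $s\le n-1$ lie on $\partial T$), not for the interior faces in $\Delta_s(T^\s)$. Second, membership of $v$ in $V_r^{k-1}(T^\s)$ costs nothing: that space consists of piecewise polynomial $L^2$ forms with \emph{no} inter-element continuity, so the piecewise construction already gives $v\in V_r^{k-1}(T^\s)$ and $w\in V_{r-1}^k(T^\s)$. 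Consequently there is no need for a ``canonical choice'' of $v_i$ and nothing to prove about interior faces; what remains is exactly the boundary-face statement, which your proposal never actually establishes.

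Moreover, the mechanism you propose would fail even where it is aimed. On a shared face $f\in\Delta_s(T)$ with $s\le n-2$, we have $\m\equiv 0$ on $f$, hence $\tr_f d\m=0$, so comparing tangential parts of $d\m\we v_1$ and $d\m\we v_2$ on $f$ yields $0=0$ --- no information; and in general $\tr_f(d\m)\we \tr_f(v_1-v_2)=0$ cannot force $\tr_f v_1=\tr_f v_2$, since wedging with a $1$-form has a nontrivial kernel (on interior faces the $\m w$ terms additionally interfere, as $\m\neq 0$ there). Lemma \ref{zerotrace} also cannot be invoked this way: it requires the full restriction $\omega|_{F_i}=0$ (vanishing on all vectors, not merely tangential ones) and compares forms on a single simplex, whereas $v_1$ and $v_2$ live on different sub-simplices $K_1$, $K_2$. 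The missing idea is to go \emph{transverse} to $f$: the paper forms $f'=[z,x_{\tau(0)},\ldots,x_{\tau(s)}]\in\Delta_{s+1}(T^\s)$, a common face of $K_1$ and $K_2$, and uses Proposition \ref{prop201} for $\omega$ on $f'$. For $x\in f$, tangent vectors $r_1,\ldots,r_{k-1}$ of $f$, and $t=z-x_{\tau(0)}$, one has $\m(x)=0$, $d\m(t)=1$, $d\m(r_i)=0$, whence
$(v_j)_x(r_1,\ldots,r_{k-1})=(d\m\we v_j)_x(t,r_1,\ldots,r_{k-1})=\omega_x(t,r_1,\ldots,r_{k-1})$ for $j=1,2$,
and the right-hand side is single-valued. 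This shows $\tr_f v$ is determined by $\omega$ alone --- in particular independent of any choice in the decomposition, which also makes your uniqueness worry moot --- and finishes the proof in a few lines.
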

\begin{proof}
Applying Lemma \ref{lemma1} to each $T_i \in T^\s$ and recalling that $\m_i = (n+1)\lambda_i$,
we get the representation  \eqref{rep}. Moreover, the value  $\tr_F v$ is clearly single-valued
for $F\in \Delta_{n-1}(T)$.

Let  $f \in \Delta_s(T)$ with $ k-1\le s\le n-2$.
Let $K_1, K_2 \in T^\s$ such that $f \in \Delta_s(K_1)$ and $f \in \Delta_s(K_2)$, and set $v_i =v|_{K_i}$.    
Writing $f=[x_{\tau(0)}, x_{\tau(1)} , \ldots , x_{\tau(s)}]$,  we define $f'=[z, x_{\tau(0)}, x_{\tau(1)} , \ldots , x_{\tau(s)}]$,
 and note that $f' \in  \Delta_{s+1}(K_1)$ and $f' \in \Delta_{s+1}(K_2)$.

Let $\{r_i\}_{i=1}^{k-1}\subset \R^n$ be linearly independent vectors
that are tangent to $f$, and set $t = z-x_{\tau(0)}$.  Fix an arbitrary point $x\in f$, and note that $\m(x)=0$ because
$f\subset \partial T$ and $\m\in H^1_0(T)$.  It then follows from the representation 
\eqref{rep} that $\omega_x = (d\m \wedge v)_x$.
We also note that $\{r_i\}_{i=1}^{k-1}$ and $t$ are tangent
to $f'$,
and it thus follows that the quantity $\omega_x(t,r_1,\cdots,r_{k-1})$ is single-valued
because  $\omega  \in \mathring{V}_{d,r}^k(T^\s)$.
Using these two properties and the identities $d\mu(t) = 1$ and $d\mu(r_i)=0$, we find that
\begin{alignat*}{1}
\tr_f(v_1)_x( r_1, \ldots, r_{k-1})=& (v_1)_x( r_1, \ldots, r_{k-1}) \\
=&(d \m \we v_1 )_x( t, r_1, \ldots, r_{k-1})\\
=&\omega_x(t, r_1, \ldots, r_{k-1}) \\
=&  (d \m \we v_2 )_x( t, r_1, \ldots, r_{k-1}) \\
 =&(v_2)_x( r_1, \ldots, r_{k-1}) \\
 =& \tr_f(v_2)_x( r_1, \ldots, r_{k-1}).
\end{alignat*}
Thus, $\tr_f v$ is single-valued.
\end{proof}

The following lemma will be crucial.
\begin{lemma}\label{crucial}
Let $\omega \in \mathring{V}_{d,r}^k(T^\s)$ and  let $\ell \ge 0$ be an integer. If $r \ge 1$,  then there exists  $\gamma \in \pol_{r}\La^{k-1}(T)$ and $\psi \in V_{d,r-1}^k(T^\s)$ such that 
\begin{equation*}
\m^{\ell} \omega=d(\m^{\ell+1} \gamma)+ \m^{\ell+1} \psi.
\end{equation*}
Let $r=0$ and in addition if $k=n$ assume  that $\int_T \m^{\ell} \omega=0$. Then there exists a  $\gamma \in \pol_{0}\La^{k-1}(T)$ such that 
\begin{equation*}
\m^{\ell} \omega=d(\m^{\ell+1} \gamma).
\end{equation*}
\end{lemma}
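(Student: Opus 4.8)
The plan is to reduce the statement to a trace-matching problem on the exterior faces of $T$ and then invoke the vanishing-trace divisibility of Lemma \ref{zerotrace}. First I would apply Lemma \ref{lemma2} to write $\omega = d\m\we v + \m w$ with $v\in V_r^{k-1}(T^\s)$, $w\in V_{r-1}^k(T^\s)$, and $\tr_f v$ single-valued for every $f\in\Delta_s(T)$, $s\ge k-1$. Using $\m^\ell d\m = \tfrac{1}{\ell+1}d(\m^{\ell+1})$ together with the Leibniz rule $d(\m^{\ell+1}\gamma) = (\ell+1)\m^\ell d\m\we\gamma + \m^{\ell+1}d\gamma$, for any candidate $(k-1)$-form $\gamma$ one obtains the identity
\[
\m^\ell\omega - d(\m^{\ell+1}\gamma) = \m^\ell d\m\we\bigl(v-(\ell+1)\gamma\bigr) + \m^{\ell+1}(w - d\gamma).
\]
Thus it suffices to produce a \emph{global} polynomial form $\gamma\in\pol_r\La^{k-1}(T)$ for which the first term on the right is divisible by $\m^{\ell+1}$.

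The key reduction is that this divisibility holds precisely when $\tr_{F_i}\gamma = \tfrac{1}{\ell+1}\tr_{F_i}v$ for every exterior face $F_i$, $0\le i\le n$. Indeed, on each $T_i$ we have $\m=(n+1)\la_i$ and $\m|_{F_i}=0$, so Lemma \ref{zerotrace} shows that $d\la_i\we\bigl(v-(\ell+1)\gamma\bigr)$ is divisible by $\la_i$ on $T_i$ exactly when $\tr_{F_i}\bigl(v-(\ell+1)\gamma\bigr)=0$; this yields a piecewise polynomial $\chi\in V_{r-1}^k(T^\s)$ with $\m^\ell d\m\we(v-(\ell+1)\gamma)=\m^{\ell+1}\chi$. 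Setting $\psi := \chi + w - d\gamma\in V_{r-1}^k(T^\s)$ then gives $\m^\ell\omega = d(\m^{\ell+1}\gamma) + \m^{\ell+1}\psi$. To see that $\psi$ in fact lies in $V_{d,r-1}^k(T^\s)$, I would note that $\m^{\ell+1}\psi = \m^\ell\omega - d(\m^{\ell+1}\gamma)$ has single-valued traces across the interior faces (since $d\omega\in L^2$ and $\m^{\ell+1}\gamma$ is continuous), and since $\m$ does not vanish identically on any interior face, dividing this single-valued data by $\m^{\ell+1}$ keeps the traces of $\psi$ single-valued.

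The crux is the construction of $\gamma$ with the prescribed face traces. The single-valuedness of $\tr_f v$ from Lemma \ref{lemma2} guarantees that the data $\{\tfrac{1}{\ell+1}\tr_{F_i}v\}_i$ are consistent on the intersections $F_i\cap F_j$, so for $r\ge1$ a suitable $\gamma$ is obtained from the surjectivity of the boundary trace on $\pol_r\La^{k-1}(T)$ (for instance, setting the interior degrees of freedom of Proposition \ref{Prop2} to zero), the remaining difference between $v$ and $(\ell+1)\gamma$ being carried by $\psi$. For $r=0$ the situation is tighter: here $w=0$ and $\chi\in V_{-1}^k(T^\s)=\{0\}$, so no remainder is allowed and the consistent trace data must be realized \emph{exactly} by a constant form. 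I would build $\gamma\in\pol_0\La^{k-1}(T)$ via Lemma \ref{lemma101}, prescribing its degrees of freedom on the $(k-1)$-simplices through $x_0$ to match those of $\tfrac{1}{\ell+1}v$, and then propagate the match to every face $F_i$ by the inductive argument used in the proof of Lemma \ref{lemma101}.

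The main obstacle is exactly this $r=0$ construction, because a constant form has no interior freedom and so the prescribed fluxes must be globally compatible. For $k<n$ consistency of the traces suffices, but for $k=n$ there is a genuine linear relation among the $n+1$ face traces (the divergence-theorem constraint for a constant top-codimension form), which can be matched if and only if $\int_T\m^\ell\omega=0$; this is precisely the extra hypothesis, since integrating $d(\m^{\ell+1}\gamma)$ over $T$ and using $\m|_{\partial T}=0$ forces $\int_T\m^\ell\omega=0$. Checking that single-valuedness really does propagate the trace match from the $x_0$-faces to all of $\partial T$ is the one place where I expect the bookkeeping to be delicate.
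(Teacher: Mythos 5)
Your proposal follows the paper's own proof essentially step for step in the case $r\ge 1$: the decomposition $\omega = d\m\we v + \m w$ from Lemma \ref{lemma2}, the Leibniz identity, the reduction via Lemma \ref{zerotrace} to matching $(\ell+1)\tr_{F_i}\gamma=\tr_{F_i}v$ on the exterior faces, the construction of $\gamma$ by prescribing the boundary degrees of freedom of Proposition \ref{Prop2} (using the single-valuedness of $\tr_f v$ for consistency) and zeroing the interior ones, and the final observation that $\m^{\ell+1}\psi$ has single-valued traces so that $\psi\in V_{d,r-1}^k(T^\s)$. That part is correct and is the paper's argument.

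The one genuine issue is exactly the point you flag as delicate in the $r=0$ case, and your proposed mechanism there does not quite work. The induction in Lemma \ref{lemma101} propagates the match $(\ell+1)\tr_f\gamma=\tr_f v$ only through faces \emph{containing} $x_0$: it yields the match on $F_1,\ldots,F_n$, hence (by Lemma \ref{zerotrace}, with quotient in $\pol_{-1}\La^k=\{0\}$) $\xi:=\omega-(\ell+1)\,d\m\we\gamma$ vanishes on $T_1,\ldots,T_n$. But this induction can never reach $F_0$, which does not contain $x_0$, so the requirement ``$\tr_{F_i}\gamma=\tfrac1{\ell+1}\tr_{F_i}v$ for \emph{every} exterior face'' is not obtained this way for $i=0$. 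The paper closes the argument differently: $\xi\in V_{d,0}^k(T^\s)$ has single-valued traces and vanishes on $T_1,\ldots,T_n$, so its traces vanish on the $n$ interior faces of $T_0$; for $k\le n-1$, the unisolvence statement of Proposition \ref{Prop1} (a constant form with vanishing traces on $n$ of the $n+1$ faces, the interior moments being vacuous) forces $\xi|_{T_0}=0$, while for $k=n$ one instead uses the hypothesis $\int_T\m^\ell\omega=0$ to get $\int_{T_0}\m^\ell\xi=0$ and hence $\xi|_{T_0}=0$. The trace match on $F_0$ is then a \emph{consequence} of $\xi|_{T_0}=0$, not an input. So your outline is the right one, but the ``delicate bookkeeping'' step must be replaced by this unisolvence-on-$T_0$ argument (or the integral constraint when $k=n$) rather than the Lemma \ref{lemma101} induction.
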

\begin{proof}
Let us first consider the case $r \ge 1$. By Lemma \ref{lemma2} we have 
\begin{equation}\label{rep1}
\omega= d \m \we v+ \m  w,
\end{equation}
where $v \in V_r^{k-1}(T^\s)$ and $w \in  V_{r-1}^{k}(T^\s)$. Moreover, $\tr_f v$ is single valued for all $f \in \Delta_s(T)$ 
with $s\ge k-1$.
According to Proposition \ref{Prop2}, there exists a unique $\gamma \in \pol_{r} \La^{k-1}(T)$ such that 
\begin{alignat*}{2}
(\ell+1) \int_f \tr_f \gamma  \we \eta&= \int_f \tr_f v  \we \eta \quad &&\text{ for all } \eta \in \pol_{r+k-1-s}^{-}\La^{s-k+1}(f),\ f \in \Delta_s(T),\ k-1\le s\le n-1,\\
\intertext{and}
\int_T  \gamma  \we \eta&=0  &&\text{ for all } \eta \in \pol_{r+k-1-n}^{-}\La^{n-k+1}(T).
\end{alignat*}
It then follows from Proposition \ref{Prop2} that  $(\ell+1)\tr_F  \gamma =\tr_F v$ for all $F \in \Delta_{n-1}(T)$.   
Hence, by Lemma \ref{zerotrace} we have that 
\begin{equation*}
d \m \we v =(\ell +1) d\m \we \gamma+ \m \phi
\end{equation*}
for some $\phi \in V_{r-1}^k(T^\s)$.
Using this identity and the Leibniz rule
\begin{equation*}
d(\m^{\ell+1} \gamma)=(\ell+1) \m^\ell d\m \we \gamma+ \m^{\ell+1} d \gamma, 
\end{equation*}
we have
\begin{align*}
\mu^\ell (d \mu \wedge v)
 = (\ell+1) \mu^\ell  d \mu\wedge \gamma + \mu^{\ell+1} \phi = d (\mu^{\ell+1}\gamma) - \mu^{\ell+1} d \gamma+ \mu^{\ell+1} \phi.
\end{align*}
It then follows from \eqref{rep1} that
\begin{equation*}
\m^\ell\omega=d(\m^{\ell+1} \gamma)+ \m^{\ell+1} \psi 
\end{equation*}
with $\psi=-d \gamma+\phi+w\in V_{r-1}^k(T^\s)$. 
{Finally, since $\tr_f (\m^\ell \omega)$ and
$\tr_f \big(d (\mu^{\ell+1}\gamma))$ are single-valued 
on $f\in \Delta_s(T^\s)$ for $s\ge k$,
we conclude that $\tr_f(\mu^{\ell+1} \psi)$ is single-valued.
Therefore $\psi\in V_{d,r-1}^k(T^\s)$.}
This proves the result in the case $r \ge 1$.  

For the case $r=0$, we  have \eqref{rep1} with $w =0$.  Applying Lemma \ref{lemma101},
we uniquely determine 
$\gamma \in \pol_{0} \La^{k-1}(T)$ by the conditions
\begin{equation*}
(\ell+1) \int_f \tr_f \gamma = \int_f \tr_f v  \qquad \text{ for all } f \in S_{k-1}(T,x_0).
\end{equation*}
In this way, applying Proposition \ref{Prop2}
we have  $(\ell+1)\tr_F  \gamma =\tr_F v$ for all $F \in S_{n-1}(T,x_0)$.   Hence, by Lemma \ref{zerotrace} we have that 
\begin{equation*}
\xi=0 \qquad  \text{ on } T_i, 1 \le i \le n,
\end{equation*}
where $\xi:=\omega -(\ell +1) d \m \we \gamma \in V_{d, 0}^{k}(T^z)$. Hence, $\tr_F \xi=0$ for all $F \in \Delta_{n-1}(T_0)$. If $k \le n-1$, we apply
Proposition \ref{Prop2} to get $\xi=0$ on $T_0$, and therefore $\xi=0$ on $T$.
If $k=n$, we use the assumption that $\int_T \m^\ell \omega=0$ to obtain 
\begin{equation*}
0=\int_T (\m^\ell \omega-d(\m^{\ell+1} \gamma))= \int_T \m^\ell \xi=\int_{T_0} \m^\ell \xi.
\end{equation*}
This implies that $\xi=0$ on $T_0$ and hence $\xi=0$ on all of $T$. 
Finally, we finish the proof by
applying the product rule:
\begin{equation*}
d(\m^{ \ell+1} \gamma) =(\ell+1) \m^\ell d\mu \we \gamma=\m^\ell \omega.
\end{equation*}
\end{proof}



\subsection{Proof of Theorem \ref{mainthm}}

Let $\omega \in \mathring{V}_{d,r}^k(T^\s)$ and $d\omega=0$. Assume we have found $ \gamma_{r}, \ldots, \gamma_{r-j}$
with $\gamma_{\ell} \in \pol_{\ell} \La^{k-1}(T)$  
and $\omega_{r-(j+1)} \in  V_{d,r-(j+1)}^k(T^\s)$ such that 
\begin{equation*}
\omega=d(\m \gamma_r+ \m^2 \gamma_{r-1}+ \cdots+\m^{j+1} \gamma_{r-j}) + \m^{j+1} \omega_{r-(j+1)}.
\end{equation*}
 Then, we see that 
 \begin{equation*}
 0=d(\m^{j+1} \omega_{r-(j+1)})= \m^j(\m d\omega_{r-(j+1)} + (j+1) d \m \we \omega_{r-(j+1)}),
 \end{equation*}
 which implies that $\m d\omega_{r-(j+1)} + (j+1) d \m \we \omega_{r-(j+1)}=0$ on $T$. Hence, we have that $d \m \we \omega_{r-(j+1)}=0$ on $\partial T$. Using Lemma \ref{zerotrace} we have  $\tr_{F} \omega_{r-(j+1)}=0$ for all $F \in \Delta_{n-1}(T)$. Or in other words, $\omega_{r-(j+1)} \in  \mathring{V}_{d,r-(j+1)}^k(T^\s)$.
We then apply Lemma \ref{crucial} to get
\begin{equation*}
\m^{j+1} \omega_{r-(j+1)}=d(\m^{j+2} \gamma_{r-(j+1)})+ \m^{j+2} \omega_{r-(j+2)},
\end{equation*} 
where  $\gamma_{r-(j+1)} \in \pol_{r-(j+1)}\La^{k-1}(T)$ and $\omega_{r-(j+2)} \in V_{d,r-(j+2)}^k(T^\s)$. 
It follows that
\begin{equation*}
\omega=d(\m \gamma_r+ \m^2 \gamma_{r-1}+ \cdots+\m^{j+1} \gamma_{r-j}+ \m^{j+2} \gamma_{r-(j+1)}) + \m^{j+2} \omega_{r-(j+2)}.
\end{equation*}
Continuing by induction we have
\begin{equation*}
\omega=d(\m \gamma_r+ \m^2 \gamma_{r-1}+ \cdots+\m^{r} \gamma_{1})+ \m^r \omega_0.
\end{equation*}
Note that if $k=n$, then we have $\int_{T} \omega=0$ and so $\int_T \m^r \omega_0=0$. We can apply Lemma \ref{crucial} to write 
\begin{equation*}
\m^r \omega_0= d(\m^{r+1} \gamma_0),
\end{equation*}
for some $\gamma_0 \in \pol_0 \La^k(T)$. This completes the proof. \hfill $\qed$

\subsection{Proof of Theorem \ref{cor1}}
Let $\omega \in V_{d,r}^k(T^\s)$ with $d \omega=0$. We will consider the case $r \ge 1$ first.  Define $\Pi \omega \in \pol_r \La^k(T)$ such that 
\begin{equation*}
\int_f \tr_f \Pi \omega  \we \eta_f =  \int_f \tr_f \omega  \we \eta_f \quad \text{ for all } \eta_f \in \pol_{r+k-s}^{-}\La^{s-k}(f),\ f \in \Delta_s(T),\ s \ge k.
\end{equation*}
This is the canonical projection; see \cite{AFW06}. Then using standard results in \cite{AFW06} it holds that $d(\Pi \omega)=0$ since $d \omega=0$, and moreover, if $k=n$, $\int_{T} \Pi \omega=\int_T \omega$.  
Therefore, $\xi \coloneqq \omega-\Pi \omega$ satisfies $d \xi=0$.
If $k\le n-1$, then there holds $\tr_F \xi=0$ for all $F \in \Delta_{n-1}(T)$,
whereas if $k=n$, then  $\int_T \xi =0$.  Thus $\xi\in \mathring{V}_{d,r}^k(T^\s)$,
and so, by Theorem \ref{mainthm}, there exists $\varphi  \in \mathring{M}_{r+1}^{k-1}(T^z)$   such that $d\varphi= \xi$.
Using the exact sequence property of  $\{\pol_r \La^k(T)\}$ there exists $\psi \in \pol_{r+1} \La^{k-1}(T)$ such that 
$d\psi= \Pi \omega$.  Setting $\rho = \varphi +\psi\in M^{k-1}_{d,r+1}(T^\s)$, we have $d\rho= \omega$.

Now consider the case $r=0$. Applying Lemma \ref{lemma101}, we define $\Pi\omega\in \pol_0\Lambda^k(T)$
uniquely by the conditions
\begin{equation*}
\int_f \tr_f \Pi \omega   =  \int_f \tr_f \omega  \quad \text{ for all } f \in S_k(T,x_0).
\end{equation*}
Let $\xi= \omega -\Pi \omega$ then by  Lemma \ref{lemma101} have that $\tr_{F_i} \xi= 0$ for $1 \le i \le n$. . Consider the case, $k \le n-1$. Consider an arbitrary $k$ sub-simplex   $f \in \Delta_k(F_0)$ then it also belongs to another $f \in \Delta_k(F_j)$ for another $j \neq 0$. Hence, $\tr_f \xi=0$. Applying Lemma  \ref{lemma101} once more we have that $\tr_{F_0} \xi=0$.  Hence, we have that
 $\tr_F \xi=0$ for all $F \in \Delta_{n-1}(T)$.  Moreover, if $k =n$ we see that $\int_{T} \xi =0$.  Therefore, $\xi \in \mathring{V}_{d,0}^k(T^\s)$.  Hence, using  Theorem \ref{mainthm} we have a $\rho \in \mathring{M}_{1}^{k-1}(T^\s)$   such that $d\rho= \xi$.  Again, using the exact sequence property of  $\{\pol_r \La^k(T)\}$ there exists $\psi \in \pol_{1} \La^{k-1}(T)$ such that 
$d\psi= \Pi \omega$, and  hence $d(\rho+ \psi)= \omega$. \hfill $\qed$


\subsection{Proof of Corollary \ref{cor2}}
Let $\omega \in \mathring{M}_{r}^k(T^z) \subset \mathring{V}_{d,r}^k(T^z)$ and $d\omega=0$. Theorem \ref{mainthm} gives a $\rho \in \mathring{M}_{r+1}^{k-1}(T^z)$ such that $d \rho=\omega$ and therefore $d \rho$ is continuous and vanishes on $\partial T$. In other words, $\rho \in \mathring{M}_{d,r+1}^{k-1}(T^z)$. \hfill $\qed$

\subsection{Proof of Corollary \ref{cor3}}
The result follows from Corollary \ref{cor2} by noting that $\mathring{M}_{d,r}^k(T^z)  \subset \mathring{M}_{r}^k(T^z)$.
\hfill $\qed$


\subsection{Proof of Corollary \ref{cor4}}
We consider the following sequences:
\begin{align*}
\begin{array}{ccccccccccccc}
&\cdots& \stackrel{d}{\xrightarrow{\hspace*{0.5cm}}}&
M_{d,r+1}^{k-1}(T^\s)\!\!\!
&\stackrel{d}{\xrightarrow{\hspace*{0.5cm}}}&
\!\!\! M^k_{d,r}(T^\s)
\!\!\!&\stackrel{d}{\xrightarrow{\hspace*{0.5cm}}}&
\!\!\!M^{k+1}_{r-1}(T^\s)
\!\!\!\!& 
\stackrel{d}{\xrightarrow{\hspace*{0.5cm}}}& 
\!\!\! V^{k+2}_{d,r-2}(T^\s)
&
\!\!\!\!& 
\stackrel{d}{\xrightarrow{\hspace*{0.5cm}}}& 
\cdots\\
&\cdots& \stackrel{d}{\xrightarrow{\hspace*{0.5cm}}}&
M_{d,r+1}^{k-1}(T^\s)\!\!\!
&\stackrel{d}{\xrightarrow{\hspace*{0.5cm}}}&
\!\!\! M^k_{r}(T^\s)
\!\!\!&\stackrel{d}{\xrightarrow{\hspace*{0.5cm}}}&
\!\!\!V^{k+1}_{d,r-1}(T^\s)
\!\!\!\!& 
\stackrel{d}{\xrightarrow{\hspace*{0.5cm}}}& 
\!\!\! V^{k+2}_{d,r-2}(T^\s)
&
\!\!\!\!& 
\stackrel{d}{\xrightarrow{\hspace*{0.5cm}}}& 
\cdots
\end{array}
\end{align*}
Theorem \ref{cor1} and the results
in \cite{AFW06} show that both sequences
are exact, i.e., the range of each map is the kernel
of the succeeding map.

Denote by
\begin{align*}
\ker M_{d,r}^k(T^\s) &= \{\omega\in M_{d,r}^k(T^\s):\ d\omega=0\},\\
\range  M_{d,r}^k(T^\s) & = \{d\omega:\ \omega\in M_{d,r}^k(T^\s)\}.
\end{align*}
The rank-nullity theorem shows that
\begin{align}\label{eqn:RNT}
\dim M_{d,r}^k(T^\s) = \dim \ker M_{d,r}^k(T^\s) +\dim \range M_{d,r}^k(T^\s),
\end{align}
and the exactness of the first sequence gives
\begin{align*}
\dim \range M_{d,r}^k(T^\s) 
& = \dim \ker M_{r-1}^{k+1}(T^\s)\\
&= \dim M_{r-1}^{k+1}(T^\s) - \dim \range M_{r-1}^{k+1}(T^\s) \\
&= \dim M_{r-1}^{k+1}(T^\s) - \dim \ker V_{d,r-2}^{k+2}(T^\s).
\end{align*}
On the other hand, we have, by the exactness of the second sequence,
\begin{align*}
\dim \ker M_{d,r}^k(T^\s) 
&= \dim \ker M_{r}^k(T^\s) = \dim M_r^k(T^\s) - \dim \range M_r^k(T^\s)\\
& = \dim M_r^k(T^\s) - \dim \ker V_{d,r-1}^{k+1}(T^\s)\\
& = \dim M_r^k(T^\s) - \big(\dim V_{d,r-1}^{k+1}(T^\s) - \dim \range V_{d,r-1}^{k+1}(T^\s)\big)\\
& = \dim M_r^k(T^\s) - \big(\dim V_{d,r-1}^{k+1}(T^\s) - \dim \ker V_{d,r-2}^{k+2}(T^\s)\big).
\end{align*}
Applying these identities to \eqref{eqn:RNT},
we find that
\begin{align*}
\dim M_{d,r}^k(T^\s) 
&= \dim M_{r-1}^{k+1}(T^\s) - \dim \ker V_{d,r-2}^{k+2}(T^\s) \\
&\quad + \dim M_r^k(T^\s) - \dim V_{d,r-1}^{k+1}(T^\s) + \dim \ker V_{d,r-2}^{k+2}(T^\s)\\
& = \dim M_{r-1}^{k+1}(T^\s) + \dim M_r^k(T^\s) - \dim V_{d,r-1}^{k+1}(T^\s).
\end{align*}
The dimension count \eqref{eqn:DimCount2} is obtained similarly.
This concludes the proof.
\hfill $\qed$


\section{Local Smooth Finite Element de Rham Complexes in three dimensions}\label{sec-Smooth3D}
In this section we translate some of the results of Section \ref{section-3}
 in three dimensions ($n=3$) using vector proxies. 
 Namely, we reprove the results using vector notation
and standard differential operators for the benefit of the readers
that are more comfortable with vector calculus notation.
Moreover, we define local de Rham complexes in three dimension 
with enhanced smoothness and provide unisolvent sets of degrees of freedom.   The last two spaces in 
one of the sequences
correspond to the divergence-free velocity and pressure Stokes elements developed by Zhang \cite{Zhang04}.
The complex we propose characterize the divergence-free subspace of the discrete velocity 
space as well as show the relationship between the Stokes pair and the $C^1$
Clough-Tocher element \cite{Alfeld84}.

We start by translating our spaces using vector notation
by identifying $0$- and $3$-forms with scalar functions,
and $1$- and $2$-forms with vector-valued functions.  With a slight abuse of notation,
we set
\begin{alignat*}{2}
V_r^3(T^\s)=V_r^0(T^\s)=&\{ \omega \in L^2(T):   \omega|_{T_i} \in \pol_r (T_i) \text{ for } 0 \le i \le 3 \}, \\
V_r^1(T^\s)= V_r^2(T^\s)= &[V_r^0(T^\s)]^3.
\end{alignat*}
We then define
\begin{alignat*}{1}
V_{d,r}^0(T^\s)=& \{ \omega \in V_r^0(T^\s): \grad \omega \in [L^2(T)]^3\}, \\ 
V_{d,r}^1(T^\s)=& \{ \omega \in V_r^1(T^\s): \curl \omega \in [L^2(T)]^3 \},\\
V_{d,r}^2(T^\s)=&  \{ \omega \in V_r^2(T^\s): \dive \omega \in L^2(T) \}, \\
V_{d,r}^3(T^\s)=& V_r^3(T^\s),
\end{alignat*}
where the differential operators appearing in these definitions
are understood to be in the weak sense.
We further set 
\begin{alignat*}{2}
M_r^3(T^\s)=M_r^0(T^\s)=&\{ \omega \in C^0(T):   \omega|_{T_i} \in \pol_r (T_i) \text{ for } 0 \le i \le 3 \}, \\
M_r^1(T^\s)= M_r^2(T^\s)= &[M_r^0(T^\s)]^3,
\end{alignat*}
and 
\begin{alignat*}{1}
M_{d,r}^0(T^\s)=& \{ \omega \in M_r^0(T^\s): \grad \omega \in [C^0(T)]^3\}, \\ 
M_{d,r}^1(T^\s)=& \{ \omega \in M_r^1(T^\s): \curl \omega \in [C^0(T)]^3 \},\\
M_{d,r}^2(T^\s)=&  \{ \omega \in M_r^2(T^\s): \dive \omega \in C^0(T) \}, \\
M_{d,r}^3(T^\s)=& M_{r}^3(T^\s).
\end{alignat*}
The spaces with  homogenous boundary conditions are given by
\begin{alignat*}{1}
\mathring{V}_{d,r}^0(T^\s)=& \{ \omega \in V_{d,r}^0(T^\s): \omega|_F=0 \text{ for all } F \in \Delta_{2}(T)\}, \\ 
\mathring{V}_{d,r}^1(T^\s)=& \{ \omega \in V_{d,r}^1(T^\s): \omega \times n_F|_F=0  \text{ for all } F \in \Delta_{2}(T)\},\\
\mathring{V}_{d,r}^2(T^\s)=&  \{ \omega \in V_{d,r}^2(T^\s): \omega \cdot n_F|_F=0, \text{ for all } F \in \Delta_{2}(T) \}, \\
\mathring{V}_{d,r}^3(T^\s)=&\{ \omega \in  V_{d,r}^3(T^\s): \int_T \omega\, \mathrm{dx}=0 \}.
\end{alignat*}
Here, $n_F$ is unit normal pointing out of $F$.

For $0 \le k  \le 2$ we define
\begin{alignat*}{1}
\mathring{M}_{r}^k(T^\s)=&\{ \omega \in M_{r}^k(T^\s): \omega|_F=0 \text{ for all } F \in  \Delta_{2}(T)\}, \\ 
\end{alignat*}
and for $k=3$,
\begin{alignat*}{1}
\mathring{M}_{r}^3(T^\s)=&\{ \omega \in M_{r}^3(T^\s): \omega|_F=0 \text{ for all } F \in  \Delta_{2}(T), \int_T \omega=0\}.  
\end{alignat*}
Finally, we define 
\begin{alignat*}{1}
\mathring{M}_{d,r}^0(T^\s)=& \{ \omega \in M_{d,r}^0(T^\s) \cap \mathring{M}_{r}^0(T^\s): \grad \omega|_F=0   \text{ for all } F \in \Delta_{2}(T)\}, \\ 
\mathring{M}_{d,r}^1(T^\s)=& \{ \omega \in M_{d,r}^1(T^\s) \cap \mathring{M}_{r}^1(T^\s): \curl \omega|_F=0  \text{ for all } F \in \Delta_{2}(T)\},\\
\mathring{M}_{d,r}^2(T^\s)=&  \{ \omega \in M_{d,r}^2(T^\s) \cap \mathring{M}_{r}^2(T^\s): \dive \omega |_F=0, \text{ for all } F \in \Delta_{2}(T) \}, \\
\mathring{M}_{d,r}^3(T^\s)=&\mathring{M}_{r}^3(T^\s).
\end{alignat*}

Note that $\mathring{V}_{d,r}^0(T^\s)$ is the $H^1_0$-conforming Lagrange finite element space, 
$\mathring{V}_{d,r}^1(T^\s)$ is the $H_0({\rm curl})$-conforming Nedelec
space of second type, and $\mathring{V}_{d,r}^2(T^\s)$ is the $H_0({\rm div})$-conforming
Nedelec space of the second type.  The space $\mathring{V}_{d,r}^3(T^\s)$ is simply
the space of piecewise polynomials with vanishing mean, but without any continuity restrictions.
Together, these canonical finite element spaces form an exact
discrete de Rham complex:
\begin{align}\label{eqn:FEMStandardDeRham}
\begin{array}{ccccccccccccc}
&0 & {\xrightarrow{\hspace*{0.1cm}}}&
\mathring{V}_{d,r}^0(T^\s)\!\!\!
&\stackrel{\grad}{\xrightarrow{\hspace*{0.5cm}}}&
\!\!\! 
\mathring{V}_{d,r-1}^1(T^\s)
\!\!\!&\stackrel{\curl}{\xrightarrow{\hspace*{0.5cm}}}&
\mathring{V}_{d,r-2}^2(T^\s)
\!\!\!\!& \stackrel{{\rm div}}{\xrightarrow{\hspace*{0.5cm}}}& 
\!\!\!
\mathring{V}_{d,r-3}^3(T^\s)
&
\!\!\!\xrightarrow{\hspace*{0.1cm}}  &0.
\end{array}
\end{align}

Theorems \ref{mainthm}--\ref{cor1} and Corollaries  \ref{cor2} -- \ref{cor4} hold with $d$ being one of the differential operators $\grad, \curl, \dive$. 
Essentially these results show that any discrete space in \eqref{eqn:FEMStandardDeRham}
can be replaced by its continuous analogue, and the exactness property will still be preserved
provided that the spaces to the left of the replacement are modified accordingly.  
In particular, Theorems \ref{mainthm}--\ref{cor2}
and Corollaries \ref{cor3}--\ref{cor4} show that
the following sequences are exact:
\begin{subequations}
\label{eqn:AllSequences}
\begin{alignat}{5}
\label{eqn:AllSequences1}
&0\
{\xrightarrow{\hspace*{0.5cm}}}\
\mathring{M}_{d,r}^0(T^\s)\
&&\stackrel{\grad}{\xrightarrow{\hspace*{0.5cm}}}\
\mathring{M}_{d,r-1}^1(T^\s)\
&&\stackrel{\curl}{\xrightarrow{\hspace*{0.5cm}}}\
\mathring{M}_{d,r-2}^2(T^\s)\
&&\stackrel{{\rm div}}{\xrightarrow{\hspace*{0.5cm}}}\
\mathring{M}_{d,r-3}^3(T^\s)\
&&\xrightarrow{\hspace*{0.5cm}}\
 0,\\
\label{eqn:AllSequences2}
&0 \
{\xrightarrow{\hspace*{0.5cm}}}\
\mathring{M}_{d,r}^0(T^\s)\
&&\stackrel{\grad}{\xrightarrow{\hspace*{0.5cm}}}\
\mathring{M}_{d,r-1}^1(T^\s)\
&&\stackrel{\curl}{\xrightarrow{\hspace*{0.5cm}}}\
\mathring{M}_{r-2}^2(T^\s)\
&&\stackrel{{\rm div}}{\xrightarrow{\hspace*{0.5cm}}}\
\mathring{V}_{d,r-3}^3(T^\s)\
&&\xrightarrow{\hspace*{0.5cm}}\
0,\\
\label{eqn:AllSequences3}
&0\
{\xrightarrow{\hspace*{0.5cm}}}\
\mathring{M}_{d,r}^0(T^\s)\
&&\stackrel{\grad}{\xrightarrow{\hspace*{0.5cm}}}\
\mathring{M}_{r-1}^1(T^\s)\
&&\stackrel{\curl}{\xrightarrow{\hspace*{0.5cm}}}\
\mathring{V}_{d,r-2}^2(T^\s)\
&&\stackrel{{\rm div}}{\xrightarrow{\hspace*{0.5cm}}}\
\mathring{V}_{d,r-3}^3(T^\s)\
&&\xrightarrow{\hspace*{0.5cm}}\  
0.
\end{alignat}
\end{subequations}
For example, the exactness of the third sequence means:
\begin{subequations}\label{vector}
\begin{alignat}{1}
&\text{ If } \omega \in \mathring{M}_{r-1}^1(T^\s) \text{ and } \curl \omega=0,  \text{ there exists } \rho \in \mathring{M}_{d,r}^0(T^\s)  \text{ such that }  
\grad \rho=\omega. \label{vector1} \\
&\text{ If } \omega \in \mathring{V}_{d,r-2}^2(T^\s) \text{ and } \dive \omega=0,  \text{ there exists } \rho \in \mathring{M}_{r-1}^1(T^\s)  \text{ such that }  
\curl \rho=\omega. \label{vector2} \\
& \text{ If } \omega \in \mathring{V}_{d,r-3}^3(T^\s),  \text{ there exists } \rho \in \mathring{V}_{d,r-2}^2(T^\s)  \text{ such that }  
\dive \rho=\omega.  \label{vector3}
\end{alignat}
\end{subequations}
Note that \eqref{vector2} is the main contribution among the three: Property \eqref{vector3}
follows from the exactness of \eqref{eqn:FEMStandardDeRham},
and if $\omega\in \mathring{M}_{r-1}^1(T^\s)\subset \mathring{V}_{d,r-1}^1(T^\s)$ satisfies $\curl \omega=0$,
then the exactness of \eqref{eqn:FEMStandardDeRham} implies that $\omega = \grad \rho$
for some $\rho\in V_{d,r}^0(T^\s)$.  By definition, we get $\rho\in \mathring{M}_{d,r}^0(T^\s)$,
i.e., property \eqref{vector1}.

Although we have proved these results in the previous section using the language of differential forms, 
we will give a sketch a proof \eqref{vector2} using vector notation for 
the benefit of those readers that feel more comfortable with vector notation.   We start by giving an instance of Lemma \ref{lemma1}.

\begin{lemma}\label{lemma111}
Let $T=[x_0, x_1, x_2, x_3]$. Suppose that $\omega \in [\pol_r(T)]^3$ with $\omega \cdot n_{F_i}=0$ on $F_i$. Then 
\begin{equation*}
\omega= \grad \la_i \times v+\la_i w,
\end{equation*}
where $v \in [\pol_r(T)]^3$ and  $w \in [\pol_{r-1}(T)]^3$.
\end{lemma}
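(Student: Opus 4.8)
The plan is to carry out, in vector-proxy language, exactly the argument behind Lemma \ref{lemma1} specialized to $n=3$ and $k=2$; this keeps the proof self-contained for a reader working with $\grad$, $\curl$, and $\dive$. Without loss of generality I would take $i=3$, the remaining cases following by relabeling the vertices. The first step is to expand $\omega$ in the constant frame $\grad\la_2\times\grad\la_3$, $\grad\la_3\times\grad\la_1$, $\grad\la_1\times\grad\la_2$, which is pointwise a basis of $\R^3$ because $\grad\la_1,\grad\la_2,\grad\la_3$ are linearly independent (they are dual to the edge vectors $t_j=x_j-x_0$ via $\grad\la_i\cdot t_j=\delta_{ij}$). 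Writing
\[
\omega = a_1\,\grad\la_2\times\grad\la_3 + a_2\,\grad\la_3\times\grad\la_1 + a_3\,\grad\la_1\times\grad\la_2,
\]
the coefficients $a_1,a_2,a_3$ belong to $\pol_r(T)$ since the frame vectors are constant.

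Next I would convert the boundary condition $\omega\cdot n_{F_3}=0$ on $F_3$ into a statement about a single coefficient. Because $t_1$ and $t_2$ span the tangent plane of $F_3=[x_0,x_1,x_2]$, the vector $t_1\times t_2$ is parallel to $n_{F_3}$; using the scalar triple product $\omega\cdot(t_1\times t_2)=\det[\,\omega,\,t_1,\,t_2\,]$ together with $\grad\la_i\cdot t_j=\delta_{ij}$, a direct computation collapses the expansion to $\omega\cdot(t_1\times t_2)=a_3$. Hence $\omega\cdot n_{F_3}=0$ on $F_3$ is equivalent to $a_3$ vanishing on the plane $\{\la_3=0\}$; since $\la_3$ is affine with this plane as its zero set, it follows that $\la_3\mid a_3$, say $a_3=\la_3 b_3$ with $b_3\in\pol_{r-1}(T)$.

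Finally I would regroup the three terms. Using $\grad\la_2\times\grad\la_3=-\grad\la_3\times\grad\la_2$, the first two terms combine into $\grad\la_3\times v$ with $v=a_2\,\grad\la_1-a_1\,\grad\la_2\in[\pol_r(T)]^3$, while the third becomes $\la_3 w$ with $w=b_3\,\grad\la_1\times\grad\la_2\in[\pol_{r-1}(T)]^3$, yielding $\omega=\grad\la_3\times v+\la_3 w$ as claimed. I expect the only delicate point to be the middle step that turns the geometric normal-trace condition into the algebraic identity $a_3|_{F_3}=0$; once the frame and the triple-product identity are in place, the factorization and regrouping are routine.
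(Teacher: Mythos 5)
Your proposal is correct and follows essentially the same route as the paper's proof: expand $\omega$ in the constant frame of cross products $\grad\la_i\times\grad\la_j$, use the normal-trace condition on $F_3$ to show the coefficient of $\grad\la_1\times\grad\la_2$ vanishes on $\{\la_3=0\}$ and hence is divisible by $\la_3$, then regroup. The only cosmetic difference is that the paper tests the normal component against $\grad\la_3$ (which is parallel to $n_{F_3}$), while you test against $t_1\times t_2$ and invoke the duality $\grad\la_i\cdot t_j=\delta_{ij}$; both reduce the trace condition to the same algebraic identity $a_3|_{F_3}=0$.
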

\begin{proof}
With out loss of generality we assume that $i=3$. Then it is easy to see that 
\begin{equation}\label{eqn:omega3DExpansion}
\omega=a_1\, \grad \la_2 \times \grad \la_3+a_2 \, \grad \la_1 \times \grad \la_3+  a_3 \, \grad \la_1 \times \grad \la_2,
\end{equation}
where $a_1, a_2, a_3 \in \pol_r(T)$. Since $\omega \cdot n_{F_3}=0$ on $F_3$
and $\grad \la_3$ is parallel to $n_{F_3}$, we have  $\omega \cdot \grad \la_3=0$ on $F_3$.
Applying this identity to \eqref{eqn:omega3DExpansion}, we have
\begin{equation*}
a_3  (\grad \la_1 \times \grad \la_2) \cdot \grad \la_3=0 \qquad \text{ on } F_3.
\end{equation*}
This implies that $a_3=0$ on $F_3$, or equivalently, that $a_3=\la_3 b$ for some  $b \in \pol_{r-1}(T)$. The result now follows if we let 
$v=-a_1 \, \grad \la_2-a_2\, \grad \la_1$ and $w= b\,  \grad \la_1 \times \grad \la_2$.
\end{proof}

Next we state an instance of Lemma \ref{lemma2}.
\begin{lemma}
Any $\omega \in \mathring{V}_{d,r}^2(T^\s)$ satisfies 
\begin{equation}\label{rep0}
\omega= \grad \m \times v+ \m  w
\end{equation}
for some $v \in V_r^{1}(T^\s)$ and $w \in  V_{r-1}^{2}(T^\s)$. Moreover, $v \cdot t_e$ is single-valued on all edges of $e$ of $T$, where
 $t_e$ is a unit tangent vector to $e$.
\end{lemma}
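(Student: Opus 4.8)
The plan is to transcribe the proof of Lemma \ref{lemma2} into vector-proxy language, using Lemma \ref{lemma111} in place of Lemma \ref{lemma1}. First I would establish the representation \eqref{rep0} one sub-tetrahedron at a time. On each $T_i\in T^\s$ the only face lying on $\partial T$ is $F_i$, and the defining boundary condition of $\mathring{V}_{d,r}^2(T^\s)$ gives $\omega\cdot n_{F_i}=0$ on $F_i$. Applying Lemma \ref{lemma111} on $T_i$ produces $\omega|_{T_i}=\grad\la_i\times v_i+\la_i w_i$ with $v_i\in[\pol_r(T_i)]^3$ and $w_i\in[\pol_{r-1}(T_i)]^3$. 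Since $\m_i=(n+1)\la_i=4\la_i$ for $n=3$, rescaling $v_i,w_i$ by $1/4$ and assembling the pieces yields $v\in V_r^1(T^\s)$ and $w\in V_{r-1}^2(T^\s)$ with $\omega=\grad\m\times v+\m w$ on all of $T$.

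Next I would prove that $v\cdot t_e$ is single-valued across every edge $e$ of $T$. Fix $e=[x_a,x_b]\in\Delta_1(T)$ with unit tangent $t_e$. Exactly two sub-tetrahedra $K_1,K_2$ contain $e$ (those $T_i$ with $i\notin\{a,b\}$), and both also contain the triangle $e'=[z,x_a,x_b]$ as a face. Set $t=z-x_a$, so that $\{t,t_e\}$ spans the tangent plane of $e'$. Because $e'\in\Delta_2(K_1)\cap\Delta_2(K_2)$ and $\omega\in V_{d,r}^2(T^\s)$, Proposition \ref{prop201} guarantees that the $2$-form trace of $\omega$ on $e'$ is single-valued; under the proxy identification this is precisely the quantity $\omega_x\cdot(t\times t_e)$ for $x\in e'$, independent of whether $K_1$ or $K_2$ is used.

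The key computation then extracts $v\cdot t_e$ from this single-valued quantity. On $e\subset\partial T$ we have $\m=0$, since $\m\in H^1_0(T)$, so \eqref{rep0} reduces there to $\omega_x=(\grad\m\times v)_x$. Using the identity $(\boldsymbol a\times \boldsymbol b)\cdot(\boldsymbol c\times \boldsymbol d)=(\boldsymbol a\cdot \boldsymbol c)(\boldsymbol b\cdot \boldsymbol d)-(\boldsymbol a\cdot \boldsymbol d)(\boldsymbol b\cdot \boldsymbol c)$ together with $\grad\m\cdot t=\m(z)-\m(x_a)=1$ and $\grad\m\cdot t_e=\m(x_b)-\m(x_a)=0$ (both computed as directional derivatives of the affine function $\m|_{K_i}$ between vertices of $K_i$), I obtain $\omega_x\cdot(t\times t_e)=v_x\cdot t_e$ on $e$. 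Since the left-hand side is single-valued, so is $v\cdot t_e$, as claimed.

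I do not expect a genuine obstacle here: the argument is a faithful specialization of Lemma \ref{lemma2}. The only points requiring care are the proxy bookkeeping—recognizing the $2$-form trace on $e'$ as $\omega\cdot(t\times t_e)$—and the verification that $\grad\m\cdot t=1$ while $\grad\m\cdot t_e=0$, both of which follow at once from the fact that $z,x_a,x_b$ are vertices of each $K_i$ on which $\m$ is affine. The introduction of the auxiliary triangle $e'=[z,x_a,x_b]$, which plays the role of the simplex $f'$ in Lemma \ref{lemma2}, is what makes the single-valuedness of $\omega$ available in a direction transverse to $e$.
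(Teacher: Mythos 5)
Your proof is correct and takes essentially the same route as the paper: the representation \eqref{rep0} comes from applying Lemma \ref{lemma111} on each $T_i$ with the rescaling $\m_i = 4\la_i$, and the single-valuedness of $v\cdot t_e$ is deduced from the continuity of the normal component of $\omega$ across the internal face $[z,x_a,x_b]$ combined with $\m=0$ on $e$. The only difference is cosmetic: you contract $\omega=\grad\m\times v$ with $t\times t_e$ and use the Lagrange identity together with $\grad\m\cdot t=1$, $\grad\m\cdot t_e=0$ (faithfully transcribing the paper's proof of Lemma \ref{lemma2}), whereas the paper's vector-proxy proof rearranges the triple product $(\grad\m\times v)\cdot n$ and invokes the fact that $\grad\m\times n$ is single-valued and parallel to $t_e$ --- the two computations are equivalent.
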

\begin{proof}
First by Lemma \ref{lemma111} we see that $\omega$ has the form \eqref{rep0}. Thus, to complete the proof, we must show that $v \cdot t_e$ is single-valued on all edges of $e$ of $T$. 

To this end,  let $e$ be an edge of $T$. Let $T_1,T_2 \in T^\s$ such that they have a common (internal) face $F$ and that 
$e$ is an edge of the face $F$. Let $n$ be a unit normal vector to the face $F$. Since the tangential components of $\grad \m$ on $F$ are single-valued we have that
\begin{equation}\label{aux316}
\grad \m|_{T_1}= \grad \m|_{T_2} +a n
\end{equation}
for a constant $a$.  Since $\omega \in \mathring{V}_{d,r}^2(T^\s)$ it must be that
\begin{equation*}
\omega|_{T_1} \cdot n=\omega|_{T_2} \cdot n \qquad \text{ on } F.
\end{equation*}
In particular, if we use that $\mu=0$ on $e$, we have
\begin{equation*}
(\grad \m |_{T_1} \times v|_{T_1}) \cdot n=(\grad \m|_{T_2} \times v |_{T_2}) \cdot n \quad \text{ on } e.
\end{equation*}
Therefore,
\begin{equation*}
(\grad \m |_{T_1} \times n ) \cdot v|_{T_1} =(\grad \m|_{T_2} \times n) \cdot v |_{T_2}  \quad \text{ on } e.
\end{equation*}
By \eqref{aux316},  $(\grad \m |_{T_1} \times n ) =(\grad \m|_{T_2} \times n)$ which is parallel  to $t_e$. This proves the result.  
\end{proof}

We now prove an instance of Lemma \ref{crucial}.
\begin{lemma}\label{lemmacrucial}
Let $\omega \in \mathring{V}_{d,r}^2(T^\s)$ and  let $\ell \ge 0$ be an integer. There exists  $\gamma \in [\pol_{r}(T)]^3$ and $\psi \in V_{d,r-1}^2(T^\s)$  (in the case $r=0$, $\psi  \equiv 0$) such that 
\begin{equation}\label{eq1111}
\m^{\ell} \omega=\curl (\m^{\ell+1} \gamma)+ \m^{\ell+1} \psi.
\end{equation}
\end{lemma}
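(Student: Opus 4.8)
The plan is to transcribe the proof of Lemma~\ref{crucial}, specialized to $k=2$, $n=3$, and $d=\curl$, into vector notation. First I would apply the preceding lemma to write $\omega$ in the form \eqref{rep0},
\[
\omega = \grad\m\times v + \m w,
\]
with $v\in V_r^1(T^\s)$ and $w\in V_{r-1}^2(T^\s)$ (so that $w=0$ when $r=0$). The crucial structural input is that the tangential components of $v$ are single-valued across the split: along the edges of $T$ this is exactly the conclusion of the preceding lemma, and on the faces of $T$ it follows from Lemma~\ref{lemma2}. These single-valuedness properties are precisely what makes the moments of $v$ used below unambiguous.

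Next, for $r\ge 1$ I would invoke Proposition~\ref{Prop2} to construct a \emph{single} global polynomial vector field $\gamma\in[\pol_r(T)]^3$ whose edge, face, and interior moments reproduce $(\ell+1)^{-1}$ times the corresponding moments of $v$. This choice forces $(\ell+1)\tr_F\gamma = \tr_F v$ for every $F\in\Delta_2(T)$. Since $\m$ restricted to $T_i$ equals $(n+1)\la_i$ and hence vanishes on the outer face $F_i$ of $T_i$, the matched traces let me apply Lemma~\ref{zerotrace} (in the instance $\grad\la_i\times(\cdot)$) on each piece to conclude that $\grad\m\times\bigl(v-(\ell+1)\gamma\bigr)$ is divisible by $\m$, i.e.
\[
\grad\m\times v = (\ell+1)\,\grad\m\times\gamma + \m\phi
\]
for some $\phi\in V_{r-1}^2(T^\s)$.

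Combining this with the product rule $\curl(\m^{\ell+1}\gamma)=(\ell+1)\m^\ell\,\grad\m\times\gamma+\m^{\ell+1}\curl\gamma$ and the representation of $\omega$, I obtain
\[
\m^\ell\omega = \curl(\m^{\ell+1}\gamma) + \m^{\ell+1}\psi, \qquad \psi:=\phi-\curl\gamma+w\in V_{r-1}^2(T^\s),
\]
which is \eqref{eq1111}. To place $\psi$ in $V_{d,r-1}^2(T^\s)$ I would note that both $\m^\ell\omega$ and $\curl(\m^{\ell+1}\gamma)$ have single-valued normal traces across the interior faces of $T^\s$ (the latter because $\m^{\ell+1}\gamma$ is globally continuous); hence so does $\m^{\ell+1}\psi$, and since $\m^{\ell+1}$ does not vanish on interior faces this gives $\dive\psi\in L^2(T)$. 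For $r=0$ one has $w=0$ and determines $\gamma\in[\pol_0(T)]^3$ from the edge integrals of Lemma~\ref{lemma101} on the edges through $x_0$; the same trace-matching argument, now combined with Proposition~\ref{Prop1}, shows that $\xi:=\omega-(\ell+1)\grad\m\times\gamma$ vanishes identically, so that $\m^\ell\omega=\curl(\m^{\ell+1}\gamma)$ with $\psi\equiv 0$.

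The main obstacle is the well-posedness of the global polynomial $\gamma$: everything hinges on the fact that exactly the moments of $v$ demanded by Proposition~\ref{Prop2} (respectively Lemma~\ref{lemma101}) are single-valued across the split, which is the combined content of the preceding lemma and Lemma~\ref{lemma2}. Once $\gamma$ is in hand, the remaining manipulations --- the piecewise application of Lemma~\ref{zerotrace} and the product rule --- are routine.
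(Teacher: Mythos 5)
Your proposal is correct and follows essentially the same route as the paper: the decomposition $\omega=\grad\m\times v+\m w$ from the preceding lemma, construction of a single $\gamma\in[\pol_r(T)]^3$ via the Proposition \ref{Prop2} (N\'ed\'elec-type) moments of $v$, trace matching $(\ell+1)\gamma\times n_F=v\times n_F$ plus Lemma \ref{zerotrace} to extract the factor of $\m$, the product rule to define $\psi=-\curl\gamma+\phi+w$, and single-valuedness of normal traces to place $\psi$ in $V_{d,r-1}^2(T^\s)$. The only cosmetic differences are that you prescribe the interior moments of $\gamma$ from $v$ rather than setting them to zero (either choice works), and you carry out the $r=0$ case via Lemma \ref{lemma101} and Proposition \ref{Prop1}, which the paper's vector-notation proof leaves to the reader but which matches the argument in the general Lemma \ref{crucial}.
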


\begin{proof}
We prove the case $r \ge 1$ and leave the case $r=0$ to the reader.
By the previous lemma we have
\begin{equation*}
\omega= \grad \m \times v+ \m  w
\end{equation*}
for some $v \in V_r^{1}(T^\s)$ and $w \in  V_{r-1}^{2}(T^\s)$. Moreover, $v \cdot t_e$ is single-valued on all edges of $e$ of $T$. 
Applying Proposition \ref{Prop2}, we uniquely define  $\gamma \in [\pol_{r}(T)]^3$ such that it satisfies
\begin{subequations}
\begin{alignat}{4}
\label{eqn:Local3D123A}
(\ell+1) \int_e (\gamma \cdot t_e)\eta\, \mathrm{ds}  =& \int_e (v \cdot t_e) \eta\,\mathrm{ds}, \quad &&\forall \eta\in \pol_r(e),\quad &&\forall e\in \Delta_1(T),\\
\label{eqn:Local3D123B}
(\ell+1) \int_F (\gamma \times n_F)   \cdot \eta\,\mathrm{dA} =&\int_F (v \times n_F) \cdot \eta\,\mathrm{dA}, \quad &&\forall \eta\in D_{r-1}(F),\quad &&\forall F\in \Delta_2(T),\\
\label{eqn:Local3D123C}
\int_T \gamma \cdot \eta\,\mathrm{dx} =& 0, \quad &&   \forall \eta \in D_{r-2}(T).
\end{alignat}
\end{subequations}
Here, $D_{s}(F)= \pol_{s-1}(F)+ x_F \pol_{s-1}(F)$ is the local Raviart-Thomas space on $F$,
 and $D_{s}(T)= \pol_{s-1}(T)+ x \pol_{s-1}(T)$ is the local Raviart-Thomas space on $T$.
Using \eqref{eqn:Local3D123A}--\eqref{eqn:Local3D123B} and Stokes Theorem, we easily find 
that $(\ell+1) \gamma \times n_F= v\times n_F$ on $F$ for all faces $F$ of $T$. Because
$\grad \mu$ is parallel to $n_F$, we have
\begin{equation*}
\grad \m \times v= (\ell+1) \grad \m \times \gamma+ \m \phi,
\end{equation*}
for some $\phi \in V_{r-1}^2(T^\s)$.  Using the product rule we get
\begin{equation*}
\curl (\m^{\ell+1} \gamma)=(\ell+1) \m^\ell (\grad \m \times \gamma)+ \mu^{\ell+1} \curl \gamma, 
\end{equation*}
and hence
\begin{equation*}
\m^\ell( \grad \m \times v)=
\curl (\m^{\ell+1} \gamma)+ \m^{\ell+1}( -\curl \gamma+ \phi). 
\end{equation*}
If we let $\psi=-\curl \gamma+ \phi+ w \in V_{r-1}^2(T^\s)$ we arrive at the equation \eqref{eq1111}.  We know that $\mu^\ell \omega \cdot n$ and $d(\m^{\ell+1}  \gamma) \cdot n$ are single valued across all interior face of $T^\s$, and therefore, $\mu^{\ell+1} \psi \cdot n$ is also single valued. Hence, $\psi \in V_{d,r-1}^2(T^\s)$.
\end{proof}

\begin{proof}[Proof of \eqref{vector2}.]  For readability, we prove \eqref{vector2} with $r-1$ replaced by $r$.

Let $\omega \in \mathring{V}_{d,r}^2(T^\s)$ and $\Div \omega=0$. Assume we have found $ \gamma_{r}, \ldots, \gamma_{r-j}$
with $\gamma_{\ell} \in [\pol_{\ell}(T)]^3$  
and $\omega_{r-(j+1)} \in  V_{d,r-(j+1)}^2(T^\s)$ such that 
\begin{equation*}
\omega=\curl(\m \gamma_r+ \m^2 \gamma_{r-1}+ \cdots+\m^{j+1} \gamma_{r-j}) + \m^{j+1} \omega_{r-(j+1)}.
\end{equation*}
 Then, we see that 
 \begin{equation*}
 0=\dive (\m^{j+1} \omega_{r-(j+1)})= \m^j (\m\, \dive \omega_{r-(j+1)} + (j+1) \grad \m \cdot \omega_{r-(j+1)}),
 \end{equation*}
 which implies that $\m \, \dive \omega_{r-(j+1)} + (j+1) \grad \m \cdot \omega_{r-(j+1)}=0$ on $T$. Hence, we have that $\grad \m \cdot \omega_{r-(j+1)}=0$ on $\partial T$. This shows that $\omega_{r-(j+1)}\cdot n_F=0$ for all $F \in \Delta_{2}(T)$. Or in other words, $\omega_{r-(j+1)} \in  \mathring{V}_{d,r-(j+1)}^2(T^\s)$.
We then apply Lemma \ref{lemmacrucial} to get
\begin{equation*}
\m^{j+1} \omega_{r-(j+1)}=\curl (\m^{j+2} \gamma_{r-(j+1)})+ \m^{j+2} \omega_{r-(j+2)},
\end{equation*} 
where  $\gamma_{r-(j+1)} \in [\pol_{r-(j+1)}(T)]^3$ and $\omega_{r-(j+2)} \in V_{d,r-(j+2)}^2(T^\s)$. 
It follows that
\begin{equation*}
\omega=\curl(\m \gamma_r+ \m^2 \gamma_{r-1}+ \cdots+\m^{j+1} \gamma_{r-j}+ \m^{j+2} \gamma_{r-(j+1)}) + \m^{j+2} \omega_{r-(j+2)}.
\end{equation*}
Continuing by induction we have
\begin{equation*}
\omega=\curl(\m \gamma_r+ \m^2 \gamma_{r-1}+ \cdots+\m^{r} \gamma_{1}+ \m^{r+1} \gamma_0).
\end{equation*}
This completes the proof.
\end{proof}

\subsection{Degrees of Freedom}\label{subsec-DOFs}

Our goal is to develop degrees of freedom (DOFs), and in turn, to 
construct analogous global versions of the spaces appearing in \eqref{eqn:AllSequences}.
However, to develop DOFs, special care must be taken
to ensure that the induced finite element spaces satisfy the same
exactness properties as \eqref{eqn:AllSequences} due to the intrinsic smoothness of
the spaces. In particular, 
it is a simple exercise (cf. \cite{Alfeld84}) to show that functions in $M_{d,r}^0(T^\s)$
are $C^2$ at the vertices in $T^\s$, and this influences the construction of DOFs
and global finite element spaces.   For example, if we consider
the global analogue of the third sequence in \eqref{eqn:AllSequences}, then 
natural choices would to take 
${M}_{r-1}^1$ as the vector-valued Lagrange space, $V_{d,r-2}^2$
 the $H({\rm div})$-conforming Nedelec space, and $V_{d,r-3}^3$ 
 the space of piecewise polynomials.  This selection would indeed form
 a discrete (global) complex, but a simple counting argument shows that the resulting
 sequence is {\em not} exact on general contractible domains.

To construct the desired global spaces, it seems necessary
to consider finite element spaces with additional smoothness
at the vertices.
In particular, guided by the $C^1$ Clough-Tocher space, 
we consider the subspaces of $M_{d,r-k}^k(T^\s)$, $M_{r-k}^k(T^\s)$,
and $V_{d,r-k}^k(T^\s)$ that have $C^{2-k}$ continuity on $\Delta_0(T^\s)$,
and formulate the global finite element spaces using these subspaces
(in the case $k=3$, no additional continuity is added).
This framework is also adopted in \cite{Christiansen18} on general meshes, where
finite element spaces are constructed that form a subsequence
of the de Rham complex with minimal $L^2$ smoothness.
Here, we show that, on Alfeld splits, this framework yields
finite element spaces with greater global smoothness.

However, it turns out that these additional smoothness constraints at the vertices
are redundant in many cases as the next lemma shows.  Its proof
is given in the appendix.

\begin{lemma}\label{lem:SmoothAtVerts}
Any $\omega \in M_{d,r}^k(T^\s)$ is $C^{2-k}$ on $\Delta_0(T^\s)$ for $k=0,1,2$.
\end{lemma}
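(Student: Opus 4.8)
The plan is to establish the claim by induction on $k$, starting from the most regular case $k=0$ and descending, exploiting the structure already developed for the spaces $M_{d,r}^k(T^\s)$. First I would fix a vertex $x_j\in \Delta_0(T^\s)$; the two types of vertices to handle are the original vertices $x_0,\ldots,x_n$ of $T$ and the barycenter $\s$. Around each such vertex, the mesh $T^\s$ looks locally like a collection of simplices sharing that vertex, and I would work in a small neighborhood so that only the simplices $T_i$ incident to the vertex matter. The goal in each case is to read off how many derivatives of $\omega$ match across the faces meeting at the vertex.

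The key observation I would use is that membership in $M_{d,r}^k(T^\s)$ already encodes two layers of continuity: $\omega$ itself is in $C^0\La^k$, and $d\omega$ is in $C^0\La^{k+1}$. For $k=0$ this says $\omega$ is continuous and $\grad\omega$ (the proxy of $d\omega$) is continuous, so $\omega\in C^1$ globally; the assertion is that at the isolated vertices one in fact gains a full order, landing in $C^2$. I would argue this by a Taylor-expansion/jump argument at the vertex: writing $\omega|_{T_i}$ as a polynomial expanded about $x_j$, continuity of $\omega$ and of $d\omega$ forces the constant and linear terms (in the $k=0$ case, also the relevant second-order data) of adjacent pieces to agree, and the combinatorial fact that the faces through $x_j$ span enough directions — together with the special symmetric geometry of the Alfeld split, where the edges from $\s$ to the $x_i$ are balanced — upgrades the matching by one additional order. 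Concretely, I would compare the jumps $\jump{\p^\balpha \omega}$ across each internal face $f\ni x_j$ for $|\balpha|\le 2-k$, show they vanish at $x_j$ using the continuity of $d\omega$, and then use that the hyperplanes carrying these faces, restricted to a neighborhood of $x_j$, have no common nontrivial polynomial of the right degree vanishing on all of them except the zero polynomial.

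The main obstacle, and the step I expect to carry the real content, is precisely this last linear-algebra/geometry step: verifying that the arrangement of the $n$ internal faces emanating from the vertex (for an original vertex $x_j$) or the $n+1$ faces through the barycenter (for $\s$) is rich enough that agreement of the one-lower-order data plus the single extra continuity coming from $d\omega\in C^0$ pins down all derivatives up to order $2-k$. This is where the specific structure of the Alfeld split, rather than an arbitrary split, is essential, and I would phrase it as a statement about the cofactor/Koszul structure of the $d\la_i$ at the vertex, reducing to Proposition \ref{Prop1} applied on the link of the vertex. Because the statement is uniform in $k=0,1,2$ with a shrinking regularity target $C^{2-k}$, I would organize the three cases in parallel, noting that the $k=2$ case ($C^0$ at vertices) is essentially immediate from $\omega\in C^0\La^2$, the $k=1$ case ($C^1$) follows from continuity of $\curl\omega$ along the incident edges, and only the $k=0$ case genuinely requires the geometric nondegeneracy of the split. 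Since the authors defer this to the appendix, I would similarly isolate the vertex-arrangement lemma as the crux and keep the reduction to it short.
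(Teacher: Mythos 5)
Your plan mislocates the difficulty, and the step you defer is precisely the one that cannot be deferred. For $k=1$ you assert that $C^1$ continuity at the vertices ``follows from continuity of $\curl \omega$ along the incident edges.'' It does not. Across an internal face $F$ of $T^\s$, continuity of $\omega$ forces the jump of $\grad \omega$ to have the form $v\otimes n_F$, and continuity of $\curl\omega$ then forces $v\times n_F=0$, i.e., the jump equals $\alpha\, n_F\otimes n_F$ for a scalar $\alpha$ on $F$. This symmetric, curl-free jump is invisible to $\curl$: no amount of continuity of $\curl\omega$ on faces, edges, or at vertices can remove it. Killing $\alpha$ at a vertex requires exactly the arrangement argument you reserve for $k=0$, namely the linear independence of the matrices $n_F\otimes n_F$ over the internal faces meeting at that vertex. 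So for your program the case $k=1$ is not easier than $k=0$; it is the entire content of the lemma (the paper cites \cite{Alfeld84} for $k=0$ and treats $k=2$ as immediate, so $k=1$ is the only case it actually proves). Since you never carry out that arrangement lemma --- you only name it, and the proposed reduction to Proposition \ref{Prop1} ``on the link of the vertex'' is not a proof, as that proposition concerns forms with vanishing traces on the faces of a single simplex rather than jump compatibility across a vertex star --- the proposal has a genuine gap at its crux, and its claim that $k=1$ needs no such argument is wrong.

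For contrast, the paper's proof avoids local jump analysis entirely. It defines $\tilde{M}_{d,r-1}^1(T^\s)$ as the subspace of $M_{d,r-1}^1(T^\s)$ that is $C^1$ at the vertices and shows the two spaces coincide by a dimension count. The key input is Theorem \ref{thm:AllSequencesC2Exact}: every divergence-free $\kappa\in M_{r-2}^2(T^\s)$ equals $\curl\omega$ for some $\omega\in \Mrc(T^\s)$, which is $C^1$ at the vertices by construction and lies in $M_{d,r-1}^1(T^\s)$ because $\curl\omega=\kappa$ is continuous; hence the sequence running through $\tilde{M}_{d,r-1}^1(T^\s)$ is exact, and comparing its dimension identity with that coming from the exactness of \eqref{eqn:AllSequencesC22} gives $\dim \tilde{M}_{d,r-1}^1(T^\s)=\dim M_{d,r-1}^1(T^\s)$, so the inclusion is an equality. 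A direct geometric proof along your lines can probably be made to work --- summing the jumps $\alpha_F\, n_F\otimes n_F$ around each edge $[z,x_j]$ and using that $n\otimes n$ over three pairwise nonparallel normals are linearly independent --- but that independence statement is the whole proof, and your write-up both omits it and asserts it is only needed for $k=0$.
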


We introduce the local spaces 
with added continuity at the vertices as
\begin{align*}
\Mrc(T^\s):=\{\omega\in M_{r-1}^1(T^\s):\ \omega\text{ is $C^{1}$ on $\Delta_0(T^\s)$}\},\\
\Vrc(T^\s):=\{\omega\in V_{d,r-2}^2(T^\s):\ \omega\text{ is $C^{0}$ on $\Delta_0(T^\s)$}\},
\end{align*}
and set $\Mrco(T^\s) = \Mrc(T^\s)\cap \mathring{M}_{r-1}^1(T^\s)$
and $\Vrco(T^\s) = \Vrc(T^\s)\cap \mathring{V}_{r-2}^2(T^\s)$.

\begin{remark}\label{rem:StenbergCounting}
The space $\Vrc(T^\s)$ corresponds
to the nodal $H({\rm div})$ finite element
introduced in \cite{Stenberg10,Christiansen18}, and
the space $\Mrc(T^\s)$ is a vector-valued Hermite
finite element space.    Using \cite[Lemma 10]{Christiansen18}, 
we have for $r\ge 4$
\begin{align*}
\dim \Vrc(T^\s)
& = 3(\# \Delta_0(T^\s))+\big(\frac12 r(r-1)-3\big)(\#\Delta_2(T^\s))+\frac12r(r-3)(r-1)(\#\Delta_3(T^\s))\Big]\\
&=(2r-5)(r^2+r+3),\\
\dim \Vrco(T^\s)
& = 3(\# \Delta_0(T^\s)\backslash \Delta_0(T))+\big(\frac12 r(r-1)-3\big)(\#\Delta_2(T^\s)\backslash \Delta_2(T))\\
&\qquad+\frac12r(r-3)(r-1)(\#\Delta_3(T^\s))\Big]\\
%
&=2r^3-5r^2+3r-15.
\end{align*}
%
%
Furthermore, using \cite[Lemma 5]{Christiansen18} we find that ($r\ge 4$)
\begin{align*}
\dim \Mrc(T^\s)
& = 3\Big[ 4(\# \Delta_0(T^\s))+(r-4)(\#\Delta_1(T^\s))+\frac12(r-2)(r-3)(\#\Delta_2(T^\s))\\
&\qquad+\frac16(r-2)(r-3)(r-4)\#\Delta_3(T^\s)\Big]\\
&=(r-2)(2r^2+r+9),\\
\dim \Mrco(T^\s)
& = 3\Big[ 4(\# \Delta_0(T^\s)\backslash \Delta_0(T))+(r-4)(\#\Delta_1(T^\s)\backslash \Delta_1(T))+\frac12(r-2)(r-3)(\#\Delta_2(T^\s)\backslash \Delta_2(T))\\
&\qquad+\frac16(r-2)(r-3)(r-4)\#\Delta_3(T^\s)\Big]\\
%
%
& = (r-3)(2r^2-3r+10).
\end{align*}
\end{remark}

\begin{lemma}\label{lem:VcoMcoLow}
If $r\le 3$, then $\Mrc(T^\s) = [\pol_{r-1}(T)]^3$ and $\Vrc(T^\s) = M_{r-2}^2(T^\s)$.
In particular, the above dimension counts for $\Mrc(T^\s)$ and $\Vrc(T^\s)$
are valid in the case $r=3$ as well.
\end{lemma}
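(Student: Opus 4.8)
The plan is to prove the two claimed identities separately, in each case by showing that the added smoothness constraints at the vertices in $\Delta_0(T^\s)$ are vacuous for low $r$, so that the constrained spaces coincide with larger, more familiar spaces whose dimensions are already known.

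First I would treat $\Mrc(T^\s)$. By definition this is the subspace of $M_{r-1}^1(T^\s)$ consisting of forms that are additionally $C^1$ at the vertices of $T^\s$. The key observation is that if $r\le 3$, then on each simplex $T_i$ the components lie in $[\pol_{r-1}(T_i)]^3$ with $r-1\le 2$, so that a globally $C^0$ piecewise polynomial of degree at most $2$ on the Alfeld split is forced to be smooth. Concretely, I would argue that for $r\le 3$ any $\omega\in M_{r-1}^1(T^\s)$ is in fact a single polynomial of degree $r-1$ on all of $T$, i.e.\ $M_{r-1}^1(T^\s)=[\pol_{r-1}(T)]^3$; this follows because low-degree $C^0$ piecewise polynomials on a split simplex cannot genuinely break across the internal faces. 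A single polynomial is automatically $C^\infty$ everywhere, so the $C^1$-at-vertices constraint is automatically satisfied, giving $\Mrc(T^\s)=[\pol_{r-1}(T)]^3$. The cleanest way to establish $M_{r-1}^1(T^\s)=[\pol_{r-1}(T)]^3$ for $r\le 3$ is to invoke the dimension formula for $M_r^0(T^\s)$ from Remark \ref{rem:CountingFun}: one checks that $\dim M_s^0(T^\s)=\binom{s+n}{n}=\dim\pol_s(T)$ whenever $s\le n$ (here $n=3$), and since $M_s^0(T^\s)\supseteq \pol_s(T)$ trivially, equality of dimensions forces equality of spaces; taking $s=r-1\le 2$ and the threefold product gives the vector-valued statement.

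Next I would treat $\Vrc(T^\s)$, the subspace of $V_{d,r-2}^2(T^\s)$ that is $C^0$ at the vertices. The claim is $\Vrc(T^\s)=M_{r-2}^2(T^\s)$ for $r\le 3$. I would argue by the same low-degree mechanism: a piecewise polynomial $2$-form whose divergence is in $L^2$ (so the normal trace is single-valued across internal faces) and whose pieces have degree at most $r-2\le 1$ must already be globally continuous, i.e.\ $V_{d,r-2}^2(T^\s)=M_{r-2}^2(T^\s)$, at which point the $C^0$-at-vertices condition is subsumed by global continuity and adds nothing. To make the inclusion $V_{d,r-2}^2(T^\s)\subseteq M_{r-2}^2(T^\s)$ precise for $r\le 3$, I would again compare dimensions: $\dim V_{d,s}^2(T^\s)$ is given explicitly in Remark \ref{rem:CountingFun}, and $\dim M_s^2(T^\s)=3\dim M_s^0(T^\s)$; I would verify that these agree for $s=r-2\le 1$, and since $M_s^2(T^\s)\subseteq V_{d,s}^2(T^\s)$ always, the matching dimensions yield equality of the two spaces. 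Then $\Vrc(T^\s)=V_{d,r-2}^2(T^\s)\cap\{C^0\text{ at vertices}\}=M_{r-2}^2(T^\s)$, since elements of $M_{r-2}^2(T^\s)$ are globally $C^0$ and hence in particular $C^0$ at the vertices.

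Finally, the last sentence of the statement, that the dimension counts of Remark \ref{rem:StenbergCounting} remain valid at $r=3$, I would dispatch by direct substitution: evaluating the polynomial formulas for $\dim\Vrc(T^\s)$ and $\dim\Mrc(T^\s)$ at $r=3$ and checking they agree with $\dim M_{1}^2(T^\s)$ and $\dim[\pol_2(T)]^3=3\binom{5}{3}=30$ respectively, using the just-proved identifications. I expect the main obstacle to be the two equalities $M_{r-1}^1(T^\s)=[\pol_{r-1}(T)]^3$ and $V_{d,r-2}^2(T^\s)=M_{r-2}^2(T^\s)$: one must be careful that the inclusion of a single global polynomial space (or the continuous space) into the broken space is genuinely an equality and not merely a containment, and the safest route is the dimension-counting argument above rather than an ad hoc smoothness argument at the internal faces. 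The boundary-condition parts $\Mrco(T^\s)$ and $\Vrco(T^\s)$ follow by intersecting with the homogeneous subspaces and require no separate work.
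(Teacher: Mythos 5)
Your proposal rests on the premise that the vertex smoothness constraints in the definitions of $\Mrc(T^\s)$ and $\Vrc(T^\s)$ are vacuous for low $r$, and both identities you reduce the lemma to are in fact false (already at $r=2$, and certainly at $r=3$). For the first one: $M_{r-1}^1(T^\s)\neq[\pol_{r-1}(T)]^3$. Your dimension check conflates $M_s^0(T^\s)$ with $M_{d,s}^0(T^\s)$: the identity $\dim = \binom{s+n}{n}$ for $s\le n$ holds for the space with \emph{continuous gradient}, $M_{d,s}^0(T^\s)$, not for the merely continuous space. By the paper's own formula in Remark \ref{rem:CountingFun}, $\dim M_2^0(T^\s)=\binom{6}{4}-\binom{2}{4}=15>10=\dim\pol_2(T)$ (equivalently, one Lagrange DOF per vertex plus one per edge of the split: $5+10$). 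An explicit element of the gap is $\mu^2$, the square of the barycenter hat function: it is continuous and piecewise quadratic but is not a polynomial on $T$, so $\dim M_2^1(T^\s)=45$ while $\dim[\pol_2(T)]^3=30$; the $C^1$-at-vertices condition is exactly what removes the extra $15$ dimensions (note $\grad(\mu^2)=2\mu\grad\mu$ is multivalued at the barycenter, and $\mu\,c$ for a constant vector $c$ fails $C^1$ at the boundary vertices). For the second one: $V_{d,r-2}^2(T^\s)\neq M_{r-2}^2(T^\s)$. Normal continuity across internal faces does not imply full continuity: for a constant vector $c$, the field $\curl(\mu c)=\grad\mu\times c$ is piecewise constant and lies in $V_{d,0}^2(T^\s)$, but its tangential component jumps across every internal face; at the linear level the counts are $\dim V_{d,1}^2(T^\s)=30$ versus $\dim M_1^2(T^\s)=15$. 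So in both cases the ambient space is strictly larger than the claimed right-hand side, and no dimension count of ambient spaces can close the argument; the vertex conditions carry all the content of the lemma.

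The paper's proof is precisely the kind of "smoothness argument at the internal faces" that you set aside as unsafe. For $\Vrc(T^\s)$ with $r=3$: if $\omega\in V_{d,1}^2(T^\s)$ is continuous at the five vertices of $T^\s$, then across each internal face $[z,x_i,x_j]$ the jump of $\omega$ is affine on that face and vanishes at its three vertices, hence vanishes identically; thus $\omega$ is globally continuous and $\Vrc(T^\s)=M_1^2(T^\s)$. For $\Mrc(T^\s)$ with $r=3$: write $\omega=\omega^{(2)}+\mu\,\omega^{(1)}+\mu^2\omega^{(0)}$ with $\omega^{(i)}\in[\pol_i(T)]^3$, observe that on an internal face $F$ the jump of $\grad\omega$ equals $\omega^{(1)}\otimes(\grad\mu_1-\grad\mu_2)+2\mu\,\omega^{(0)}\otimes(\grad\mu_1-\grad\mu_2)$, and evaluate this at the boundary vertices of $F$ (where $\mu=0$) to force $\omega^{(1)}\equiv 0$, then at the barycenter (where $\mu=1$) to force $\omega^{(0)}\equiv 0$. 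If you want to repair your proposal, this is the structure to follow; your final step of substituting $r=3$ into the formulas of Remark \ref{rem:StenbergCounting} is fine, but only after these identifications have been proved by genuinely using the vertex conditions.
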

\begin{proof}
We consider the case $r=3$, as the other cases are considerably simpler.

If $r=3$, then $\Vrc(T^\s)$ consists of vector-valued piecewise linear
polynomials that are continuous at the vertices.  This implies that the functions
are continuous on $T$, and thus $\Vrc(T^\s) = M_{r-2}^2(T^\s)$.

Next, we write
\[
M_{2}^1(T^\s) = [\pol_2(T)]^3+ \mu [\pol_1(T)]^3+ \mu^2 [\pol_0(T)]^3,
\]
and note the inclusion $M_{c,2}^1(T^\s)\subset M_2^1(T^\s)$.
Let $\omega\in M_{c,2}^1(T^\s)$ and write
$\omega = \omega^{(2)} + \mu \omega^{(1)}+\mu^2 \omega^{(0)}$ with $\omega^{(i)}\in [\pol_i(T)]^3$.
Let $T_1,T_2\in T^\s$ and set $F = \p T_1\cap \p T_2$.  We then have
\begin{align}
\label{eqn:ProductRule156}
\grad (\omega_1-\omega_2) = \omega^{(1)}\otimes (\grad \mu_1-\grad \mu_2) +2 \mu \omega^{(0)}\otimes (\grad \mu_1-\grad\mu_2)\text{  on $F$},
\end{align}
where $\omega_j$ and $\mu_j$ is the restriction of $\omega$ and $\mu$ to $T_j$, respectively.
Restricting this identity to the boundary vertex $a\in \Delta_0(F)\cap \Delta_0(T)$, we obtain
\[
0 = \grad (\omega_1-\omega_2)(a) = \omega^{(1)}(a)\otimes (\grad \mu_1-\grad \mu_2),
\]
which implies that $\omega^{(1)}(a)=0$.  Thus, $\omega^{(1)}$ vanishes on all vertices of $T$,
and therefore $\omega^{(1)}\equiv 0$.

Next, we restrict \eqref{eqn:ProductRule156} to the barycenter of $T$ to get
\begin{align*}
0 = \grad (\omega_1-\omega_2)(z) =2 \omega^{(0)}(z)\otimes (\grad \mu_1-\grad\mu_2).
\end{align*}
We then conclude that $\omega^{(0)} \equiv 0$, and so $\omega = \omega^{(2)}\in [\pol_2(T)]^3$.
Thus, $M_{c,2}^1(T^\s) = [\pol_2(T)]^3$.
\end{proof}

We study finite element spaces in the sequence, where $C^{2-k}$ continuity
on $\Delta_0(T^\s)$ is added to \eqref{eqn:AllSequences} and without boundary conditions:
\begin{subequations}
\label{eqn:AllSequencesC2}
\begin{alignat}{5}
\label{eqn:AllSequencesC21}
&\bbR\
{\xrightarrow{\hspace*{0.5cm}}}\
{M}_{d,r}^0(T^\s)\
&&\stackrel{\grad}{\xrightarrow{\hspace*{0.5cm}}}\
{M}_{d,r-1}^1(T^\s)\
&&\stackrel{\curl}{\xrightarrow{\hspace*{0.5cm}}}\
{M}_{d,r-2}^2(T^\s)\
&&\stackrel{{\rm div}}{\xrightarrow{\hspace*{0.5cm}}}\
{M}_{d,r-3}^3(T^\s)\
&&\xrightarrow{\hspace*{0.5cm}}\
 0,\\
\label{eqn:AllSequencesC22}
&\bbR  
{\xrightarrow{\hspace*{0.5cm}}}\
{M}_{d,r}^0(T^\s)\
&&\stackrel{\grad}{\xrightarrow{\hspace*{0.5cm}}}\
{M}_{d,r-1}^1(T^\s)\
&&\stackrel{\curl}{\xrightarrow{\hspace*{0.5cm}}}\
{M}_{r-2}^2(T^\s)\
&&\stackrel{{\rm div}}{\xrightarrow{\hspace*{0.5cm}}}\
{V}_{d,r-3}^3(T^\s)\
&&\xrightarrow{\hspace*{0.5cm}}\
0,\\
\label{eqn:AllSequencesC23}
&\bbR\
{\xrightarrow{\hspace*{0.5cm}}}\
{M}_{d,r}^0(T^\s)\
&&\stackrel{\grad}{\xrightarrow{\hspace*{0.5cm}}}\
\Mrc(T^\s)\
&&\stackrel{\curl}{\xrightarrow{\hspace*{0.5cm}}}\
\Vrc(T^\s)\
&&\stackrel{{\rm div}}{\xrightarrow{\hspace*{0.5cm}}}\
{V}_{d,r-3}^3(T^\s)\
&&\xrightarrow{\hspace*{0.5cm}}\  
0.
\end{alignat}
\end{subequations}
Note that, compared to \eqref{eqn:AllSequences},
only the second and third spaces in \eqref{eqn:AllSequencesC23} have been altered.

\begin{theorem}\label{thm:AllSequencesC2Exact}
The sequences \eqref{eqn:AllSequencesC2}
are exact for $r\ge 1$.
\end{theorem}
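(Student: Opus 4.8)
The plan is to establish exactness of the three sequences in \eqref{eqn:AllSequencesC2} by reducing them, space by space, to the already-proven exactness of \eqref{eqn:AllSequences} (in the versions without boundary conditions) and of the polynomial de Rham complex $\{\pol_r\La^k(T)\}$. First I would treat the sequences \eqref{eqn:AllSequencesC21} and \eqref{eqn:AllSequencesC22}. These involve only the spaces $M_{d,r}^k(T^\s)$, $M_{r}^k(T^\s)$, and $V_{d,r}^k(T^\s)$, so their exactness is exactly what Theorems \ref{mainthm}--\ref{cor1} and Corollaries \ref{cor2}--\ref{cor3} yield (in the free-boundary form), with the only modification being that the leftmost object is $\bbR$ rather than $0$: I would check that a function $\omega\in M_{d,r}^0(T^\s)$ with $\grad\omega=0$ is globally constant (it is continuous and piecewise constant on a connected mesh), giving exactness at the first nontrivial node. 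Surjectivity at the rightmost node ($\dive$ onto $M_{d,r-3}^3$ or $V_{d,r-3}^3$) and exactness at the intermediate nodes are precisely the content of the cited results translated by the vector proxies described in Section \ref{sec-Smooth3D}.

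The substance of the theorem is the exactness of \eqref{eqn:AllSequencesC23}, where the second and third spaces have been replaced by the vertex-enriched spaces $\Mrc(T^\s)$ and $\Vrc(T^\s)$. The key observation I would exploit is Lemma \ref{lem:SmoothAtVerts}: any $\omega\in M_{d,r-1}^1(T^\s)$ is automatically $C^1$ at the vertices, and any $\omega\in M_{d,r-2}^2(T^\s)$ is automatically $C^0$ at the vertices. Consequently $M_{d,r-1}^1(T^\s)\subset \Mrc(T^\s)$ and $M_{d,r-2}^2(T^\s)\subset \Vrc(T^\s)$. I would then verify exactness node by node. Exactness at $\Mrc$: if $\omega\in\Mrc(T^\s)$ satisfies $\curl\omega=0$, then since $\Mrc(T^\s)\subset \mathring{M}$-free version of $M_{r-1}^1$, exactness of \eqref{eqn:AllSequencesC22} (equivalently property \eqref{vector1} in its free-boundary form) yields $\omega=\grad\rho$ for some $\rho\in M_{d,r}^0(T^\s)$; here no extra vertex continuity on $\rho$ need be imposed because $M_{d,r}^0$ already enforces $C^2$ at vertices by Lemma \ref{lem:SmoothAtVerts}, and differentiating shows $\grad\rho=\omega$ is consistent with $\omega$ being $C^1$ at vertices. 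The reverse inclusion $\grad M_{d,r}^0(T^\s)\subset\Mrc(T^\s)$ again follows from Lemma \ref{lem:SmoothAtVerts}.

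The heart of the argument, and the step I expect to be the main obstacle, is exactness at $\Vrc(T^\s)$: given $\omega\in\Vrc(T^\s)$ with $\dive\omega=0$, I must produce a potential $\rho\in\Mrc(T^\s)$ with $\curl\rho=\omega$ whose vertex continuity is upgraded from the $C^0$ guaranteed by \eqref{vector2} to the full $C^1$ demanded by $\Mrc$. The plain exactness of \eqref{eqn:AllSequencesC22} (property \eqref{vector2}) furnishes some $\rho\in M_{r-1}^1(T^\s)$ with $\curl\rho=\omega$, but such $\rho$ need not be $C^1$ at vertices. The strategy is to correct $\rho$ by adding a curl-free term: I would seek $\phi\in M_{d,r}^0(T^\s)$ so that $\rho+\grad\phi$ is $C^1$ at each vertex, which is possible precisely because adjusting $\grad\phi$ at a vertex can absorb the jump in the first-order vertex data of $\rho$ while leaving $\curl(\rho+\grad\phi)=\omega$ unchanged. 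Establishing that the required $\phi$ exists amounts to a dimension/counting check that the vertex smoothing constraints are consistent, and here the explicit dimension formulas of Remark \ref{rem:StenbergCounting} together with Lemmas \ref{lem:SmoothAtVerts} and \ref{lem:VcoMcoLow} should close the gap: comparing $\dim\Vrc(T^\s)$, $\dim\Mrc(T^\s)$, and $\dim M_{d,r}^0(T^\s)$ via the rank-nullity argument (as in the proof of Corollary \ref{cor4}) will confirm that the alternating sum of dimensions vanishes, forcing exactness. Finally, surjectivity of $\dive$ onto $V_{d,r-3}^3(T^\s)$ is inherited directly from \eqref{vector3}, since every $\rho$ produced there lies in $V_{d,r-2}^2(T^\s)\subset\Vrc(T^\s)$ after the usual free-boundary adjustment.
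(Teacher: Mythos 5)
Your proposal follows the same skeleton as the paper's proof (inclusions for the two easy nodes, a dimension count for the hard node), but two of its steps fail as written. The first is the surjectivity of $\dive:\Vrc(T^\s)\to V_{d,r-3}^3(T^\s)$: you justify it via \eqref{vector3} together with the inclusion $V_{d,r-2}^2(T^\s)\subset\Vrc(T^\s)$, but that inclusion is backwards --- by definition $\Vrc(T^\s)$ is the \emph{subspace} of $V_{d,r-2}^2(T^\s)$ consisting of fields that are additionally $C^0$ at the vertices, so a potential lying only in the N\'ed\'elec space proves nothing. The repair (the paper's part (i)) is to use Theorem \ref{cor1}, which produces a potential in the fully continuous space $M_{r-2}^2(T^\s)$, together with the true inclusion $M_{r-2}^2(T^\s)\subset\Vrc(T^\s)$. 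This fix matters for the rest of your argument as well, since your rank--nullity computation of $\dim\{\omega\in\Vrc(T^\s):\dive\omega=0\}$ requires precisely this surjectivity.

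The second, more conceptual, failure is the ``correction by $\grad\phi$'' mechanism that you call the heart of the argument. It cannot work, even in principle: any curl-free field in $M_{r-1}^1(T^\s)$ is, by Theorem \ref{cor1}, equal to $\grad\phi$ for some $\phi\in M_{d,r}^0(T^\s)$, and by Lemma \ref{lem:SmoothAtVerts} such a $\phi$ is $C^2$ at the vertices, so $\grad\phi$ is automatically $C^1$ there. Consequently, adding a curl-free correction never changes the first-order vertex data of a potential $\rho$: either every continuous potential of $\omega$ already lies in $\Mrc(T^\s)$, or none does, and there is no jump that $\grad\phi$ could absorb. What actually establishes exactness at the $\Vrc(T^\s)$ node is exactly the counting argument you retreat to at the end: with the surjectivity of $\dive$ and the identification $\ker(\curl)\cap\Mrc(T^\s)=\grad M_{d,r}^0(T^\s)$ in hand, rank--nullity and the dimension formulas of Remark \ref{rem:StenbergCounting} (extended to $r=3$ by Lemma \ref{lem:VcoMcoLow}) give $\dim\curl\Mrc(T^\s)=\dim\{\omega\in\Vrc(T^\s):\dive\omega=0\}$, and since $\curl\Mrc(T^\s)$ is contained in that kernel, the two spaces coincide; the case $r\le 2$ is handled separately, since then all spaces reduce to full polynomial spaces. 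This is precisely the paper's proof; your proposal reaches it, but only after discarding the constructive correction it is framed around.
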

\begin{proof}
It has already been established that
the first two sequences \eqref{eqn:AllSequencesC21} and \eqref{eqn:AllSequencesC22} are exact.
We now show that \eqref{eqn:AllSequencesC23} is exact as well.

(i) The surjectivity ${\dive}:{V}_{c,r-2}^2(T^\s)\to V_{d,r-3}^3(T^\s)$ follows from the surjectivity of  ${\dive}:{M}_{r-2}^2(T^\s)\to V_{d,r-3}^3(T^\s)$ and the fact that ${M}_{r-2}^2(T^\s) \subset \Vrc(T^\s)$.


(ii) Similarly the surjectivity of ${\rm grad}:M_{d,r}^0(T^\s) \rightarrow  \ker\,\Mrc(T^\s)$ 
follows from the surjectivity of ${\rm grad}:M_{d,r}^0(T^\s) \rightarrow  \ker\, M_{r-1}^1(T^\s)$  
and the inclusion $\Mrc(T^\s) \subset M_{r-1}^1(T^s)$.


(iii) Let $r\ge 3$.  By the rank-nullity theorem, part (i), and Remark \ref{rem:StenbergCounting},
\begin{align*}
\dim \ker \Vrc(T^\s) 
&= \dim \Vrc(T^\s) - \dim V_{d,r-3}^3(T^\s)\\
& =(2r-5)(r^2+r+3) - \frac46 r(r-1)(r-2)\\
& =\frac43 r^3-r^2-\frac13 r-15.
\end{align*}
On the other hand, we have, by part (ii), Remark \ref{rem:StenbergCounting},
and Corollary \ref{cor4},
\begin{align*}
\dim \range \Mrc
%
& =\dim \Mrc(T^\s) - \dim \grad {M}_{d,r}^0(T^\s)\\
& =\dim \Mrc(T^\s) - \dim  {M}_{d,r}^0(T^\s)+1\\
& = (r-2)(2r^2+r+9) - \big(\frac23 r^3-2r^2+\frac{22}3r-2\big)+1\\
& = \frac43 r^3-r^2-\frac13 r-15.
\end{align*}
Thus, $\dim \ker \Vrc(T^\s)  =  \dim \range \Mrc(T^\s)$,
and since $ \range \Mrc\subset \ker \Vrc(T^\s)$
we conclude that $ \range \Mrc(T^\s) = \ker \Vrc(T^\s)$.  
We then conclude that the sequence is exact for $r\ge 3$.

If $r\le 2$, then $M_{d,r}^0(T^\s) = \pol_r(T)$, $\Mrc(T^\s) = [\pol_{r-1}(T)]^3$,
and $\Vrc(T^\s) = [\pol_{r-2}(T)]^3$, and so the sequence is clearly exact in this case.
\end{proof}

We now present  unisolvent sets of degrees
of freedom for the three-dimensional spaces 
in the previous section, and show that
the DOFs naturally induce a set of commutative projections.
First, we consider the family spaces with the largest amount of smoothness, $M_{d,r-k}^k(T^\s)$ $(k=0,1,2,3$).

Applying Corollary \ref{cor4}, we find that the dimension of these spaces are
\begin{alignat*}{2}
\dim M_{d,r}^0(T^\s) 
& = \frac23 r^3-2r^2+\frac{22}3r-2,\quad
&&\dim   M_{d,r-1}^1(T^\s)= (r-1)(2r^2-7r+18),\\
 \dim M_{d,r-2}^2(T^\s) &= \max\{2r^3-12r^2+32r-30,0\},\quad
&& \dim M_{d,r-3}^3(T^\s)   = \max\{\frac23 r^3-5r^2+\frac{43}{3}r -15,0\}.
\end{alignat*}
Likewise, we have
\begin{alignat*}{2}
\dim \mathring{M}_{d,r}^0(T^\s) 
& = \max\{\frac23(r-2))(r-3)(r-4),0\},\quad
&&\dim   \mathring{M}_{d,r-1}^1(T^\s)= \max\{(2r-5)(r-3)(r-4),0\},\\
 \dim \mathring{M}_{d,r-2}^2(T^\s) &= \max\{2(r-4)(r^2-6r+10),0\},\quad
&& \dim \mathring{M}_{d,r-3}^3(T^\s)   = \max\{\frac13 (2r-7)(r^2-7r+15)-1,0\}.
\end{alignat*}

We start with the DOFs of the $C^1$ finite element space;
the proof of the lowest order case ($r=5$) is found in \cite{Alfeld84,SplineBook}.
%
\begin{lemma}
 \label{lem:C1-DOF}
 Let $r\ge 5$. Then, a function  $\omega \in M_{d,r}^0(T^\s)$ is uniquely determined by the following degrees of freedom (DOFs):
\begin{subequations}
\label{eqn:SigmaDOFs}
\begin{alignat}{4}
\label{eqn:C1DOF1}
&D^\alpha \omega (a),\qquad && \forall |\alpha|\le 2,\quad &&\forall a\in \Delta_0(T)\qquad &&\text{($40$ DOFs)},\\
\label{eqn:C1DOF21}
&\int_e \omega \,\sigma \mathrm{ds},\qquad &&\forall \sigma\in \pol_{r-6}(e),\quad &&\forall e\in\Delta_1(T)\qquad &&
\text{($6(r-5)$ DOFs)},\\
\label{eqn:C1DOF22}
&\int_e \frac{\p \omega}{\p n_e^\pm}\,\sigma\,\mathrm{ds},\qquad 
&&\forall \sigma\in \pol_{r-5}(e),\quad &&\forall e\in\Delta_1(T)\qquad &&
\text{($12(r-4)$ DOFs)},\\
\label{eqn:C1DOF31}
&\int_F \omega \, \sigma \,\mathrm{dA},
\qquad 
&&\forall \sigma\in \pol_{r-6}(F),\quad &&\forall F\in\Delta_2(T)\qquad &&
\text{($4\frac{(r-5)(r-4)}{2}$ DOFs)},\\
\label{eqn:C1DOF32}
&\int_F \frac{\p \omega}{\p n_F}\, \sigma \,\mathrm{dA},
\qquad 
&&\forall \sigma\in \pol_{r-4}(F),\quad &&\forall F\in\Delta_2(T)\qquad &&
\text{($4\frac{(r-3)(r-2)}{2}$ DOFs)},\\
\label{eqn:C1DOF4}
&
\int_T {\grad \omega \cdot\grad\sigma}\,\mathrm{dx},
\qquad 
&&\forall \sigma\in \mathring{M}_{d,r}^{0}(T^\s),&& &&
\text{($2\frac{(r-4)(r-3)(r-2)}{3}$ DOFs)}.
\end{alignat}
\end{subequations}
Here, $n_{e_{\pm}}$ are two orthonormal normal vectors that are orthogonal to the edge $e$.
In the case $r = 6$, the sets listed in 
\eqref{eqn:C1DOF21} and \eqref{eqn:C1DOF31} are omitted.
\end{lemma}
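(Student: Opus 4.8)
The plan is to prove unisolvence in the usual two steps: first check that the number of conditions in \eqref{eqn:SigmaDOFs} equals $\dim M_{d,r}^0(T^\s)$, and then show that a function annihilated by all of them must vanish. The count is routine arithmetic: the six cardinalities listed in \eqref{eqn:SigmaDOFs} sum to $\tfrac23 r^3-2r^2+\tfrac{22}3 r-2$, which is exactly $\dim M_{d,r}^0(T^\s)$ as computed via Corollary~\ref{cor4}. I first note that Lemma~\ref{lem:SmoothAtVerts} guarantees $\omega$ is $C^2$ at the vertices of $T$, so the second-order evaluations \eqref{eqn:C1DOF1} are well defined. The vanishing argument then proceeds by peeling the subsimplices in increasing dimension: vertices, edges, faces, and finally the interior.

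Assume every DOF vanishes. The vertex DOFs \eqref{eqn:C1DOF1} annihilate the full second-order jet of $\omega$ at each of the four vertices of $T$. Because the Alfeld split leaves $\p T$ unrefined, for each $e\in\Delta_1(T)$ the trace $\omega|_e$ is a single polynomial in $\pol_r(e)$, and since $\grad\omega\in C^0(T)$ the restriction $\grad\omega|_e$ is single-valued, with each $\p\omega/\p n_e^\pm\in\pol_{r-1}(e)$. The second-order vertex jets furnish Hermite endpoint data (value and first two tangential derivatives for $\omega|_e$, value and first tangential derivative for each $\p\omega/\p n_e^\pm$), while \eqref{eqn:C1DOF21}--\eqref{eqn:C1DOF22} supply the remaining $\dim\pol_{r-6}(e)$ and $\dim\pol_{r-5}(e)$ moments. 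Matching these against $\dim\pol_r(e)=r+1$ and $\dim\pol_{r-1}(e)=r$ forces $\omega|_e=0$ and $\p\omega/\p n_e^\pm=0$, so $\grad\omega=0$ on every edge of $T$.

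Next I treat the faces. Since the barycenter $z$ is interior to $T$, each $F=F_i\in\Delta_2(T)$ is a face of a single subtetrahedron $T_i$, so $\omega|_F\in\pol_r(F)$ and $\p\omega/\p n_F|_F\in\pol_{r-1}(F)$ are honest polynomials on $F$. From the edge step, $\omega|_F$ together with its within-face normal derivative vanishes on $\p F$, so $\omega|_F$ has a double zero along each edge of $F$ and is divisible by $(\la_0^F\la_1^F\la_2^F)^2$ (writing $\la_j^F$ for the barycentric coordinates of $F$), while $\p\omega/\p n_F|_F$ vanishes on $\p F$ and is divisible by $\la_0^F\la_1^F\la_2^F$. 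Writing $\omega|_F=(\la_0^F\la_1^F\la_2^F)^2 h$ with $h\in\pol_{r-6}(F)$ and testing \eqref{eqn:C1DOF31} against $\sigma=h$ produces a positive-weight integral of $h^2$ over $F$, forcing $h=0$; the analogous computation with \eqref{eqn:C1DOF32} and the cofactor in $\pol_{r-4}(F)$ forces $\p\omega/\p n_F|_F=0$. Hence $\omega=0$ and $\grad\omega=0$ on all of $\p T$, i.e.\ $\omega\in\mathring{M}_{d,r}^0(T^\s)$.

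Finally, with $\omega\in\mathring{M}_{d,r}^0(T^\s)$ I choose $\sigma=\omega$ in the interior DOF \eqref{eqn:C1DOF4}, which gives $\int_T|\grad\omega|^2\,\mathrm{dx}=0$; thus $\grad\omega\equiv0$, and since $\omega|_{\p T}=0$ we conclude $\omega\equiv0$. This is exactly why \eqref{eqn:C1DOF4} is posed as an $H^1$-seminorm pairing against $\mathring{M}_{d,r}^0(T^\s)$ rather than as ordinary interior moments. The main obstacle is the low-order regime: once $r\le 6$ the moment spaces $\pol_{r-6}$ on edges and faces degenerate, and the clean Hermite-plus-moment bookkeeping must be adapted---the quintic case $r=5$ is the classical Alfeld element \cite{Alfeld84}, and $r=6$ requires the modification of \eqref{eqn:C1DOF21} and \eqref{eqn:C1DOF31} flagged in the statement. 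A secondary point requiring care is verifying the precise order of boundary vanishing so that the bubble cofactors land exactly in the test spaces $\pol_{r-6}(F)$ and $\pol_{r-4}(F)$ used by \eqref{eqn:C1DOF31}--\eqref{eqn:C1DOF32}.
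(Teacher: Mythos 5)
Your proposal is correct and follows essentially the same argument as the paper: verify that the DOF count equals $\dim M_{d,r}^0(T^\s)$, then show a function annihilating all DOFs vanishes by peeling edges (via \eqref{eqn:C1DOF1}--\eqref{eqn:C1DOF22}), then faces (via \eqref{eqn:C1DOF31}--\eqref{eqn:C1DOF32}), concluding $\omega\in\mathring{M}_{d,r}^0(T^\s)$, and finally taking $\sigma=\omega$ in \eqref{eqn:C1DOF4}. The only difference is that you supply the Hermite-plus-moment counting and bubble-factorization details that the paper's proof leaves implicit, which is a welcome elaboration rather than a different route.
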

\begin{proof}
 By a simple dimension count, we have the number of total DOFs
 in \eqref{eqn:SigmaDOFs} equals the dimension of the space $M_{d,r}^0(T^\s)$.

Let $\omega \in M_{d,r}^0(T^\s)$ be such that the DOFs \eqref{eqn:SigmaDOFs}
vanish.
The DOFs in \eqref{eqn:C1DOF1}--\eqref{eqn:C1DOF22} implies that 
$\omega|_e = 0$ and $\grad \omega|_e = 0$ for all $e\in \Delta_1(T)$.
 Combining these results with the DOFs on \eqref{eqn:C1DOF31}--\eqref{eqn:C1DOF32}, we conclude that 
 $\omega|_F = 0$ and $\grad \omega|_F = 0$ for all $F\in \Delta_2(T)$.
 Hence, $\omega \in \mathring{M}_{d,r}^{0}(T^\s)$. Taking $\sigma = \omega$ in \eqref{eqn:C1DOF4}, we get $\grad \omega=0$.
 Hence $\omega = 0$. This completes the proof.
\end{proof}

\begin{remark}
The set of DOFs is not unique. For example, 
we can obtain another set of DOFs by simply  
changing the internal DOFs \eqref{eqn:C1DOF4} in the set \eqref{eqn:SigmaDOFs} to be 
\[
 \int_T \omega \,\sigma\,\mathrm{dx},\quad \forall \sigma\in \mathring{M}_{d,r}^{0}(T^\s).
\]
The reason for our choice of DOFs \eqref{eqn:SigmaDOFs} will be clear in the next section when we discuss commutative projections.
\end{remark}

\begin{remark}\label{rem:H2Remark}
The proof of Lemma \ref{lem:C1-DOF}
shows that if $\omega\in M^0_{d,r}(T^\s)$ vanishes
at the DOFs \eqref{eqn:C1DOF1}--\eqref{eqn:C1DOF32}
restricted to a single face $F\in \Delta_2(T)$, then 
$\omega|_F=0$ and $\grad \omega|_F=0$.  Thus, 
the DOFs induce a global $C^1$ finite element space.
\end{remark}

\begin{lemma}\label{lem:H1curlDOFs}
Let $r\ge 5$, then
a function $\omega \in M_{d,r-1}^1 (T^\s)$ is uniquely determined by
the following degrees of freedom:
\begin{subequations}
\label{UDOFs}
\begin{alignat}{4}
\label{UDOFs1}
&D^\alpha \omega(a)\qquad &&\forall |\alpha|\le 1,
&& \forall a\in \Delta_0(T)\quad &&(\text{$48$ DOFs}),\\
\label{UDOFs2}
&\int_e \omega \cdot\kappa\,\mathrm{ds}\quad &&\forall \kappa\in [\pol_{r-5}(e)]^3,\quad &&\forall e\in \Delta_1(T)\quad &&
\text{($18(r-4)$ DOFs)},
\\
\label{UDOFs3}
&\int_e (\curl \omega)\cdot \kappa\, \mathrm{ds}\quad
&&\forall \kappa\in [\pol_{r-4}(e)]^3,\quad &&\forall e\in \Delta_1(T)\quad &&
\text{($18(r-3)$ DOFs)},\\
\label{UDOFs4}
&\int_f (\omega \cdot n_F) \kappa\, \mathrm{dA} \quad &&\forall \kappa\in \pol_{r-4}(F),\quad &&\forall F\in \Delta_2(T)\qquad &&\text{($2(r-2)(r-3)$ DOFs)}\\
\label{UDOFs5}
&\int_F (n_F \times \omega \times n_F) \cdot \kappa\, \mathrm{dA} \quad &&\forall \kappa\in {D}_{r-5}(F),\quad &&\forall F\in \Delta_2(T)\qquad &&\text{($4(r-3)(r-5)$ DOFs)},\\
\label{UDOFs6}
&\int_F (\curl \omega \times n_F) \cdot \kappa\, \mathrm{dA} \quad &&\forall \kappa\in [\pol_{r-5}(F)]^3, \quad &&\forall F\in \Delta_2(T)\quad &&\text{($4(r-3)(r-4)$ DOFs)},\\
\label{UDOFs7}
&\int_T \omega \cdot \kappa \, \mathrm{dx} \quad &&\forall \kappa\in \grad \mathring{M}_{d,r}^{0}(T^\s), && &&
\hspace*{-0.95cm}\text{($\frac{2(r-4)(r-3)(r-2)}{3}$ DOFs)},\\
\label{UDOFs8}
&\int_T \curl \omega \cdot \kappa \, \mathrm{dx} \quad &&\forall \kappa\in 
\curl\mathring{M}_{d,r-1}^1(T^\s), && &&
\hspace*{-0.95cm}\text{($\frac{(r-4)(r-3)(4r-11)}{3}$ DOFs)},
\end{alignat}
\end{subequations}
where we recall that $D_{r-5}(F)$ is the local Raviart--Thomas space
on the face $F$.
\end{lemma}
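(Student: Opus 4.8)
The plan is to verify unisolvence in the standard two-step fashion. First I would confirm that the number of functionals in \eqref{UDOFs1}--\eqref{UDOFs8} equals $\dim M_{d,r-1}^1(T^\s) = (r-1)(2r^2-7r+18)$ (a bookkeeping exercise summing the per-simplex counts recorded beside each set of DOFs), and then show that any $\omega\in M_{d,r-1}^1(T^\s)$ annihilating every functional must vanish. Throughout I would use that $\omega$ is $C^1$ at the vertices of $T^\s$ (Lemma \ref{lem:SmoothAtVerts}), so that the point values $D^\alpha\omega(a)$, $|\alpha|\le 1$, in \eqref{UDOFs1} are well defined, and that $\curl\omega$ is globally continuous, so that its edge and face traces appearing in \eqref{UDOFs3}, \eqref{UDOFs6}, \eqref{UDOFs8} are single-valued. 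The injectivity argument then proceeds by induction on the dimension of the sub-simplex, exactly as in the proof of Lemma \ref{lem:C1-DOF}.

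First I would work on the edges. Fix $e\in\Delta_1(T)$. Each component of $\omega|_e$ is a polynomial of degree $r-1$; the vanishing of \eqref{UDOFs1} supplies its value and tangential derivative at the two endpoints, while the vanishing of \eqref{UDOFs2} removes all moments against $\pol_{r-5}(e)$, and a Hermite count ($4+(r-4)=r$) then forces $\omega|_e=0$. Likewise each component of $(\curl\omega)|_e$ has degree $r-2$; its endpoint values are determined by the first-derivative data in \eqref{UDOFs1}, and \eqref{UDOFs3} kills the remaining $r-3$ moments, so $(\curl\omega)|_e=0$ as well. Hence $\omega$ and $\curl\omega$ vanish on the entire $1$-skeleton of $T$.

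Next I would pass to the faces. Fix $F\in\Delta_2(T)$ and split $\omega|_F$ into its normal part $\omega\cdot n_F$ and tangential part $n_F\times\omega\times n_F$, both of which restrict to zero on $\partial F$ by the edge step. For the normal part, $\omega\cdot n_F$ is a scalar of degree $r-1$ vanishing on $\partial F$, hence a multiple of the face bubble; testing against $\pol_{r-4}(F)$ in \eqref{UDOFs4} and choosing the test function equal to the bubble cofactor gives $\omega\cdot n_F=0$. For the tangential part, the vanishing boundary trace together with the pairing against the Raviart--Thomas space $D_{r-5}(F)$ in \eqref{UDOFs5} is precisely the interior unisolvence of the second-kind tangential trace on $F$ (the counts match, $\dim D_{r-5}(F)=(r-5)(r-3)$), whence $n_F\times\omega\times n_F=0$; thus $\omega|_F=0$. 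An analogous bubble argument, applied to the tangential field $\curl\omega\times n_F$ of degree $r-2$ paired against the tangential part of $[\pol_{r-5}(F)]^3$ in \eqref{UDOFs6}, yields $\curl\omega\times n_F=0$. Finally, since the normal component of the curl equals the scalar surface curl of the tangential trace of $\omega$, the identity $\omega|_F=0$ forces $\curl\omega\cdot n_F=0$, so $(\curl\omega)|_F=0$. Collecting these facts over all faces shows $\omega\in\mathring{M}_{d,r-1}^1(T^\s)$.

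It remains to exploit the two interior sets via the exact complex. Since $\omega\in\mathring{M}_{d,r-1}^1(T^\s)$ we have $\curl\omega\in\curl\mathring{M}_{d,r-1}^1(T^\s)$, so choosing $\kappa=\curl\omega$ in \eqref{UDOFs8} gives $\int_T|\curl\omega|^2\,\mathrm{dx}=0$, i.e.\ $\curl\omega=0$. The exactness of \eqref{eqn:AllSequences1} then produces $\rho\in\mathring{M}_{d,r}^0(T^\s)$ with $\grad\rho=\omega$; choosing $\kappa=\omega=\grad\rho$ in \eqref{UDOFs7} gives $\int_T|\omega|^2\,\mathrm{dx}=0$, so $\omega=0$. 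The main obstacle I anticipate is the face step: one must set up the normal/tangential splitting carefully, invoke the correct interior unisolvence for the second-kind tangential trace paired against $D_{r-5}(F)$, and then verify that the surface-curl identity really does upgrade $\curl\omega\times n_F=0$ to the full trace $(\curl\omega)|_F=0$. By contrast, matching the global dimension count is routine but tedious, and the interior step is immediate once the exactness of \eqref{eqn:AllSequences1} is in hand.
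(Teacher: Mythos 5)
Your overall strategy (dimension count plus injectivity, proceeding edges $\to$ faces $\to$ interior) matches the paper's, and your edge step, your normal-component face step, and your interior step (take $\kappa=\curl\omega$ in \eqref{UDOFs8}, then write $\omega=\grad\rho$ by exactness and take $\kappa=\omega$ in \eqref{UDOFs7}) coincide with the paper's proof. However, the tangential face step contains a genuine gap. You assert that a tangential field on $F$ of degree $r-1$ vanishing on $\p F$ is determined by its moments against $D_{r-5}(F)$, ``the counts matching.'' They do not match: the space of tangential fields $n_F\times\omega\times n_F$ of degree $r-1$ that vanish identically on $\p F$ consists of $b_F$ times tangential fields of degree $r-4$, so it has dimension $2\binom{r-2}{2}=(r-2)(r-3)$, whereas $\dim D_{r-5}(F)=(r-5)(r-3)$. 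There is a deficit of $3(r-3)$ conditions, so nonzero boundary-vanishing tangential fields orthogonal to all of $D_{r-5}(F)$ exist, and no such ``interior unisolvence of the second-kind tangential trace'' holds.

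The paper closes precisely this gap by using \eqref{UDOFs5} twice, in the order opposite to yours. First, \emph{before} attacking the tangential trace itself, it shows ${\rm curl}_F\,\omega = \curl\omega\cdot n_F=0$ on $F$: by Stokes' theorem, $\int_F ({\rm curl}_F\omega)\,q\,\mathrm{dA} = \int_F (\grad q\times n_F)\cdot(n_F\times\omega\times n_F)\,\mathrm{dA}$ for $q\in\pol_{r-5}(F)$ (the boundary term drops since $\omega|_{\p F}=0$), and $\grad q\times n_F\in D_{r-5}(F)$, so these moments vanish by \eqref{UDOFs5}; since ${\rm curl}_F\omega$ has degree $r-2$ and vanishes on $\p F$ (this is where the edge information on $\curl\omega$ from \eqref{UDOFs1}, \eqref{UDOFs3} enters), a bubble argument gives ${\rm curl}_F\omega=0$. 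Only now is the tangential trace curl-free, hence of the form $\omega_F={\rm grad}_F(b_F^2 w)$ with $w\in\pol_{r-6}(F)$, a space of dimension $\binom{r-4}{2}$, which is small enough; a second application of \eqref{UDOFs5}, again via Stokes and the surjectivity of ${\rm div}_F\colon D_{r-5}(F)\to\pol_{r-6}(F)$, then yields $w=0$. In your write-up the identity $\curl\omega\cdot n_F=0$ is deduced only \emph{after} (and from) $\omega|_F=0$, so it cannot be used to repair the tangential step; the logical order must be reversed, with the surface-curl argument coming first.
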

\begin{proof}
The number of conditions is $(r-1)(2r^2-7r+18)$ which equals the dimension of $M_{d,r-1}^1(T^\s)$.
We show that if $\omega \in M_{d,r-1}^{1}(T^\s)$ vanishes
at \eqref{UDOFs}, then $\omega \equiv 0$. In this case, it is easy to see that $\omega = \curl \omega =0$ on all edges,
and that $\curl \omega \times n_F =0$ and $\omega \cdot n_F=0$ on all faces.

To simplify notation, we use the following standard surface differential operators on a face $F$, with normal direction $n_F$ and 
tangential direction $t_F$ on its boundary $\partial F$:
For a smooth scalar field $\phi$, we denote
\begin{align*}
 {\rm grad}_F \phi =&\; n_F \times  \grad \phi \times n_F, \quad 
 { \text{rot}}_F \phi =\; \grad \phi \times n_F,
\end{align*}
and for a smooth vector field $\psi$, we denote
\begin{align*}
 {\rm curl}_F \psi =&\; n_F \cdot {\curl}\,  \psi, \quad
 {\rm div}_F \psi =\;  n_F\cdot \curl(n_F\times \psi).
\end{align*}
We also denote the tangential trace of a smooth vector field $\psi$ on $F$ as 
\[
 \psi_F = n_F \times \psi \times n_F.
\]

Stokes theorem on a face $F\subset \p T$ yields
\begin{align*}
\int_F ({\rm curl}_F \omega) q\, \mathrm{dA}- 
\int_F ({\text{rot}}_F q) \cdot \omega\, \mathrm{dA}=
\int_{\p F} \omega \cdot t q\, \mathrm{ds}
=0.
\end{align*}

For any $q \in  \pol_{r-5}(T)$, ${\rm grad}_F q \in D_{r-5}(F)$ and hence using\eqref{UDOFs5}, we have 
\begin{alignat*}{1}
\int_F ({\rm curl}_F \omega) q\,\mathrm{dA}=&\;
\int_F ({\text{rot}}_F q) \cdot \omega\, \mathrm{dA}\\
=&\;\int_F {\grad} q \cdot (n_F \times \omega)\, \mathrm{dA} \\
=&\; \int_F ({\grad} q \times n_F) \cdot 
(n_F \times \omega \times n_F)\, \mathrm{dA} = 0, \quad  \forall q\in \pol_{r-5}(T).
\end{alignat*}
Since ${\rm \curl}_F \omega \in \pol_{r-2}(F)$ and vanishes on the $\p F$  we conclude that ${\rm curl}_F \omega=0$ on $F$ and therefore 
$\curl \omega \cdot n_F ={\rm curl}_F \omega=0$ on $F$. Since $\curl \omega \times n_F=0$ on $F$ we have that $\curl \omega=0$ on $F$ or that $\curl \omega=0$ on $\p T$.

It then follows that $\omega_F = {\rm grad}_F p$ for some $p \in \pol_{r}(F)$.
Since $\omega$ vanishes on $\p F$, we may assume that $p$ and its derivatives vanishes
on $\p F$ as well.  That is, $\omega_F ={\rm grad}_F(b^2_F w)$ for some $w\in \pol_{r-6}(F)$,
where $b_F\in \pol_3(F)$ is the cubic face bubble corresponding to $F$.
It then follows from Stokes Theorem that, for all $\kappa \in {D}_{r-5}(F)$,
\begin{align*}
0 = \int_F \omega_F \cdot \kappa\, \mathrm{dA} = \int_f {\rm grad}_F (b^2_F w)\cdot \kappa\,\mathrm{dA}
= - \int_F b^2_F w ({\rm div}_F \kappa) \, \mathrm{dA}.
\end{align*}
Since ${\rm div}_F :{D}_{r-5}(F) \to \pol_{r-6}(F)$ is surjective, we conclude
that $w=0$, and so $\omega_F = 0$.  Therefore $\omega|_{\p T} =0$
and $\omega\in \mathring{M}_{d,r-1}(T^\s)$.  Finally, the DOFs \eqref{UDOFs8} implies that $\curl \omega = 0$ on $T$, and the 
DOFs \eqref{UDOFs7} then
give $\omega \equiv 0$.
\end{proof}

\begin{remark}\label{rem:H1curlRemark}
The proof of Lemma \ref{lem:H1curlDOFs}
shows that if $\omega\in M_{d,r-1}^1(T^\s)$ vanishes
on \eqref{UDOFs1}--\eqref{UDOFs6} restricted
to a single face $F\in \Delta_2(T)$, then $\omega|_F = \curl \omega |_F=0$.
\end{remark}

\begin{lemma}
\label{lem:H1DivergenceDOFs}
A function $\omega\in M_{d,r-2}^2(T^\s)$ ($r\ge 5$) is uniquely determined by
the values
\begin{subequations}
\label{eqn:H1DivergenceDOFs}
\begin{alignat}{4}
\label{eqn:H1DivergenceDOFs1}
&\omega(a),\ \Div \omega(a)\quad && \quad &&\forall a\in \Delta_0(T)\quad &&\text{($16$ DOFs)},\\
\label{eqn:H1DivergenceDOFs2}
&\int_e \omega \cdot \kappa\, \mathrm{ds}\quad &&\forall \kappa\in [\pol_{r-4}(e)]^3,\quad &&\forall e\in \Delta_1(T)\quad &&\text{($18(r-3)$ DOFs)},\\
\label{eqn:H1DivergenceDOFs3}
&\int_e (\Div \omega) \kappa\, \mathrm{ds}\quad &&\forall  \kappa\in \pol_{r-5}(e),\quad &&\forall e\in \Delta_1(T)\quad &&\text{($6(r-4)$ DOFs)},\\
\label{eqn:H1DivergenceDOFs4}
&\int_F \omega \cdot \kappa\, \mathrm{dA}\quad &&\forall \kappa \in [\pol_{r-5}(F)]^3,\quad &&\forall F\in \Delta_2(T)\quad &&\text{($6(r-3)(r-4)$ DOFs)},\\
\label{eqn:H1DivergenceDOFs5}
&\int_F (\Div \omega) \kappa\, \mathrm{dA}\quad &&\forall \kappa \in \pol_{r-6}(F),\quad &&\forall F\in \Delta_2(T)\quad &&\text{($2(r-4)(r-5)$ DOFs)},\\
\label{eqn:H1DivergenceDOFs6}
&\int_T \omega \cdot \kappa\, \mathrm{dx}\quad && \forall \kappa\in \curl \mathring{M}_{d,r-1}^1(T^\s), && &&\text{($\frac{(r-4)(r-3)(4r-11)}{3}$ DOFs)},\\
\label{eqn:H1DivergenceDOFs7}
&\int_T (\Div \omega)\kappa\, \mathrm{dx}\quad &&\forall \kappa\in \mathring{M}_{d,r-3}(T^\s),\quad && && \text{($\frac13 (2r-7)(r^2-7r+15)-1$ DOFs)}.
\end{alignat}
\end{subequations}
\end{lemma}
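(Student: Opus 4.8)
The plan is to establish unisolvence in the usual two steps: first verify that the number of functionals in \eqref{eqn:H1DivergenceDOFs} equals $\dim M_{d,r-2}^2(T^\s) = 2r^3-12r^2+32r-30$, and then show that any $\omega\in M_{d,r-2}^2(T^\s)$ annihilated by all the functionals must vanish. Since every element of $M_{d,r-2}^2(T^\s)$ is a componentwise continuous piecewise polynomial with continuous divergence, all the vertex, edge, and face traces of $\omega$ and of $\Div\omega$ are single valued, so the functionals are well defined. The dimension count itself is pure bookkeeping once one notes that the counts attached to \eqref{eqn:H1DivergenceDOFs6} and \eqref{eqn:H1DivergenceDOFs7} are exactly $\dim \curl\mathring{M}_{d,r-1}^1(T^\s)$ and $\dim \mathring{M}_{d,r-3}^3(T^\s)$, and then invokes the formulas of Corollary \ref{cor4} and Remark \ref{rem:StenbergCounting}.

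For the vanishing argument I would proceed from the skeleton inward, exactly as in the proofs of Lemmas \ref{lem:C1-DOF} and \ref{lem:H1curlDOFs}. On each edge $e\in\Delta_1(T)$ we have $\omega|_e\in[\pol_{r-2}(e)]^3$ and $\Div\omega|_e\in\pol_{r-3}(e)$; the endpoint values \eqref{eqn:H1DivergenceDOFs1} together with the edge moments \eqref{eqn:H1DivergenceDOFs2} and \eqref{eqn:H1DivergenceDOFs3} supply, componentwise, exactly $r-1$ and $r-2$ conditions, forcing $\omega|_e=0$ and $\Div\omega|_e=0$ on every edge. Passing to a face $F\in\Delta_2(T)$ (which is a full face of a single subsimplex, so $\omega|_F$ is an honest polynomial), the vanishing on $\p F$ lets me write $\omega|_F=b_F q$ with $q\in[\pol_{r-5}(F)]^3$ and $\Div\omega|_F=b_F p$ with $p\in\pol_{r-6}(F)$, where $b_F$ is the nonnegative cubic face bubble. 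Taking $\kappa=q$ in \eqref{eqn:H1DivergenceDOFs4} and $\kappa=p$ in \eqref{eqn:H1DivergenceDOFs5} gives $\int_F b_F|q|^2=0$ and $\int_F b_F p^2=0$, whence $q=p=0$. Thus $\omega|_F=0$ and $\Div\omega|_F=0$ for all $F\in\Delta_2(T)$, i.e. $\omega\in\mathring{M}_{d,r-2}^2(T^\s)$.

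It remains to treat the interior. Since $\omega|_{\p T}=0$, the divergence theorem gives $\int_T\Div\omega=\int_{\p T}\omega\cdot n=0$, so $\Div\omega\in\mathring{M}_{d,r-3}^3(T^\s)$; choosing $\kappa=\Div\omega$ in \eqref{eqn:H1DivergenceDOFs7} yields $\int_T(\Div\omega)^2=0$ and hence $\Div\omega=0$. Now $\omega\in\mathring{M}_{d,r-2}^2(T^\s)$ is divergence free, so the exactness of the \emph{smooth} complex \eqref{eqn:AllSequences1} (the content of Theorem \ref{mainthm} and its corollaries, translated to three dimensions) produces a potential $\rho\in\mathring{M}_{d,r-1}^1(T^\s)$ with $\curl\rho=\omega$. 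In particular $\omega\in\curl\mathring{M}_{d,r-1}^1(T^\s)$, so taking $\kappa=\omega$ in \eqref{eqn:H1DivergenceDOFs6} gives $\int_T|\omega|^2=0$, and therefore $\omega=0$.

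The delicate points here are organizational rather than conceptual. The main one is the dimension count: one must confirm that the six groups of functionals sum to precisely $\dim M_{d,r-2}^2(T^\s)$, since only then is the injectivity established above equivalent to unisolvence. The second, and the reason the construction is subtle, is that the two interior groups \eqref{eqn:H1DivergenceDOFs6}--\eqref{eqn:H1DivergenceDOFs7} are deliberately matched to the ranges $\curl\mathring{M}_{d,r-1}^1(T^\s)$ and (through the divergence) $\mathring{M}_{d,r-3}^3(T^\s)$, so that the "test against itself" trick closes. This is exactly where the exactness of the smooth sequence \eqref{eqn:AllSequences1}, rather than the classical one \eqref{eqn:FEMStandardDeRham}, is indispensable: a merely $H(\curl)$-conforming potential would not land in $\curl\mathring{M}_{d,r-1}^1(T^\s)$ and the argument would fail.
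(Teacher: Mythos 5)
Your proposal is correct and follows essentially the same route as the paper's proof: count the DOFs, use \eqref{eqn:H1DivergenceDOFs1}--\eqref{eqn:H1DivergenceDOFs5} to force $\omega$ and $\Div\omega$ to vanish on $\partial T$, test \eqref{eqn:H1DivergenceDOFs7} against $\Div\omega\in\mathring{M}_{d,r-3}^3(T^\s)$ to kill the divergence, and then invoke the exactness of the smooth sequence \eqref{eqn:AllSequences1} to write $\omega=\curl\rho$ with $\rho\in\mathring{M}_{d,r-1}^1(T^\s)$ so that \eqref{eqn:H1DivergenceDOFs6} finishes the argument. The only difference is that you spell out the ``standard arguments'' (edge counting, face bubbles, and the divergence theorem giving $\int_T\Div\omega=0$) that the paper leaves implicit, which is a faithful elaboration rather than a different method.
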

\begin{proof}
The number of degrees of freedom equals the dimension of $M_{d,r-2}^2(T^\s)$.
If $\omega$ vanishes at the DOFs, then standard arguments
show that $\omega=0$ and $\Div \omega=0$ on $\p T$ by using \eqref{eqn:H1DivergenceDOFs1}--\eqref{eqn:H1DivergenceDOFs5}.  Therefore $\Div \omega\in \mathring{M}_{d,r-3}^3(T^\s)$, and so \eqref{eqn:H1DivergenceDOFs7} implies that $\Div \omega=0$.
The exactness of the first sequence in \eqref{eqn:AllSequences} shows that $\omega = \curl \rho$ for some $\rho\in \mathring{M}_{d,r-1}^1(T^\s)$,
and therefore, using \eqref{eqn:H1DivergenceDOFs6}, we obtain $\omega\equiv 0$.
\end{proof}

\begin{lemma}
\label{thm:H1PressureDOFs}
Any $\omega\in M_{d,r-3}^3(T^\s)$ is uniquely determined by the degrees of freedom
\begin{subequations}
\label{eqn:H1PressureDOFs}
\begin{alignat}{4}
\label{eqn:H1PressureDOFs1}
&\omega(a)\quad && && \forall a\in \Delta_0(T)\quad &&\text{($4$ DOFs)},\\
\label{eqn:H1PressureDOFs2}
&\int_e \omega \kappa\, \mathrm{ds}\qquad &&\forall \kappa\in \pol_{r-5}(e),\quad &&\forall e\in \Delta_1(T)\quad && \text{($6(r-4)$ DOFs)},\\
\label{eqn:H1PressureDOFs3}
&\int_F \omega \kappa\, \mathrm{dA}\qquad &&\forall \kappa\in \pol_{r-6}(F),\quad &&\forall F\in \Delta_2(T)\quad && \text{($2(r-4)(r-5)$ DOFs)},\\
\label{eqn:H1PressureDOFs4}
&\int_T \omega \, \mathrm{dx} \qquad && && &&\text{($1$ DOFs)},\\
\label{eqn:H1PressureDOFs5}
&\int_T \omega \kappa  \, \mathrm{dx} \qquad &&\forall \kappa \in \mathring{M}_{d,r-3}^3(T^\s), && && \text{($\frac13 (2r-7)(r^2-7r+15)-1$ DOFs)}.
\end{alignat}
\end{subequations}
\end{lemma}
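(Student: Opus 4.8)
The plan is the standard two-step unisolvence argument. First I would verify that the number of functionals in \eqref{eqn:H1PressureDOFs} equals $\dim M_{d,r-3}^3(T^\s)$; since $M_{d,r-3}^3(T^\s)=M_{r-3}^3(T^\s)$, this reduces to summing the cardinalities $4$, $6(r-4)$, $2(r-4)(r-5)$, $1$, and $\frac13(2r-7)(r^2-7r+15)-1$ and comparing with the dimension formula for $M_r^k(T^\s)$ recorded in Remark \ref{rem:CountingFun}, a routine polynomial identity in $r$. It then suffices to show that if $\omega\in M_{d,r-3}^3(T^\s)$ is annihilated by all the DOFs, then $\omega\equiv 0$.

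The first substantive step is to show that the boundary functionals \eqref{eqn:H1PressureDOFs1}--\eqref{eqn:H1PressureDOFs3} force $\omega|_{\p T}=0$. The key observation is that the Alfeld split does not refine the boundary of $T$: each face $F\in\Delta_2(T)$ is a genuine face of exactly one sub-simplex $T_i\in T^\s$, so that $\omega|_F=(\omega|_{T_i})|_F\in\pol_{r-3}(F)$ is a single polynomial. Consequently, when restricted to a fixed face $F$, the functionals \eqref{eqn:H1PressureDOFs1}--\eqref{eqn:H1PressureDOFs3} are exactly the classical Lagrange degrees of freedom for $\pol_{r-3}(F)$, namely the three vertex values, the edge moments against $\pol_{r-5}(e)$, and the interior moments against $\pol_{r-6}(F)$. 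Invoking the standard unisolvence of the Lagrange element in this moment form---reducing successively to the edge bubble and the cubic face bubble and testing against the representing polynomial in each case---gives $\omega|_F=0$ for every $F\in\Delta_2(T)$, and hence $\omega|_{\p T}=0$.

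Next, the mean-value functional \eqref{eqn:H1PressureDOFs4} yields $\int_T\omega\,\mathrm{dx}=0$, so that $\omega\in\mathring{M}_{r-3}^3(T^\s)=\mathring{M}_{d,r-3}^3(T^\s)$. Since $\omega$ is now itself an admissible test function in \eqref{eqn:H1PressureDOFs5}, I would choose $\kappa=\omega$ there to obtain $\int_T\omega^2\,\mathrm{dx}=0$, which forces $\omega\equiv 0$ and completes the proof.

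I do not anticipate a genuine obstacle, since the argument is a direct transcription of the Lagrange unisolvence proof combined with the self-testing device $\kappa=\omega$, in the same spirit as the final step of Lemma \ref{lem:H1DivergenceDOFs}. The only point that warrants care is the boundary reduction: one must use explicitly that the faces of $T$ are not subdivided by the split, so that $\omega|_F$ is a single polynomial and the per-face Lagrange unisolvence applies verbatim. The sharing of vertex and edge functionals between adjacent faces is consistent with the $C^0$ continuity built into $M_{r-3}^3(T^\s)$ and is already accounted for in the dimension count.
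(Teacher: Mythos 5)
Your proposal is correct and follows essentially the same route as the paper's proof: the boundary functionals \eqref{eqn:H1PressureDOFs1}--\eqref{eqn:H1PressureDOFs3} are recognized as the (per-face) Lagrange degrees of freedom, giving $\omega|_{\p T}=0$, after which \eqref{eqn:H1PressureDOFs4} places $\omega$ in $\mathring{M}_{d,r-3}^3(T^\s)$ and taking $\kappa=\omega$ in \eqref{eqn:H1PressureDOFs5} forces $\omega\equiv 0$. You simply supply details the paper leaves implicit, in particular the observation that the Alfeld split does not subdivide the boundary faces, so $\omega|_F$ is a single polynomial and the classical Lagrange unisolvence applies verbatim.
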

\begin{proof}
The boundary degrees of freedom \eqref{eqn:H1PressureDOFs}
are simply the Lagrange degrees of freedom, and so if $\omega\in M_{d,r-3}^3(T^\s)$ vanishes
on \eqref{eqn:H1PressureDOFs1}--\eqref{eqn:H1PressureDOFs3}, then we easily conclude
that $\omega|_{\p T}=0$.  If, in addition, $\omega$ vanishes on \eqref{eqn:H1PressureDOFs4}--\eqref{eqn:H1PressureDOFs5},
then we easily obtain that $\omega=0$, and that the degrees of freedom are unisolvent.
\end{proof}

With the degrees of freedom for $M_{d,r-k}^k(T^\s)$ established, 
we turn our attention to the continuous finite element spaces, $\Mrc(T^\s)$
and $M_{r-2}^2(T^\s)$.  The degrees of freedom of the former is given
in the next lemma.

\begin{lemma}\label{lem:VecHermiteDOFs}
Let $r\ge 5$.
A function 
$\omega\in \Mrc(T^\s)$ is uniquely determined by the values
\begin{subequations}
\label{eqn:VecHermiteDOFs}
\begin{alignat}{4}
\label{eqn:VecHermiteDOFs1}
&D^\alpha \omega(a)\qquad &&|\alpha|\le 1\quad &&\forall a\in \Delta_0(T)\qquad &&\text{($48$ DOFs)},\\
\label{eqn:VecHermiteDOFs2}
&\int_e \omega\cdot\kappa\,\mathrm{ds} \quad &&\forall \kappa\in [\pol_{r-5}(e)]^3 &&\forall e\in \Delta_1(T)\qquad &&
\hspace{-1cm}\text{($18(r-4)$ DOFs)},\\
\label{eqn:VecHermiteDOFs3}
&
\int_e (\curl \omega|_F \cdot n_F)\kappa\, \mathrm{ds}\quad &&\forall \kappa\in \pol_{r-4}(e)&& \forall e\in \Delta_1(F),\ \forall F\in \Delta_2(T)\ \  &&\text{($12(r-3)$ DOFs)},\\
\label{eqn:VecHermiteDOFs4}
&\int_F (\omega\cdot n_F)\kappa\, \mathrm{dA}\quad &&\forall \kappa\in \pol_{r-4}(F) &&\forall F\in \Delta_2(T) && 
\hspace{-1.25cm}\text{($2(r-2)(r-3)$ DOFs)},\\
\label{eqn:VecHermiteDOFs6}
&\int_F (n_F\times \omega \times n_F)\cdot \kappa \, \mathrm{dA}\quad &&\forall \kappa\in D_{r-5}(F) &&\forall F\in \Delta_2(T) && 
\hspace{-1.25cm}\text{($4(r-3)(r-5)$ DOFs)},\\
\label{eqn:VecHermiteDOFs7}
&\int_K \omega \cdot \kappa\, \mathrm{dx} \quad &&\forall \kappa\in \grad \mathring{M}_{d,r}^0(T^\s)
&& &&\hspace{-2.25cm} \text{($\frac{2(r-4)(r-3)(r-2)}{3}$ DOFs)},\\
\label{eqn:VecHermiteDOFs8}
&\int_K \curl \omega \cdot \kappa\, \mathrm{dx} \qquad &&\forall \kappa\in \curl \Mrco(T^\s)
&& && \hspace{-2.25cm} \text{($\frac{(r-3)(4r^2+3r+14)}3$ DOFs)},
\end{alignat}
\end{subequations}
where we recall that $D_{r-5}(F)$ is the local Raviart--Thomas space on the face $F$.
\end{lemma}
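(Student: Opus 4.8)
The plan is to follow the standard template for unisolvence, mirroring the argument already used for Lemma~\ref{lem:H1curlDOFs}. First I would verify that the number of functionals listed in \eqref{eqn:VecHermiteDOFs} equals $\dim \Mrc(T^\s) = (r-2)(2r^2+r+9)$ from Remark~\ref{rem:StenbergCounting}; this is a routine (if tedious) polynomial identity in $r$. It then suffices to show that any $\omega \in \Mrc(T^\s)$ annihilated by every functional in \eqref{eqn:VecHermiteDOFs} vanishes identically, and I would establish this by peeling off the trace on successively higher-dimensional skeleta: first the edges, then the faces, and finally the interior.

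\emph{Edges.} On a fixed edge $e \in \Delta_1(T)$ with endpoints $a_0,a_1$, the restriction $\omega|_e$ lies in $[\pol_{r-1}(e)]^3$. The vertex functionals \eqref{eqn:VecHermiteDOFs1} supply the value $\omega(a_i)$ and the tangential derivative $\partial_{t_e}\omega(a_i)$ at each endpoint, and \eqref{eqn:VecHermiteDOFs2} supplies the interior moments against $[\pol_{r-5}(e)]^3$. Componentwise this is precisely Hermite interpolation on an interval, which is unisolvent, so $\omega|_e = 0$ for every $e\in\Delta_1(T)$.

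\emph{Faces.} Fix $F\in\Delta_2(T)$ and split $\omega|_F$ into its normal part $(\omega\cdot n_F)n_F$ and its tangential part $\omega_F := n_F\times\omega\times n_F$; both vanish on $\partial F$ since $\omega$ vanishes on the edges. For the normal part, $\omega\cdot n_F\in\pol_{r-1}(F)$ vanishes on $\partial F$, hence factors as $b_F q$ with $q\in\pol_{r-4}(F)$ and $b_F$ the cubic face bubble; taking $\kappa = q$ in \eqref{eqn:VecHermiteDOFs4} gives $\int_F b_F q^2 = 0$, so $\omega\cdot n_F=0$ on $F$. The tangential part is the delicate step. The vertex gradients \eqref{eqn:VecHermiteDOFs1} force $\curl\omega(a_i)=0$, and combined with \eqref{eqn:VecHermiteDOFs3} this pins down ${\rm curl}_F\omega=\curl\omega\cdot n_F$ to zero on each edge of $F$. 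Exactly as in Lemma~\ref{lem:H1curlDOFs}, Stokes' theorem converts $\int_F ({\rm curl}_F\omega)\,q$ for $q\in\pol_{r-5}(F)$ into a pairing of $\omega_F$ against ${\rm grad}_F q\in D_{r-5}(F)$, which is annihilated by \eqref{eqn:VecHermiteDOFs6}; since ${\rm curl}_F\omega$ vanishes on $\partial F$ it factors through $b_F$, and a bubble test yields ${\rm curl}_F\omega=0$ on $F$. Then $\omega_F$ is curl-free with vanishing tangential trace on $\partial F$, so on the simply connected $F$ it equals ${\rm grad}_F(b_F^2 w)$ for some $w\in\pol_{r-6}(F)$; integrating by parts in \eqref{eqn:VecHermiteDOFs6} and using the surjectivity of ${\rm div}_F:D_{r-5}(F)\to\pol_{r-6}(F)$ forces $w=0$, whence $\omega_F=0$. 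Thus $\omega|_F=0$ for all faces and $\omega\in\Mrco(T^\s)$.

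\emph{Interior.} Once $\omega\in\Mrco(T^\s)$, its curl lies in $\curl\Mrco(T^\s)$, so choosing $\kappa=\curl\omega$ in \eqref{eqn:VecHermiteDOFs8} gives $\int_T|\curl\omega|^2=0$ and hence $\curl\omega=0$ on $T$. By the exactness statement \eqref{vector1} there is a potential $\rho\in\mathring{M}_{d,r}^0(T^\s)$ with $\grad\rho=\omega$, so $\omega\in\grad\mathring{M}_{d,r}^0(T^\s)$; taking $\kappa=\omega$ in \eqref{eqn:VecHermiteDOFs7} gives $\int_T|\omega|^2=0$, so $\omega\equiv 0$. The main obstacle is the tangential face argument: one must first kill ${\rm curl}_F\omega$ (which needs both the vertex data and the Stokes-theorem identity tying it to the Raviart--Thomas-tested tangential moments) before the scalar-potential-plus-bubble reduction can be applied, and this is where the precise choice of the test space $D_{r-5}(F)$ in \eqref{eqn:VecHermiteDOFs6} is essential, since the same functionals must serve double duty in controlling both ${\rm curl}_F\omega$ and $\omega_F$. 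The interior step, by contrast, is immediate once the exactness results of the previous section are invoked.
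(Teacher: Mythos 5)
Your proposal is correct and takes essentially the same route as the paper's proof: a dimension count followed by a kernel argument that annihilates $\omega$ on edges, then on faces (normal component by bubble-weighted moments from \eqref{eqn:VecHermiteDOFs4}, tangential component via the ${\rm curl}_F$/Stokes identity tested against \eqref{eqn:VecHermiteDOFs6} and the surjectivity of ${\rm div}_F: D_{r-5}(F)\to \pol_{r-6}(F)$), and finally in the interior via \eqref{eqn:VecHermiteDOFs8}, the exactness of the complex, and \eqref{eqn:VecHermiteDOFs7}. You merely spell out steps the paper compresses by citing the analogous proof of Lemma \ref{lem:H1curlDOFs} (the edge Hermite argument, the normal-trace bubble test, and the explicit use of \eqref{vector1} in the interior step), so there is nothing to correct.
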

\begin{proof}
The total number of conditions in \eqref{eqn:VecHermiteDOFs} 
is $(r-2)(2r^2+r+9)$ which equals the dimension of $\Mrc(T^\s)$.
Suppose that $\omega\in \Mrc(T^\s)$ vanishes on the DOFs.
We show that $\omega \equiv 0$.  This is done by
adopting similar arguments as the proof of Lemma \ref{lem:H1curlDOFs}. 

In this case, it is easy to see that $\omega|_e=0$ on all $e\in \Delta_1(T)$, 
$(\curl \omega|_F)\cdot n_F|_e=0$ on all $e\in \Delta_1(F)$ and $F\in \Delta_2(T)$,
and $\omega\cdot n_F|_F=0$ on all $F\in \Delta_2(T)$.

Applying Stokes Theorem, we find that, for any $q\in \pol_{r-5}(T)$,
\begin{align*}
\int_F ({\rm curl}_F \omega)q\, \mathrm{dA} 
& = \int_F ({\rm rot}_F q)\cdot \omega\, \mathrm{dA}\\
& = \int_F \grad \cdot (n_F \times \omega)\, \mathrm{dA}\\
& = \int_F (\grad q\times n_F)\cdot (n_F \times \omega \times n_F)\, \mathrm{dA}=0,
\end{align*}
where we have used \eqref{eqn:VecHermiteDOFs6} in the last equality.
Because ${\rm curl}_F \omega = (\curl \omega)\cdot n_F$,
and $(\curl \omega \cdot n_F)$ vanishes on the edges of $F$,
we conclude that ${\rm curl}_F \omega=0$ on $F$.

By using the same arguments
as in Lemma \ref{lem:H1curlDOFs}, we conclude
that $\omega_F = {\rm grad}_F (b_F^2 w)$ for some $w\in \pol_{r-6}(F)$.
Consequently, we have
\begin{align*}
0 = \int_F \omega_F \cdot \kappa\, \mathrm{dA} = -\int_F b_F^2 w ({\rm div}_F \kappa)\, \mathrm{dA}\quad \forall \kappa\in D_{r-5}(F),
\end{align*}
and therefore $w=0$ and $\omega_F|_F=0$.  Thus, $\omega=0$ on $F$.
Finally, it follows from \eqref{eqn:VecHermiteDOFs8} that
$\curl \omega=0$, and therefore, by \eqref{eqn:VecHermiteDOFs7}, $\omega=0$.
\end{proof}

\begin{lemma}\label{lem:H1divDOFs}
Let $r\ge 5$, then
a function $\omega \in M_{r-2}^2 (T^\s)$ is uniquely determined by
the following degrees of freedom:
\begin{subequations}
\label{XDOFs}
\begin{alignat}{4}
\label{XDOFs1}
&\omega(a)\qquad &&
&& \forall a\in \Delta_0(T)\quad &&(\text{$12$ DOFs}),\\
\label{XDOFs2}
&\int_e \omega \cdot \kappa\,\mathrm{ds},\qquad &&\forall \kappa\in [\pol_{r-4}(e)]^3,\quad &&\forall e\in\Delta_1(T)\qquad &&
\text{($18(r-3)$ DOFs)},
\\
\label{XDOFs3}
&\int_f \omega \cdot \kappa\, \mathrm{dA} \quad &&\forall \kappa\in [\pol_{r-5}(F)]^3,\quad &&\forall F\in \Delta_2(T)\qquad &&\text{($6(r-4)(r-3)$ DOFs)}\\
\label{XDOFs4}
&\int_T \omega \cdot \kappa \, \mathrm{dx} \quad &&\forall \kappa\in 
\curl \mathring{M}_{d,r-1}^1(T^\s), && &&
\!\!\!\!\!\!\!\!\text{($\frac{(r-4)(r-3)(4r-11)}{3}$ DOFs)},\\
\label{XDOFs5}
&\int_T (\dive \omega)\, \kappa \, \mathrm{dx} \quad &&\forall \kappa\in \mathring{V}_{d,r-3}(T^\s), && &&
\!\!\!\!
\!\text{($\frac{2(r-2)(r-1)r}{3}-1$ DOFs)}.
\end{alignat}
\end{subequations}
\end{lemma}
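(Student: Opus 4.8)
The plan is to follow the template of Lemmas \ref{lem:H1curlDOFs} and \ref{lem:H1DivergenceDOFs}. First I would check that the number of conditions listed in \eqref{XDOFs} equals $\dim M_{r-2}^2(T^\s)$, which by Remark \ref{rem:CountingFun} is $3\big[\binom{r+2}{4}-\binom{r-2}{4}\big]$; summing the counts attached to \eqref{XDOFs1}--\eqref{XDOFs5} is a routine binomial computation. Having matched the counts, it suffices to show that any $\omega\in M_{r-2}^2(T^\s)$ annihilated by all the degrees of freedom vanishes identically.

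The first step is a standard Lagrange/bubble reduction to the boundary $\partial T$. Because each boundary face $F=F_i$ is an undivided face of a single sub-simplex $T_i$, the restriction $\omega|_F$ is a single polynomial vector of degree $r-2$, and similarly $\omega|_e\in [\pol_{r-2}(e)]^3$ on each edge $e\in \Delta_1(T)$. The vertex values \eqref{XDOFs1} together with the edge moments \eqref{XDOFs2} determine $\omega|_e$ completely, so $\omega|_e=0$ on every edge. Writing $\omega|_F=b_F\,\widetilde{\omega}$ with the cubic face bubble $b_F$ and $\widetilde{\omega}\in [\pol_{r-5}(F)]^3$, the face moments \eqref{XDOFs3} tested against $\kappa=\widetilde{\omega}$ (using $b_F\ge 0$) force $\widetilde{\omega}=0$. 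Hence $\omega|_{\partial T}=0$, i.e.\ $\omega\in \mathring{M}_{r-2}^2(T^\s)$.

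Next I would use the two interior families in the correct order. Since $\omega$ is continuous, its divergence is well defined with $\dive\omega\in V_{r-3}^3(T^\s)$, and because $\omega|_{\partial T}=0$ we have $\int_T \dive\omega=\int_{\partial T}\omega\cdot n=0$, so $\dive\omega\in \mathring{V}_{d,r-3}^3(T^\s)$. Choosing $\kappa=\dive\omega$ in \eqref{XDOFs5} gives $\int_T |\dive\omega|^2\,\mathrm{dx}=0$, hence $\dive\omega=0$. Now $\omega$ lies in the kernel of $\dive$ on $\mathring{M}_{r-2}^2(T^\s)$, and here is the crucial point: by the exactness of the second sequence \eqref{eqn:AllSequences2} at $\mathring{M}_{r-2}^2(T^\s)$, this kernel is precisely $\curl\,\mathring{M}_{d,r-1}^1(T^\s)$, so $\omega\in \curl\,\mathring{M}_{d,r-1}^1(T^\s)$. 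Taking $\kappa=\omega$ in \eqref{XDOFs4} then yields $\int_T |\omega|^2\,\mathrm{dx}=0$, whence $\omega\equiv 0$.

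The hard part will be identifying the correct exact sequence in the final step. It is not enough to know merely that a divergence-free $\omega$ is \emph{some} curl; to test $\omega$ against itself in \eqref{XDOFs4} one needs $\omega$ to lie in the image of the \emph{continuous} space $\mathring{M}_{d,r-1}^1(T^\s)$ under $\curl$, and this is exactly what the exactness of \eqref{eqn:AllSequences2} supplies. Indeed this is why the test space in \eqref{XDOFs4} was chosen as $\curl\,\mathring{M}_{d,r-1}^1(T^\s)$ rather than the curl of a larger, less smooth space. The boundary reduction, by contrast, is routine and mirrors the corresponding arguments in Lemmas \ref{lem:H1curlDOFs} and \ref{lem:H1DivergenceDOFs}.
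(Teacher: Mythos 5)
Your proof is correct and follows essentially the same route as the paper's: match the DOF count to $\dim M_{r-2}^2(T^\s)$, reduce to $\omega|_{\partial T}=0$ via \eqref{XDOFs1}--\eqref{XDOFs3}, deduce $\dive \omega=0$ from \eqref{XDOFs5} (after noting $\int_T \dive\omega = 0$), and conclude $\omega\equiv 0$ from exactness together with \eqref{XDOFs4}. Your citation of the boundary-condition sequence \eqref{eqn:AllSequences2} in the last step is in fact the precise one needed --- it is what places $\omega$ in $\curl\,\mathring{M}_{d,r-1}^1(T^\s)$ so that it can be tested against itself --- whereas the paper's text cites \eqref{eqn:AllSequencesC2}, which appears to be a slip, since exactness of the sequence without boundary conditions only yields a potential in $M_{d,r-1}^1(T^\s)$ and would not close the argument.
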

\begin{proof}
 The number of conditions is $2r^3-9r^2+19r-15$ which equals the dimension of $M_{r-2}^2(T^\s)$.
We show that if $\omega \in M_{r-2}^2(T^\s)$ vanishes
at \eqref{XDOFs}, then $\omega \equiv 0$.

In this case, it is easy to see that $\omega= 0$ on the boundary $\partial T$ by the DOFs \eqref{XDOFs1}--\eqref{XDOFs3}.
Hence, $\omega \in \mathring{M}_{r-2}^{2}(T^\s)$. 
Then, the DOFs \eqref{XDOFs5} implies that $\dive \omega =0$. Finally, the exactness of the sequence \eqref{eqn:AllSequencesC2} and 
the DOFs \eqref{XDOFs4} implies that $\omega \equiv 0$.
\end{proof}

A unisolvent set of degrees of freedom 
of the spaces $\Vrc(T^\s)$
is given in the following lemma.  The result
essentially follows from \cite{Stenberg10}.
\begin{lemma}\label{lem:StenbergHDiv}
Let $r\ge 5$, then a function $\omega \in \Vrc(T^\s)$ is uniquely determined 
by the values
\begin{subequations}
\label{eqn:StenbergHDiv}
\begin{alignat}{4}
\label{eqn:StenbergHDiv1}
&\omega(a)\quad && && \forall a\in \Delta_0(T)\quad &&\text{($12$ DOFs)},\\
\label{eqn:StenbergHDiv2}
&\int_e (\omega\cdot n_F) \kappa\, \mathrm{ds}\ &&\forall \kappa \in \pol_{r-4}(e)\ \ &&\forall e\in \Delta_1(F),\ \forall F\in \Delta_2(T)\ &&\text{($12(r-3)$ DOFs),}\\
\label{eqn:StenbergHDiv3}
&\int_F (\omega\cdot n_F)\kappa\, \mathrm{dA} \quad &&\forall \kappa\in {\pol}_{r-5}(F) 
\quad &&\forall f\in \Delta_2(T)\qquad &&\text{($2(r-3)(r-4)$ DOFs)},\\
\label{eqn:StenbergHDiv4}
&\int_T \omega \cdot \kappa\, \mathrm{dx} \quad &&\forall \kappa\in \curl \Mrco(T^\s)
&& && \hspace{-0.5cm}  \text{($\frac{(r-3)(4r^2+3r+14)}{3}$ DOFs)},\\
\label{eqn:StenbergHDiv5}
&\int_T (\Div\omega) \kappa \, \mathrm{dx}\quad &&\forall \kappa\in \mathring{V}_{d,r-3}^3(T^\s) 
&& && \hspace{-0.5cm}  \text{($\frac{2 (r-2)(r-1)r}3 -1$ DOFs)}.
\end{alignat}
\end{subequations}
\end{lemma}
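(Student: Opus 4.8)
The plan is to follow the template of the preceding unisolvence lemmas (Lemmas \ref{lem:H1DivergenceDOFs} and \ref{lem:H1divDOFs}): first confirm that the number of conditions in \eqref{eqn:StenbergHDiv} equals $\dim \Vrc(T^\s) = (2r-5)(r^2+r+3)$ using the counts collected in Remark \ref{rem:StenbergCounting}, and then show that any $\omega \in \Vrc(T^\s)$ annihilated by all the listed DOFs vanishes identically. Since the boundary of $T$ is not refined in the Alfeld split, on each face $F\in \Delta_2(T)$ the normal trace $\omega\cdot n_F|_F$ is a single (single-valued, since $\omega$ is $H(\Div)$-conforming) polynomial in $\pol_{r-2}(F)$. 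First I would show this trace vanishes: the vertex DOFs \eqref{eqn:StenbergHDiv1} supply the values $\omega(a)\cdot n_F$ at the three vertices of $F$, the edge DOFs \eqref{eqn:StenbergHDiv2} supply the moments of $\omega\cdot n_F$ against $\pol_{r-4}(e)$ on the three edges of $F$, and the face DOFs \eqref{eqn:StenbergHDiv3} supply the interior moments against $\pol_{r-5}(F)$. This is precisely the Lagrange-type data determining a degree-$(r-2)$ polynomial on a triangle (the counts $3+3(r-3)+\tfrac12(r-3)(r-4)=\tfrac12 r(r-1)$ match), so $\omega\cdot n_F=0$ on every $F$, i.e.\ $\omega\in \Vrco(T^\s)$.

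Next comes the interior. Because $\omega\cdot n=0$ on $\p T$, the divergence theorem gives $\int_T \Div\omega=0$, so $\Div\omega\in \mathring{V}_{d,r-3}^3(T^\s)$; choosing $\kappa=\Div\omega$ in \eqref{eqn:StenbergHDiv5} then forces $\Div\omega=0$. Finally, I would invoke exactness to write $\omega=\curl\rho$ with $\rho\in \Mrco(T^\s)$, substitute $\kappa=\omega=\curl\rho$ into \eqref{eqn:StenbergHDiv4}, and obtain $\int_T |\omega|^2=0$, hence $\omega\equiv 0$.

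The hard part is the very last step: it requires that every divergence-free $\omega\in \Vrco(T^\s)$ lie in $\curl\Mrco(T^\s)$, which is the homogeneous (boundary-conditioned) analogue of the exactness of \eqref{eqn:AllSequencesC23} proved in Theorem \ref{thm:AllSequencesC2Exact}. The ambient exactness of \eqref{eqn:AllSequencesC23} only yields a potential $\rho\in \Mrc(T^\s)$ \emph{without} boundary conditions, and it is not immediate that such a $\rho$ can be corrected by a gradient to lie in $\Mrco(T^\s)$ while preserving the $C^1$ vertex continuity. I would therefore establish the homogeneous exactness directly, by the same rank-nullity argument used for Theorem \ref{thm:AllSequencesC2Exact}, applied to the sequence $\mathring{M}_{d,r}^0(T^\s)\xrightarrow{\grad}\Mrco(T^\s)\xrightarrow{\curl}\Vrco(T^\s)\xrightarrow{\Div}\mathring{V}_{d,r-3}^3(T^\s)$: surjectivity of $\Div$ onto $\mathring{V}_{d,r-3}^3(T^\s)$ follows as in the divergence step above, and then matching $\dim\ker\Vrco(T^\s)$ against $\dim\range\Mrco(T^\s)$ via the dimension formulas for $\dim\Mrco$ and $\dim\Vrco$ in Remark \ref{rem:StenbergCounting} together with the count in Corollary \ref{cor4} closes the complex. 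Alternatively, one can simply cite \cite{Stenberg10} for the full unisolvence, from which the present DOFs are a reorganization.
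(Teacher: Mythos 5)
Your proposal is correct, but it takes a genuinely different route from the paper: the paper gives \emph{no} proof of Lemma \ref{lem:StenbergHDiv} at all, stating only that the result ``essentially follows from \cite{Stenberg10}.'' Your argument is a complete, self-contained unisolvence proof in the template of Lemmas \ref{lem:H1DivergenceDOFs} and \ref{lem:H1divDOFs}: the total number of conditions in \eqref{eqn:StenbergHDiv} does equal $\dim\Vrc(T^\s)=(2r-5)(r^2+r+3)=2r^3-3r^2+r-15$; the face-by-face elimination of $\omega\cdot n_F$ from the Lagrange-type data \eqref{eqn:StenbergHDiv1}--\eqref{eqn:StenbergHDiv3} is the standard bubble argument (and is legitimate since the faces of $T$ are unrefined in $T^\s$, so $\omega\cdot n_F|_F\in\pol_{r-2}(F)$); and the interior is killed by \eqref{eqn:StenbergHDiv5} and \eqref{eqn:StenbergHDiv4}. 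The most valuable part of your proposal is precisely the point the paper leaves implicit: the final step requires that every divergence-free field in $\Vrco(T^\s)$ be the curl of an element of $\Mrco(T^\s)$, i.e.\ exactness of the boundary-condition analogue of \eqref{eqn:AllSequencesC23}, and this is proved nowhere in the paper---Theorem \ref{thm:AllSequencesC2Exact} treats only the ambient sequence, and, as you note, correcting an ambient potential $\rho\in\Mrc(T^\s)$ to one in $\Mrco(T^\s)$ is not immediate. Your rank-nullity argument closes this cleanly: surjectivity of $\Div:\Vrco(T^\s)\to\mathring{V}_{d,r-3}^3(T^\s)$ follows from the inclusion $\mathring{M}_{r-2}^2(T^\s)\subset\Vrco(T^\s)$ and the exactness of \eqref{eqn:AllSequences2}; the identification of $\ker(\curl)\cap\Mrco(T^\s)$ with $\grad\mathring{M}_{d,r}^0(T^\s)$ follows from \eqref{vector1}; and both $\dim\ker(\Div)\cap\Vrco(T^\s)$ and $\dim\curl\Mrco(T^\s)$ evaluate to $\frac{(r-3)(4r^2+3r+14)}{3}$, the count already attached to \eqref{eqn:StenbergHDiv4} (one also needs the easy inclusion $\curl\Mrco(T^\s)\subset\Vrco(T^\s)$, which holds since $C^1$ vertex continuity and vanishing tangential traces of $\rho$ give $C^0$ vertex continuity and vanishing normal traces of $\curl\rho$). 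What the paper's citation buys is brevity; what your proof buys is an explicit check that the interior DOFs \eqref{eqn:StenbergHDiv4}--\eqref{eqn:StenbergHDiv5}---which are \emph{not} Stenberg's, having been redesigned to make the diagram in Theorem \ref{thm:LocalProjectionThree} commute---remain unisolvent, together with a homogeneous exactness statement that the paper in fact never records.
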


\begin{lemma}\label{lem:L2DOFs}
Let $r\ge 5$, then
a function $\omega \in V_{d,r-3}^3 (T^\s)$ is uniquely determined by
the following degrees of freedom:
\begin{subequations}
\label{WDOFs}
\begin{alignat}{4}
\label{WDOFs1}
&\int_T \omega \, \mathrm{dx}, && &&\quad\quad \text{($1$ DOFs)},\\
\label{WDOFs2}
&\int_T \omega \, \kappa \, \mathrm{dx} \quad &&\forall \kappa\in \mathring{V}_{d,r-3}^3(T^\s), &&\quad\quad
\text{($\frac{2(r-2)(r-1)r}{3}-1$ DOFs)}.
\end{alignat}
\end{subequations}
\end{lemma}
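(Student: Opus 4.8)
The plan is to establish unisolvence by the standard two-step argument. First I would confirm that the total number of functionals in \eqref{WDOFs} equals $\dim V_{d,r-3}^3(T^\s)$, so that unisolvence reduces to showing that any $\omega$ annihilating all the functionals must vanish. Since $V_{d,r-3}^3(T^\s) = V_{r-3}^3(T^\s)$ is the space of piecewise polynomials of degree $r-3$ on the four subsimplices with no continuity imposed, its dimension is $4\dim\pol_{r-3}(T_0) = 4\binom{r}{3} = \tfrac{2r(r-1)(r-2)}{3}$. Adding the single functional \eqref{WDOFs1} to the $\tfrac{2(r-2)(r-1)r}{3}-1$ functionals in \eqref{WDOFs2} produces exactly this count, so it suffices to prove that vanishing degrees of freedom force $\omega\equiv 0$.

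For the unisolvence itself, suppose $\omega \in V_{d,r-3}^3(T^\s)$ vanishes at every DOF. The functional \eqref{WDOFs1} gives $\int_T \omega\,\mathrm{dx} = 0$, which is precisely the defining constraint of $\mathring{V}_{d,r-3}^3(T^\s)$; hence $\omega \in \mathring{V}_{d,r-3}^3(T^\s)$. This membership is the crux of the argument, since it makes $\omega$ itself an admissible test function in \eqref{WDOFs2}. Choosing $\kappa = \omega$ there then yields $\int_T \omega^2\,\mathrm{dx} = 0$, and therefore $\omega \equiv 0$, which completes the proof.

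I do not anticipate any genuine obstacle: this is the simplest of the unisolvence lemmas in this section, and it mirrors exactly the $L^2$ argument used in Lemma \ref{thm:H1PressureDOFs}. The only point requiring minor care is recognizing that $\mathring{V}_{d,r-3}^3(T^\s)$ is by definition the mean-value-zero subspace of $V_{d,r-3}^3(T^\s)$, so that the first functional both forces $\omega$ into this subspace and legitimizes the substitution $\kappa = \omega$ in the second set of functionals.
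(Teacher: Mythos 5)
Your proposal is correct, and it is precisely the argument the paper leaves implicit: the paper's proof of this lemma is literally the single word ``Trivial,'' and your two steps (the DOF count $1+\bigl(\tfrac{2(r-2)(r-1)r}{3}-1\bigr)=4\binom{r}{3}=\dim V_{d,r-3}^3(T^\s)$, then using \eqref{WDOFs1} to place $\omega$ in $\mathring{V}_{d,r-3}^3(T^\s)$ so that $\kappa=\omega$ is admissible in \eqref{WDOFs2}) are exactly the intended filling-in. No gaps.
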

\begin{proof}
 Trivial.
\end{proof}

\subsection{Commuting Projections}
In this section we show that
the degrees of freedom given in the previous section
yield  projections that commute with the differential operators.
We first consider the sequence with highest smoothness.

\begin{theorem}\label{thm:LocalProjectionTwo}
 Let $\Pi_{d,0}: C^\infty(T)\rightarrow M_{d,r}^0(T^\s)$ be the projection induced by the DOFs \eqref{eqn:SigmaDOFs}, that is, 
 \[
  \phi(\Pi_{d,0} p) = \phi(p), \quad \forall \phi \in \sf{DOFs} \text{ in \eqref{eqn:SigmaDOFs}}.
 \]
  Likewise, let $\Pi_{d,1}:[ C^\infty(T)]^3\rightarrow M_{d,r-1}^1(T^\s)$ be the   projection induced by the DOFs \eqref{UDOFs},
  $\Pi_{d,2}:[C^\infty(T)]^3 \rightarrow M_{d,r-2}^2(T^\s)$ be the   projection induced by the DOFs \eqref{eqn:H1DivergenceDOFs},
  and $\Pi_{d,3}: C^\infty(T) \rightarrow M^3_{d,r-3}(T^\s)$ be the  projection induced by the DOFs \eqref{eqn:H1PressureDOFs}.
  Then for $r\ge 5$ the following diagram commutes
  \begin{alignat*}{2}
  \begin{array}{ccccccccccccc}
&\mathbb{R}& {\xrightarrow{\hspace*{0.1cm}}}&
 C^\infty(T)\!\!\!
&\stackrel{\grad}{\xrightarrow{\hspace*{0.5cm}}}&
\!\!\![C^\infty(T)]^3
\!\!\!&\stackrel{\curl}{\xrightarrow{\hspace*{0.5cm}}}&
\!\!\!  [C^\infty(T)]^3
\!\!\!\!& \stackrel{{\rm div}}{\xrightarrow{\hspace*{0.5cm}}}& 
\!\!\! C^\infty(T)
&
\!\!\!\xrightarrow{\hspace*{0.1cm}}  &0\\
&&&\;\;\;\;\;\;\;\;\;\;\;\;\downarrow {\scriptstyle \Pi_{d,0}} & &
\downarrow{\scriptstyle \Pi_{d,1}}
& &\downarrow{\scriptstyle  \Pi_{d,2}}
& &
\;\;\;\;\downarrow{\scriptstyle \Pi_{d,3}}\\
&\mathbb{R}& {\xrightarrow{\hspace*{0.1cm}}}&
M_{d,r}^0(T^\s)\!\!\!
&\stackrel{\grad}{\xrightarrow{\hspace*{0.5cm}}}&
M_{d,r-1}^1(T^\s) 
\!\!\!&\stackrel{\curl}{\xrightarrow{\hspace*{0.5cm}}}&
\!\!\!M_{d,r-2}^2(T^\s)
\!\!\!\!& \stackrel{{\rm div}}{\xrightarrow{\hspace*{0.5cm}}}& 
\!\!\!M^3_{d,r-3}(T^\s)
&
\!\!\!\xrightarrow{\hspace*{0.1cm}}  &0.
\end{array}
\end{alignat*}
More specifically, we have 
\begin{subequations}
\label{ABCcommute}
\begin{alignat}{2}
\label{ABCcommute-1}
\grad\, \Pi_{d,0} p = &\; \Pi_{d,1}\,\grad p,&& \quad \forall p \in C^\infty(T)\\
\label{ABCcommute-2}
\curl\, \Pi_{d,1} p = &\; \Pi_{d,2}\,\curl  p,&&\quad 
\forall p\in  [C^\infty(T)]^3,\\
\label{ABCcommute-3}
\dive \, \Pi_{d,2}  p =  &\;
\Pi_{d,3}\,\dive p,&&\quad 
\forall p\in \bld [C^\infty(T)]^3.
\end{alignat}
\end{subequations}
\end{theorem}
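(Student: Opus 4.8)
The plan is to prove each of \eqref{ABCcommute-1}--\eqref{ABCcommute-3} by a single template. Given a relation $d\,\Pi_{d,k}p = \Pi_{d,k+1}\,dp$, I would first note that both sides lie in the target space: since $\Pi_{d,k}p$ belongs to the corresponding $M_{d,\cdot}^k(T^\s)$ and these spaces form a complex, $d\,\Pi_{d,k}p$ lands in $M_{d,\cdot}^{k+1}(T^\s)$ (the required $C^0$-continuity of the further derivative being automatic because $d\circ d=0$), while $\Pi_{d,k+1}\,dp$ lies there by construction. By unisolvence of the target DOFs, the two forms coincide as soon as every target degree of freedom takes the same value on them. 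Since $\phi(\Pi_{d,k+1}\,dp)=\phi(dp)$ for each target DOF $\phi$ by the defining property of $\Pi_{d,k+1}$, the entire task reduces to verifying $\phi(d\,\Pi_{d,k}p)=\phi(dp)$ for each $\phi$.

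I would treat \eqref{ABCcommute-2} as the representative case and argue family by family over the DOFs \eqref{eqn:H1DivergenceDOFs} of $M_{d,r-2}^2(T^\s)$. The divergence-type functionals---the $\Div$ part of \eqref{eqn:H1DivergenceDOFs1}, the edge functionals \eqref{eqn:H1DivergenceDOFs3}, and the functionals \eqref{eqn:H1DivergenceDOFs5} and \eqref{eqn:H1DivergenceDOFs7}---vanish identically on both $\curl \Pi_{d,1}p$ and $\curl p$, because $\Div\curl\equiv 0$. The vertex value in \eqref{eqn:H1DivergenceDOFs1} applied to $\curl \Pi_{d,1}p$ is $\curl \Pi_{d,1}p(a)$, which is pinned down by the first-order vertex DOFs \eqref{UDOFs1} of $\Pi_{d,1}p$ and hence equals $\curl p(a)$; the edge functionals \eqref{eqn:H1DivergenceDOFs2} applied to $\curl \Pi_{d,1}p$ are literally the edge functionals \eqref{UDOFs3} applied to $\Pi_{d,1}p$ (the test spaces $[\pol_{r-4}(e)]^3$ agree), so they reproduce the values on $\curl p$. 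For the face functionals \eqref{eqn:H1DivergenceDOFs4}, I would invoke Remark \ref{rem:H1curlRemark}: the face-restricted DOFs \eqref{UDOFs1}--\eqref{UDOFs6} determine $\curl \Pi_{d,1}p|_F$, whence $\curl \Pi_{d,1}p|_F=\curl p|_F$ and the face integrals match. Finally, the interior functional \eqref{eqn:H1DivergenceDOFs6} tests against $\kappa\in\curl\mathring{M}_{d,r-1}^1(T^\s)$, which is precisely the interior DOF \eqref{UDOFs8} of $M_{d,r-1}^1(T^\s)$, so it too agrees.

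The identities \eqref{ABCcommute-1} and \eqref{ABCcommute-3} follow the same bookkeeping. For \eqref{ABCcommute-1}, the second-order vertex data in \eqref{eqn:C1DOF1} feed the first-order vertex DOFs \eqref{UDOFs1}; the normal-derivative edge and face DOFs \eqref{eqn:C1DOF22} and \eqref{eqn:C1DOF32} reproduce the normal-trace DOF \eqref{UDOFs4}; the tangential face trace is supplied by Remark \ref{rem:H2Remark}; the curl-type functionals \eqref{UDOFs3}, \eqref{UDOFs6}, \eqref{UDOFs8} vanish on $\grad \Pi_{d,0}p$ since $\curl\grad\equiv 0$; and the interior Dirichlet functional \eqref{eqn:C1DOF4} coincides exactly with \eqref{UDOFs7}. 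For \eqref{ABCcommute-3}, the Lagrange functionals \eqref{eqn:H1PressureDOFs1}--\eqref{eqn:H1PressureDOFs3} of $M_{d,r-3}^3(T^\s)$ applied to $\dive \Pi_{d,2}p$ reduce, family by family and with matching test degrees, to the $\Div$-functionals of \eqref{eqn:H1DivergenceDOFs}, the interior functional \eqref{eqn:H1PressureDOFs5} coincides with \eqref{eqn:H1DivergenceDOFs7}, and the single mean-value DOF \eqref{eqn:H1PressureDOFs4} is handled by the divergence theorem together with the fact, established in the proof of Lemma \ref{lem:H1DivergenceDOFs}, that the boundary DOFs of $M_{d,r-2}^2(T^\s)$ pin down $\Pi_{d,2}p|_{\p T}$ (so that $\int_{\p T}\Pi_{d,2}p\cdot n=\int_{\p T}p\cdot n$).

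I expect the main obstacle to be the boundary (vertex, edge, and face) functionals rather than the interior ones. The interior DOFs were engineered so that the differential functional of the source space is the plain $L^2$ functional of the target space against the same test set, so those identities hold by definition. By contrast, the boundary functionals require knowing that the trace of $d\,\Pi_{d,k}p$ on each sub-simplex is already determined by the corresponding trace DOFs of the source space; this is exactly the content of the face-restriction statements in Remarks \ref{rem:H2Remark} and \ref{rem:H1curlRemark}, together with the edge-trace statements embedded in the proofs of Lemmas \ref{lem:C1-DOF} and \ref{lem:H1curlDOFs}. The crux of the write-up is therefore to organize the verification so that each boundary functional of the target space is systematically reduced to one of these trace results.
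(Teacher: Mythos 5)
Your overall template is the paper's: form the difference $\rho$ of the two sides, observe that $\rho$ lies in the target finite element space (the extra continuity of its derivative being automatic since $d\circ d=0$), and verify that every target degree of freedom vanishes on $\rho$, so that unisolvence of the target DOFs gives $\rho\equiv 0$. Your treatment of the families that come "for free" is also correct and agrees with the paper: the divergence-type functionals in \eqref{eqn:H1DivergenceDOFs1}, \eqref{eqn:H1DivergenceDOFs3}, \eqref{eqn:H1DivergenceDOFs5}, \eqref{eqn:H1DivergenceDOFs7} vanish because $\dive\curl\equiv 0$ (and the curl-type ones in the proof of \eqref{ABCcommute-1} because $\curl\grad\equiv 0$); the vertex functionals are linear combinations of the source vertex DOFs \eqref{UDOFs1}; \eqref{eqn:H1DivergenceDOFs2} is literally \eqref{UDOFs3}; and \eqref{eqn:H1DivergenceDOFs6} is literally \eqref{UDOFs8}.

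The gap is exactly at the step you yourself call the crux. You propose to dispatch the face functionals \eqref{eqn:H1DivergenceDOFs4} by citing Remark \ref{rem:H1curlRemark} to conclude $\curl\Pi_{d,1}p|_F=\curl p|_F$ (and likewise Remark \ref{rem:H2Remark} for the tangential face trace in \eqref{ABCcommute-1}, and a ``pinned down'' boundary trace of $\Pi_{d,2}p$ in \eqref{ABCcommute-3}). This reasoning is not valid. The Remarks are kernel statements about elements of the piecewise-polynomial spaces: by linearity they compare two members of $M_{d,r-1}^1(T^\s)$ with equal face-restricted DOFs, but $p$ is smooth and not in the space, so they cannot be applied to $\Pi_{d,1}p-p$. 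Indeed the asserted conclusion is false in general: $\curl\Pi_{d,1}p|_F$ is a polynomial of degree at most $r-2$, while $\curl p|_F$ is an arbitrary smooth field, and finitely many matching functionals cannot force a smooth function to coincide with a polynomial. What is true---and all that is needed---is equality of moments, $\int_F \curl(\Pi_{d,1}p-p)\cdot\kappa\,\mathrm{dA}=0$ for $\kappa\in[\pol_{r-5}(F)]^3$, and establishing this requires exhibiting the moment functional, acting on smooth fields, as a combination of source DOFs. That is what the paper's proof does and what your outline omits: the tangential part is matched directly by \eqref{UDOFs6}, while the normal part is converted by the surface Stokes theorem, $\int_F {\rm curl}_F(\Pi_{d,1}p-p)\,\kappa\,\mathrm{dA} = \int_F {\rm rot}_F\kappa\cdot(\Pi_{d,1}p-p)\,\mathrm{dA}+\int_{\p F}(\Pi_{d,1}p-p)\cdot t\,\kappa\,\mathrm{ds}$, into moments controlled by \eqref{UDOFs5} and \eqref{UDOFs2}. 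The same integration-by-parts mechanism (along edges and faces, using \eqref{eqn:C1DOF1}, \eqref{eqn:C1DOF21}, \eqref{eqn:C1DOF22}, \eqref{eqn:C1DOF31}) is what handles the families \eqref{UDOFs2} and \eqref{UDOFs5} in the proof of \eqref{ABCcommute-1}, a family your sketch does not address at all. Finally, for the mean-value DOF \eqref{eqn:H1PressureDOFs4} no trace statement is needed: $\int_{\p T}(\Pi_{d,2}p-p)\cdot n\,\mathrm{dA}=0$ follows from \eqref{eqn:H1DivergenceDOFs4} alone by taking the constant test field $n_F$ on each face (admissible since $r\ge 5$). Replacing your appeals to the two Remarks by these integration-by-parts identities is precisely what turns your outline into the paper's proof.
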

\begin{proof}

(i) {\em Proof of \eqref{ABCcommute-1}.}
We take $p\in C^\infty(T)$. 
Since $\rho:=\grad\, \Pi_0 p- \Pi_1\,\grad p \in M_{d,r-1}^1(T^\s)$, we only need to prove that 
$\rho$ vanishes at the DOFs \eqref{UDOFs}. 

For the vertex-based terms, 
we have, for all $|\alpha|\le 1$ and $a\in \Delta_0(T)$, 
\begin{align*}
 D^\alpha \rho(a) = 
 D^{\alpha}(\grad\, \Pi_{d,0} p(a)
 - \Pi_{d,1} \grad p(a)) = 
 0,
\end{align*}
by the definition of $\Pi_{d,0}$, $\Pi_{d,1}$ and the DOFs \eqref{eqn:C1DOF1}, \eqref{UDOFs1}.

For the edge-based terms, we have, for all $\kappa\in [\pol_{r-5}(e)]^3$,
\begin{align*}
\int_e \rho\cdot\kappa\,\mathrm{ds}
= & \;
\int_e(\grad\, \Pi_{d,0} p-\grad p)\cdot\kappa\,\mathrm{ds} \quad & \text{by} \eqref{UDOFs2} \\
= &\;
\int_e\sum_{i\in\{+,-\}}
\frac{\partial(\Pi_{d,0} p- p)}{\partial n_e^{i}}(\kappa\cdot n_e^{i})\,\mathrm{ds}
+
\int_e
\frac{\partial(\Pi_{d,0} p- p)}{\partial t_e}(\kappa\cdot t_e)\,\mathrm{ds}\\
= &
\int_e \frac{\partial(\Pi_{d,0} p- p)}{\partial t_e}(\kappa\cdot t_e)\,\mathrm{ds} \quad & \text{ by } \eqref{eqn:C1DOF22} \\
=& -\int_e
(\Pi_0
 p- p)\frac{\partial(\kappa\cdot t_e)}{\partial t_e}\,\mathrm{ds}\quad & \text{ by } \eqref{eqn:C1DOF1} \\
 =&0, \quad & \text{by} \eqref{eqn:C1DOF21}
\end{align*}

We also have by the definition of $\Pi_{d,1}$ and \eqref{UDOFs3}
\begin{align*}
\int_e(\curl \rho)\cdot\kappa\,\mathrm{ds}
= &\;
\int_e(\curl(\grad\, (\Pi_{d,0} p-p))\cdot\kappa\,\mathrm{ds}=0 \quad \forall \kappa\in [\pol_{r-4}(e)]^3
\end{align*}

For the face-based terms, we have, for all $\kappa\in \pol_{r-4}(F)$,
\begin{align*}
 \int_F(\rho\cdot n_F)\kappa \,\mathrm{dA}
 = 
  \int_F(\grad\Pi_{d,0} p- \Pi_{d,1} \grad p)\cdot n_F\kappa \,\mathrm{dA} = 0
\end{align*}
by the the definitions of $\Pi_{d,0}$, $\Pi_{d,1}$  and the DOFs \eqref{eqn:C1DOF32}, \eqref{UDOFs4}.
We also have, for all $\kappa\in {D}_{r-5}(F)$, using the definition of $\Pi_{d,1}$ and \eqref{UDOFs5},
\begin{align*}
\int_F ({n_F\times }\rho \times n_F) \cdot \kappa\, \mathrm{dA}
=&\int_F ({n_F\times } \grad (\Pi_{d,0} p-p)  \times n_F) \cdot \kappa\, \mathrm{dA} \\
=&\; \int_F {\rm grad}_F(\Pi_{d,0} p- p) \cdot \kappa\, \mathrm{dA}\\
=&\;
-\int_F (\Pi_{d,0} p-  p) {\rm div}_{F} \kappa\, \mathrm{dA}
+
\int_{\partial F} (\Pi_{d,0} p-  p) \kappa \cdot n_{\p F} \, \mathrm{ds},
\end{align*}
where $n_{\p F}$ is unit normals tangent to $F$ and perpendicular to the edges of $F$. 
Since for $\kappa \in  {D}_{r-5}(F)$, $ {\rm div}_F \kappa \in \pol_{r-6}(F)$, and 
$\kappa \cdot n_{\p F}|_e \in \pol_{r-6}(e)$ for all three edges $e$ of $F$, the right hand side of the 
above expression vanishes by the DOFs \eqref{eqn:C1DOF21} and \eqref{eqn:C1DOF31}.
Moreover, we have by the definition of $\Pi_{d,1}$ and \eqref{UDOFs6}, 
\begin{align*}
\int_F ({\curl \rho \times n_F}) \cdot \kappa\, \mathrm{dA}
=&\;
\int_F ({\curl \grad(\Pi_{d,0} p- p
) \times n_F}) \cdot \kappa\, \mathrm{dA}
=0 \quad \forall  \kappa\in [\pol_{r-5}(F)]^3.
\end{align*}

For the cell-based terms, we have,  for $\kappa\in \grad \mathring{M}_{d,r}^{0}(T^\s)$, 
\begin{align*}
\int_T \rho \cdot \kappa\, \mathrm{dx}
=&\;
\int_T (\grad \Pi_{d,0} p- \Pi_1 \grad p
) \cdot \kappa\, \mathrm{dx}
=0
\end{align*}
using the definitions of $\Pi_{d,0}, \Pi_{d,1}$ and the DOFs \eqref{eqn:C1DOF4} and  \eqref{UDOFs7}. 
We also have using  definition of $\Pi_{d,1}$ and \eqref{UDOFs8},
\begin{align*}
\int_T \curl \rho \cdot \kappa\, \mathrm{dx}
=&\;
\int_T (\curl\grad(\Pi_{d,0} p- p
)) \cdot \kappa\, \mathrm{dx}
=0.
\end{align*}
Combining the above results, we conclude that $\rho = \grad\, \Pi_{d,0} p-  \Pi_{d,1}\,\grad p = 0$. This completes the proof for the 
identity \eqref{ABCcommute-1}.

(ii) {\em Proof of \eqref{ABCcommute-2}}: Let $p\in [C^\infty(T)]^3$ and set $\rho = \curl \Pi_{d,1} p - \Pi_{d,2} \curl p\in M_{d,r-2}^2(T^\s)$.  
 We show that $\rho$ vanishes at the DOFs \eqref{eqn:H1DivergenceDOFs}.
 
 First, we have for all $a\in \Delta_0(T)$,
 \[
 \rho(a) = (\curl \Pi_{d,1} p) (a) - (\Pi_{d,2} \curl p)(a) = 0
 \]
 by \eqref{UDOFs1} and \eqref{eqn:H1DivergenceDOFs1}.  Furthermore, we have
 \[
 \Div \rho (a) = -\Div \Pi_{d,2} \curl p(a) = - \Div \curl p(a) = 0
 \]
 by \eqref{eqn:H1DivergenceDOFs1}.  Similar arguments show that
\begin{alignat*}{4}
&\int_e (\Div \rho) \kappa\, \mathrm{ds}=0\qquad &&\forall \kappa\in \pol_{r-5}(e),\quad &&\forall e\in \Delta_1(T),\\
&\int_F (\Div \rho)\kappa\, \mathrm{dA}=0\qquad &&\forall \kappa\in \pol_{r-6}(F),\quad &&\forall F\in \Delta_2(T),\\
&\int_T (\Div \rho)\kappa\, \mathrm{dx}=0\qquad && && \forall \kappa\in \mathring{M}_{d,r-3}(T^\s)
\end{alignat*} 
by using \eqref{eqn:H1DivergenceDOFs3}, \eqref{eqn:H1DivergenceDOFs5}, and \eqref{eqn:H1DivergenceDOFs7}.
 
Next we have, for $\kappa\in [\pol_{r-4}(e)]^3$,
\begin{align*}
\int_e \rho\cdot \kappa\, \mathrm{ds} = \int_e (\curl \Pi_{d,1} p - \curl p)\cdot \kappa\, \mathrm{ds} =0
\end{align*}
by \eqref{UDOFs3} and \eqref{eqn:H1DivergenceDOFs2}.  

Let $F\in \Delta_2(T)$ and $\kappa\in [\pol_{r-5}(F)]^3$.  We then have
\begin{align*}
\int_F (\rho \times n_F) \cdot \kappa\, \mathrm{dA} = \int_F (\curl \Pi_{d,1} p - \curl p)\times n_F \cdot  \kappa\, \mathrm{dA}=0
\end{align*}
by \eqref{UDOFs6} and \eqref{eqn:H1DivergenceDOFs4}.  We also have, for $\kappa\in \pol_{r-5}(F)$,
\begin{align*}
\int_F (\rho \cdot n_F)\kappa\, \mathrm{dA} 
&= \int_F (\curl \Pi_{d,1} p - \curl p)\cdot n_F \kappa\, \mathrm{dA}\\
&= \int_F ({\rm curl}_F \Pi_{d,1} p - {\rm curl}_F p)\kappa\, \mathrm{dA}\\
& = \int_F ({\rm rot}_F \kappa \cdot (\Pi_{d,1} p - p)\, \mathrm{dA} + \int_{\p F} (\Pi_{d,1} p - p)\cdot t_{\p F} \kappa\, \mathrm{ds}=0
\end{align*}
by \eqref{eqn:H1DivergenceDOFs4}, \eqref{UDOFs5}, and \eqref{UDOFs2}.

Finally, we have for all $\kappa\in \curl \mathring{M}_{d,r-1}^1(T^\s)$,
\begin{align*}
\int_T \rho\cdot \kappa\, \mathrm{dx} = \int_T (\curl \Pi_{d,1} p - \curl p)\cdot \kappa\, \mathrm{dx}=0
\end{align*}
by \eqref{UDOFs8} and \eqref{eqn:H1DivergenceDOFs6}.  Thus, $\rho$ vanishes
on \eqref{eqn:H1DivergenceDOFs}, and so $\rho\equiv 0$.

(iii) {\em Proof of \eqref{ABCcommute-3}}: Let $p\in [C^\infty(T)]^3$ and set $\rho = \Div \Pi_{d,2} p - \Pi_{d,3} \Div p\in M_{d,r-3}^3(T^\s)$.  
Similar to parts (i)--(ii), we show that $\rho$ vanishes on \eqref{eqn:H1PressureDOFs}.

First, we clearly have $\rho(a)=0$ by \eqref{eqn:H1PressureDOFs1} and \eqref{eqn:H1DivergenceDOFs1}.
We further have
\begin{align*}
\int_e \rho \kappa\, \mathrm{ds}=0\qquad \forall \kappa\in \pol_{r-5}(e),\quad \forall e\in \Delta_1(T)
\end{align*}
by \eqref{eqn:H1PressureDOFs2} and \eqref{eqn:H1DivergenceDOFs3};
\begin{align*}
\int_F \rho \kappa\, \mathrm{dA}=0\qquad \forall \kappa\in \pol_{r-6}(F),\quad \forall F\in \Delta_2(T)
\end{align*}
by \eqref{eqn:H1PressureDOFs3} and \eqref{eqn:H1DivergenceDOFs5}; and
\begin{align*}
\int_T \rho \kappa\, \mathrm{dx}=0\qquad \forall \kappa\in \mathring{M}_{d,r-3}^3(T^\s)
\end{align*}
by \eqref{eqn:H1PressureDOFs5} and \eqref{eqn:H1DivergenceDOFs7}.  Finally, we have
\begin{align*}
\int_T \rho\, \mathrm{dx} 
&= \int_T \big(\Div \Pi_{d,2} p - \Div p\big)\, \mathrm{dx}\\
& = \int_{\p T} (\Pi_{d,2} p - p)\cdot n\, \mathrm{dA}=0
\end{align*}
by \eqref{eqn:H1PressureDOFs4} and \eqref{eqn:H1DivergenceDOFs4}.
Thus, $\rho$ vanishes on \eqref{eqn:H1PressureDOFs}, and so $\rho\equiv 0$.
\end{proof}

By using similar arguments
as those given in Theorem \ref{thm:LocalProjectionTwo},
we obtain commutative projections
for the second sequence in \eqref{eqn:AllSequencesC2}.
The proof is given in the appendix.
\begin{theorem}\label{thm:LocalProjectionOne}
 Let $\Pi_{d,0}: C^\infty(T)\rightarrow M_{d,r}^0(T^\s)$ be the projection induced by the DOFs \eqref{eqn:SigmaDOFs}, 
  $\Pi_{d,1}:[ C^\infty(T)]^3\rightarrow M_{d,r-1}^1(T^\s)$ be    projection induced by the DOFs \eqref{UDOFs},
  $\Pi_{2}:[C^\infty(T)]^3 \rightarrow M_{r-2}^2(T^\s)$ be the   projection induced by the DOFs \eqref{XDOFs},
  and $\Pi_{3}: C^\infty(T) \rightarrow V^3_{d,r-3}(T^\s)$ be the   projection induced by the DOFs \eqref{WDOFs}.
  Then for $r\ge 5$ the following diagram commutes
  \begin{alignat*}{2}
  \begin{array}{ccccccccccccc}
&\mathbb{R}& {\xrightarrow{\hspace*{0.1cm}}}&
 C^\infty(T)\!\!\!
&\stackrel{\grad}{\xrightarrow{\hspace*{0.5cm}}}&
\!\!\![C^\infty(T)]^3
\!\!\!&\stackrel{\curl}{\xrightarrow{\hspace*{0.5cm}}}&
\!\!\!  [C^\infty(T)]^3
\!\!\!\!& \stackrel{{\rm div}}{\xrightarrow{\hspace*{0.5cm}}}& 
\!\!\! C^\infty(T)
&
\!\!\!\xrightarrow{\hspace*{0.1cm}}  &0\\
&&&\;\;\;\;\;\;\;\;\;\;\;\;\downarrow {\scriptstyle \Pi_{d,0}} & &
\downarrow{\scriptstyle \Pi_{d,1}}
& &\downarrow{\scriptstyle  \Pi_2}
& &
\;\;\;\;\downarrow{\scriptstyle \Pi_3}\\
&\mathbb{R}& {\xrightarrow{\hspace*{0.1cm}}}&
M_{d,r}^0(T^\s)\!\!\!
&\stackrel{\grad}{\xrightarrow{\hspace*{0.5cm}}}&
M_{d,r-1}^1(T^\s) 
\!\!\!&\stackrel{\curl}{\xrightarrow{\hspace*{0.5cm}}}&
\!\!\!M_{r-2}^2(T^\s)
\!\!\!\!& \stackrel{{\rm div}}{\xrightarrow{\hspace*{0.5cm}}}& 
\!\!\!V^3_{d,r-3}(T^\s)
&
\!\!\!\xrightarrow{\hspace*{0.1cm}}  &0.
\end{array}
\end{alignat*}
More specifically, we have 
\begin{subequations}
\label{commute}
\begin{alignat}{2}
\label{commute-1}
\grad\, \Pi_{d,0} p = &\; \Pi_{d,1}\,\grad p,&& \quad \forall p \in C^\infty(T)\\
\label{commute-2}
\curl\, \Pi_{d,1} p = &\; \Pi_2\,\curl  p,&&\quad 
\forall p\in  [C^\infty(T)]^3,\\
\label{commute-3}
\dive \, \Pi_2  p =  &\;
\Pi_3\,\dive p,&&\quad 
\forall p\in \bld [C^\infty(T)]^3.
\end{alignat}
\end{subequations}
\end{theorem}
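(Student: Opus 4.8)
The plan is to mirror the proof of Theorem \ref{thm:LocalProjectionTwo} almost verbatim, taking advantage of the fact that the first two projections $\Pi_{d,0}$ and $\Pi_{d,1}$ are the very same maps used there. Consequently \eqref{commute-1} coincides with \eqref{ABCcommute-1} and requires no new argument, so I would prove only \eqref{commute-2} and \eqref{commute-3}, which involve the modified spaces $M_{r-2}^2(T^\s)$ and $V_{d,r-3}^3(T^\s)$. In each case I would set $\rho$ equal to the difference of the two sides, observe that $\rho$ belongs to the relevant target space, and then show that $\rho$ annihilates every degree of freedom of that space; unisolvence (Lemma \ref{lem:H1divDOFs} for \eqref{commute-2} and Lemma \ref{lem:L2DOFs} for \eqref{commute-3}) then forces $\rho\equiv 0$.

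For \eqref{commute-2} I would take $\rho=\curl\,\Pi_{d,1}p-\Pi_2\,\curl p\in M_{r-2}^2(T^\s)$ and check the degrees of freedom \eqref{XDOFs} one family at a time. The vertex values \eqref{XDOFs1} and the edge moments \eqref{XDOFs2} follow at once: $\Pi_{d,1}$ reproduces $\curl p$ at the vertices and along the edges through \eqref{UDOFs1} and \eqref{UDOFs3}, while $\Pi_2$ reproduces $\curl p$ through \eqref{XDOFs1} and \eqref{XDOFs2}. For the full face moments \eqref{XDOFs3} I would split a test field $\kappa\in[\pol_{r-5}(F)]^3$ into its tangential and normal components on $F$. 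The tangential part is matched using \eqref{UDOFs6}, since $\curl\,\omega\times n_F$ determines the tangential trace of $\curl\,\omega$. The normal part, via the identity $\curl\,\omega\cdot n_F={\rm curl}_F\,\omega$ and surface integration by parts (Stokes on $F$), reduces to moments of $\Pi_{d,1}p-p$ against ${\rm rot}_F q$ over $F$ and against $t_{\p F}q$ over $\p F$, with $q=\kappa\cdot n_F\in\pol_{r-5}(F)$; these vanish because ${\rm rot}_F q\in D_{r-5}(F)$ is admissible in \eqref{UDOFs5} and $t_{\p F}q\in[\pol_{r-5}(e)]^3$ is admissible in \eqref{UDOFs2}. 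The interior curl moments \eqref{XDOFs4} follow from \eqref{UDOFs8}. Finally, for the interior divergence moments \eqref{XDOFs5} I would use $\dive\,\curl\equiv0$, so that $\dive\,\rho=-\dive\,\Pi_2\,\curl p$, and then invoke the defining relation \eqref{XDOFs5} of $\Pi_2$ to obtain $\int_T(\dive\,\Pi_2\,\curl p)\,\kappa=\int_T(\dive\,\curl p)\,\kappa=0$.

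For \eqref{commute-3} I would take $\rho=\dive\,\Pi_2 p-\Pi_3\,\dive p\in V_{d,r-3}^3(T^\s)$ and verify the two degrees of freedom \eqref{WDOFs}. The weighted moments \eqref{WDOFs2} reduce, after applying \eqref{WDOFs2} for $\Pi_3$, to $\int_T(\dive\,\Pi_2 p-\dive p)\,\kappa=0$, which is precisely the defining relation \eqref{XDOFs5} of $\Pi_2$. For the mean-value degree of freedom \eqref{WDOFs1}, I would use the definition of $\Pi_3$ and the divergence theorem to write $\int_T\rho=\int_{\p T}(\Pi_2 p-p)\cdot n$, and then note that, since $n_F$ is constant on each flat face $F$ and $r\ge5$, the constant field $n_F$ is an admissible test function in \eqref{XDOFs3}; hence each face contribution $\int_F(\Pi_2 p-p)\cdot n_F$ vanishes.

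I expect the only genuinely delicate step to be the normal part of the face moment in \eqref{commute-2}. It is the single place where no direct degree-of-freedom-to-degree-of-freedom comparison is available; instead one must pass through the surface Stokes identity, correctly identify ${\rm rot}_F$ of a degree-$(r-5)$ scalar as an element of the Raviart--Thomas space $D_{r-5}(F)$ governing \eqref{UDOFs5}, and simultaneously control the resulting $\p F$ boundary terms by the edge moments \eqref{UDOFs2}. Every other verification is a routine bookkeeping match between the two families of degrees of freedom.
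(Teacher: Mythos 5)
Your proposal is correct, and your treatments of \eqref{commute-1} and \eqref{commute-3} coincide with the paper's (for \eqref{commute-3} the paper likewise checks \eqref{WDOFs} via the divergence theorem, the admissibility of the constant field $n_F$ in \eqref{XDOFs3}, the defining relation \eqref{XDOFs5} of $\Pi_2$, and Lemma \ref{lem:L2DOFs}). For \eqref{commute-2}, however, you take a genuinely different route. The paper never compares $\curl\,\Pi_{d,1}p$ with $\Pi_2\,\curl p$ directly: it sets $\omega = \Pi_{d,2}\,\curl p - \Pi_2\,\curl p \in M_{r-2}^2(T^\s)$, where $\Pi_{d,2}$ is the projection onto the \emph{smoother} space $M_{d,r-2}^2(T^\s)$ from Theorem \ref{thm:LocalProjectionTwo}. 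Since the DOFs \eqref{XDOFs} are essentially a subset of the DOFs \eqref{eqn:H1DivergenceDOFs} (vertex values, edge moments, full face moments, interior moments against $\curl\mathring{M}_{d,r-1}^1(T^\s)$), both projections reproduce the same moments of $\curl p$, so $\omega$ kills \eqref{XDOFs1}--\eqref{XDOFs4} by pure bookkeeping; the divergence moments \eqref{XDOFs5} vanish because \eqref{ABCcommute-2} gives $\dive\,\Pi_{d,2}\curl p = \dive\,\curl\,\Pi_{d,1}p = 0$, while $\Pi_2$ preserves them by definition. Lemma \ref{lem:H1divDOFs} then gives $\Pi_{d,2}\curl p = \Pi_2\curl p$, and \eqref{ABCcommute-2} once more yields $\curl\,\Pi_{d,1}p = \Pi_{d,2}\curl p = \Pi_2\curl p$. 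What this factorization buys is that the surface-Stokes computation --- the step you rightly single out as the only delicate one (splitting the face test field into tangential and normal parts, recognizing ${\rm rot}_F q \in D_{r-5}(F)$ so that \eqref{UDOFs5} applies, and absorbing the $\p F$ terms into \eqref{UDOFs2}) --- is performed once, inside the proof of Theorem \ref{thm:LocalProjectionTwo}, and never repeated. Your proof inlines that computation a second time; it is self-contained and all details check out (the reductions of \eqref{XDOFs1}--\eqref{XDOFs2} to \eqref{UDOFs1} and \eqref{UDOFs3}, of \eqref{XDOFs4} to \eqref{UDOFs8}, and of \eqref{XDOFs5} to $\dive\,\curl \equiv 0$ plus the defining relation of $\Pi_2$ are all valid), but it duplicates work that the paper's argument deliberately reuses.
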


Finally, we state the commutative projections
for the third sequence in \eqref{eqn:AllSequencesC2}.
The proof is given in the appendix. 
\begin{theorem}\label{thm:LocalProjectionThree}
 Let $\Pi_{d,0}: C^\infty(T)\rightarrow M_{d,r}^0(T^\s)$ be the projection induced by the DOFs \eqref{eqn:SigmaDOFs},
  $\Pi_{c,1}:[ C^\infty(T)]^3\rightarrow \Mrc(T^\s)$ be the   projection induced by the DOFs \eqref{eqn:VecHermiteDOFs},
  $\Pi_{c,2}:[C^\infty(T)]^3 \rightarrow \Vrc(T^\s)$ be the   projection induced by the DOFs \eqref{eqn:StenbergHDiv},
  and $\Pi_{3}: C^\infty(T) \rightarrow V^3_{d,r-3}(T^\s)$ be the   projection induced by the DOFs \eqref{WDOFs}.
  Then, 
  the following diagram commutes
  \begin{alignat*}{2}
  \begin{array}{ccccccccccccc}
&\mathbb{R}& {\xrightarrow{\hspace*{0.1cm}}}&
 C^\infty(T)\!\!\!
&\stackrel{\grad}{\xrightarrow{\hspace*{0.5cm}}}&
\!\!\![C^\infty(T)]^3
\!\!\!&\stackrel{\curl}{\xrightarrow{\hspace*{0.5cm}}}&
\!\!\!  [C^\infty(T)]^3
\!\!\!\!& \stackrel{{\rm div}}{\xrightarrow{\hspace*{0.5cm}}}& 
\!\!\! C^\infty(T)
&
\!\!\!\xrightarrow{\hspace*{0.1cm}}  &0\\
&&&\;\;\;\;\;\;\;\;\;\;\;\;\downarrow {\scriptstyle \Pi_{d,0}} & &
\downarrow{\scriptstyle \Pi_{c,1}}
& &\downarrow{\scriptstyle  \Pi_{c,2}}
& &
\;\;\;\;\downarrow{\scriptstyle \Pi_3}\\
&\mathbb{R}& {\xrightarrow{\hspace*{0.1cm}}}&
M_{d,r}^0(T^\s)\!\!\!
&\stackrel{\grad}{\xrightarrow{\hspace*{0.5cm}}}&
\Mrc(T^\s) 
\!\!\!&\stackrel{\curl}{\xrightarrow{\hspace*{0.5cm}}}&
\!\!\!\Vrc(T^\s)
\!\!\!\!& \stackrel{{\rm div}}{\xrightarrow{\hspace*{0.5cm}}}& 
\!\!\!V^3_{d,r-3}(T^\s)
&
\!\!\!\xrightarrow{\hspace*{0.1cm}}  &0.
\end{array}
\end{alignat*}
More specifically, we have 
\begin{subequations}
\label{commuteABD}
\begin{alignat}{2}
\label{DEFcommute-1}
\grad\, \Pi_{d,0} p = &\; \Pi_{c,1}\,\grad p,&& \quad \forall p \in C^\infty(T)\\
\label{DEFcommute-2}
\curl\, \Pi_{c,1} p = &\; \Pi_{c,2}\,\curl  p,&&\quad 
\forall p\in  [C^\infty(T)]^3,\\
\label{DEFcommute-3}
\dive \, \Pi_{c,2}  p =  &\;
\Pi_3\,\dive p,&&\quad 
\forall p\in \bld [C^\infty(T)]^3.
\end{alignat}
\end{subequations}
\end{theorem}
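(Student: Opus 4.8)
The plan is to establish each of \eqref{DEFcommute-1}--\eqref{DEFcommute-3} by the same device used in Theorem \ref{thm:LocalProjectionTwo}: for a fixed identity I form the difference $\rho$ of its two sides, note that $\rho$ lies in the relevant target space ($\Mrc(T^\s)$, $\Vrc(T^\s)$, or $V_{d,r-3}^3(T^\s)$), and then show that $\rho$ annihilates every degree of freedom of that space. Unisolvency (Lemmas \ref{lem:VecHermiteDOFs}, \ref{lem:StenbergHDiv}, \ref{lem:L2DOFs}) then forces $\rho\equiv 0$. The only tools needed are the identities $\curl\grad=0$ and $\dive\curl=0$, the surface Stokes identities from Lemma \ref{lem:H1curlDOFs}, and the defining relation $\phi(\Pi q)=\phi(q)$ of each projection.

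For \eqref{DEFcommute-1} I take $\rho=\grad\,\Pi_{d,0}p-\Pi_{c,1}\,\grad p\in\Mrc(T^\s)$ and verify the DOFs \eqref{eqn:VecHermiteDOFs}. Since the DOF set \eqref{eqn:VecHermiteDOFs} has the same structure as the set \eqref{UDOFs} treated in part (i) of Theorem \ref{thm:LocalProjectionTwo}, the argument transcribes almost verbatim: the $\curl$-based functionals \eqref{eqn:VecHermiteDOFs3} and \eqref{eqn:VecHermiteDOFs8} vanish immediately because $\curl\grad(\Pi_{d,0}p-p)=0$, while the vertex, edge, face, and $\grad$-cell functionals reduce, after the same integrations by parts, to the defining DOFs \eqref{eqn:C1DOF1}, \eqref{eqn:C1DOF21}, \eqref{eqn:C1DOF22}, \eqref{eqn:C1DOF31}, \eqref{eqn:C1DOF32}, \eqref{eqn:C1DOF4} of $\Pi_{d,0}$.

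For \eqref{DEFcommute-2} I take $\rho=\curl\,\Pi_{c,1}p-\Pi_{c,2}\,\curl p\in\Vrc(T^\s)$ and check \eqref{eqn:StenbergHDiv}. The vertex DOFs \eqref{eqn:StenbergHDiv1} vanish because $\Pi_{c,1}p$ matches $p$ to first order at vertices by \eqref{eqn:VecHermiteDOFs1}; the edge-normal DOFs \eqref{eqn:StenbergHDiv2} vanish because $(\curl\Pi_{c,1}p)\cdot n_F$ along an edge is pinned by \eqref{eqn:VecHermiteDOFs3}; and the face-normal DOFs \eqref{eqn:StenbergHDiv3} are handled exactly as in part (ii) of Theorem \ref{thm:LocalProjectionTwo}, writing $(\curl\Pi_{c,1}p)\cdot n_F={\rm curl}_F\,\Pi_{c,1}p$ and integrating by parts on the face to transfer the test function onto the tangential trace, which matches that of $p$ via \eqref{eqn:VecHermiteDOFs2} and \eqref{eqn:VecHermiteDOFs6}. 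The crucial cell DOF \eqref{eqn:StenbergHDiv4} matches because its test space $\curl\Mrco(T^\s)$ is identical to that of \eqref{eqn:VecHermiteDOFs8}, and the divergence DOF \eqref{eqn:StenbergHDiv5} vanishes since $\dive\curl\Pi_{c,1}p=0$ and $\int_T\dive(\Pi_{c,2}\curl p)\kappa=\int_T\dive\curl p\,\kappa=0$ for $\kappa\in\mathring{V}_{d,r-3}^3(T^\s)$. For \eqref{DEFcommute-3} I take $\rho=\dive\,\Pi_{c,2}p-\Pi_3\,\dive p\in V_{d,r-3}^3(T^\s)$: the mean DOF \eqref{WDOFs1} vanishes by the divergence theorem together with \eqref{eqn:StenbergHDiv3} evaluated at $\kappa\equiv1$, and \eqref{WDOFs2} vanishes directly from \eqref{eqn:StenbergHDiv5}.

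I expect the genuinely delicate step to be the face-normal computation in \eqref{DEFcommute-2}, where the surface Stokes identity and the $\p F$ boundary terms must be tracked carefully and where the compatibility of the tangential DOFs \eqref{eqn:VecHermiteDOFs2} and \eqref{eqn:VecHermiteDOFs6} of $\Pi_{c,1}$ with the normal DOF \eqref{eqn:StenbergHDiv3} of $\Pi_{c,2}$ is actually invoked. Everything else is bookkeeping, made to work by the deliberate chaining of the interior test spaces ($\grad\mathring{M}_{d,r}^0(T^\s)$, then $\curl\Mrco(T^\s)$, then $\mathring{V}_{d,r-3}^3(T^\s)$) so that the differential operator carries the defining functionals of each space into those of the next.
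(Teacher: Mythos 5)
Your proposal is correct and follows essentially the same route as the paper's own proof: for each of \eqref{DEFcommute-1}--\eqref{DEFcommute-3} the paper likewise forms the difference $\rho$, shows it annihilates the DOF sets \eqref{eqn:VecHermiteDOFs}, \eqref{eqn:StenbergHDiv}, \eqref{WDOFs} using the face Stokes identity, the edge/vertex matching, the chained interior test spaces, and $\curl\grad=0$, $\dive\curl=0$, and then concludes by unisolvency. Incidentally, your citation of \eqref{eqn:VecHermiteDOFs3} for the edge-normal DOFs \eqref{eqn:StenbergHDiv2} in the proof of \eqref{DEFcommute-2} is the accurate one, whereas the paper cites \eqref{eqn:VecHermiteDOFs2} at that step, which appears to be a typo.
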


\section{Global Smooth Finite Element de Rham Complexes in Three Dimensions}\label{section-Global}
In this section we study
the global finite element spaces
induced by the degrees of freedom
in Section \ref{subsec-DOFs}.
To this end, we suppose that $\Omega \subset \mathbb{R}^3$
is a polyhedral domain.  Let $\mct$ be a simplicial triangulation
of $\Omega$, and let $\mct^\s$ be the simplicial triangulation obtained
by adding connecting each barycenter of $T\in \mct$ with its vertices, i.e.,
$\mct^\s$ is obtained by performing an Alfeld split to each $T\in \mct$.

The degrees of freedom in Lemmas \ref{lem:C1-DOF}, \ref{lem:H1curlDOFs}, \ref{lem:H1DivergenceDOFs}--\ref{lem:L2DOFs},
(cf.~Remarks \ref{rem:H2Remark} and \ref{rem:H1curlRemark})
lead to the following global spaces, for $r\ge 5$,
\begin{align*}
M_{d,r}^0(\mct^\s) &= \{\omega \in C^1(\Omega): \omega |_T\in M_{d,r}^0(T^\s)\ \forall T\in \mct, \text{ $\omega$ is $C^2$ at vertices}\},\\
M_{d,r-1}^1(\mct^\s) &= \{\omega \in [C^0(\Omega)]^3:\curl \omega \in [C^0(\Omega)]^3,\   \omega|_T\in M_{d,r-1}^1(T^\s) \ \forall T\in \mct,\ \text{$\omega$ is $C^1$ at vertices}\},\\
M_{d,r-2}^2(\mct^\s) & =\{\omega\in [C^0(\Omega)]^3:\ \Div \omega\in C^0(\Omega),\ \omega|_T\in M_{d,r-2}^2(T^\s)\ 
\forall T\in \mct\},\\
M_{d,r-3}^3(\mct^\s) & = \{\omega\in C^0(\Omega):\ \omega|_T\in M_{d,r-3}^3(T^\s)\ \forall T\in \mct\},\\
\Mrc(\mct^\s)
& = \{\omega\in [C^0(\Omega)]^3:\ \omega|_T\in \Mrc(T^\s)\ \forall T\in \mct, \text{ $\omega$ is $C^1$
at vertices}\},\\
M_{r-2}^2(\mct^\s)& = \{\omega \in  [C^0(\Omega)]^3 : \omega|_T \in M_{r-2}^2(T^\s
)\ \forall T\in \mct\},\\
\Vrc(\mct^\s)
& = \{\omega\in L^2(\Omega):\ \Div \omega\in L^2(\Omega),\ \omega|_T\in \Vrc(T^\s)\ \forall T\in \mct,
\text{ $\omega$ is $C^0$ at vertices}\},\\
V_{d,r-3}^3(\mct^\s) & = \{\omega \in L^2(\Omega): \omega|_T\in V^3_{d,r-3}(T^\s)\ \forall T\in \mct\}.
\end{align*}
Clearly the following sequences of spaces
%
\begin{subequations}
\label{eqn:Globalcomplex3D2}
\begin{alignat}{4}
\label{eqn:Globalcomplex3D21}
&\mathbb{R}\
 {\xrightarrow{\hspace*{0.5cm}}}\
M_{d,r}^0(\mct^\s)\
\stackrel{\grad}{\xrightarrow{\hspace*{0.5cm}}}\
  M_{d,r-1}^1(\mct^\s)\
&&\stackrel{\curl}{\xrightarrow{\hspace*{0.5cm}}}\
M_{d,r-2}^2(\mct^\s)\
&&\stackrel{{\dive}}{\xrightarrow{\hspace*{0.5cm}}}\
M_{d,r-3}^3(\mct^\s)\
&&\xrightarrow{\hspace*{0.5cm}}\  
0,\\
\label{eqn:Globalcomplex3D22}
&\mathbb{R}\
 {\xrightarrow{\hspace*{0.5cm}}}\
M_{d,r}^0(\mct^\s)\
\stackrel{\grad}{\xrightarrow{\hspace*{0.5cm}}}\
M_{d,r-1}^1(\mct^\s)\
&&\stackrel{\curl}{\xrightarrow{\hspace*{0.5cm}}}\
M_{r-2}^2(\mct^\s)\
&&\stackrel{{\dive}}{\xrightarrow{\hspace*{0.5cm}}}\
V_{d,r-3}^3(\mct^\s)\
&&\xrightarrow{\hspace*{0.5cm}}\  0,\\
\label{eqn:Globalcomplex3D23}
&\mathbb{R}\
{\xrightarrow{\hspace*{0.5cm}}}\
M_{d,r}^0(\mct^\s)\
\stackrel{\grad}{\xrightarrow{\hspace*{0.5cm}}}\
 \Mrc(\mct^\s)\
&&\stackrel{\curl}{\xrightarrow{\hspace*{0.5cm}}}\
\Vrc(\mct^\s)\
&&\stackrel{{\dive}}{\xrightarrow{\hspace*{0.5cm}}}\
V_{d,r-3}^3(\mct^\s)\
&&\xrightarrow{\hspace*{0.5cm}}\  0
\end{alignat}
\end{subequations}
forms a complex.  In addition, we can define commuting projections. For example, for the first sequence we can define 
$\pi_{d,i}$ such that $\pi_{d,i} \omega |_T= \Pi_{d,i} (\omega|_T)$ for all $T \in \mct$, and 
by using Theorem \ref{thm:LocalProjectionTwo}, we get the following commuting diagram for the second sequence
%
%
\eqref{eqn:Globalcomplex3D22}:
\begin{alignat*}{2}
\begin{array}{ccccccccccccc}
&\mathbb{R}& {\xrightarrow{\hspace*{0.1cm}}}&
 C^\infty(\Omega)\!\!\!
&\stackrel{\grad}{\xrightarrow{\hspace*{0.5cm}}}&
\!\!\![C^\infty(\Omega)]^3
\!\!\!&\stackrel{\curl}{\xrightarrow{\hspace*{0.5cm}}}&
\!\!\!  [C^\infty(\Omega)]^3
\!\!\!\!& \stackrel{{\rm div}}{\xrightarrow{\hspace*{0.5cm}}}& 
\!\!\! C^\infty(\Omega)
&
\!\!\!\xrightarrow{\hspace*{0.1cm}}  &0\\
&&&\;\;\;\;\;\;\;\;\;\;\;\;\downarrow {\scriptstyle \pi_{d,0}} & &
\downarrow{\scriptstyle \pi_{d,1}}
& &\downarrow{\scriptstyle  \pi_{d,2}}
& &
\;\;\;\;\downarrow{\scriptstyle \pi_{d,3}}\\
&\mathbb{R}& {\xrightarrow{\hspace*{0.1cm}}}&
M_{d,r}^0(\mct^\s)\!\!\!
&\stackrel{\grad}{\xrightarrow{\hspace*{0.5cm}}}&
{M_{d,r-1}}(\mct^\s) 
\!\!\!&\stackrel{\curl}{\xrightarrow{\hspace*{0.5cm}}}&
\!\!\!M_{d,r-2}^2(\mct^\s)
\!\!\!\!& \stackrel{{\rm div}}{\xrightarrow{\hspace*{0.5cm}}}& 
\!\!\!M^3_{d,r-3}(\mct^\s)
&
\!\!\!\xrightarrow{\hspace*{0.1cm}}  &0.
\end{array}
\end{alignat*}
Similar results hold for the other two  sequences in \eqref{eqn:Globalcomplex3D2} as well.


Note that the top row is an exact sequence if $\Omega$ is contractible; see for example \cite{Costabel}. 
In the next result we will show that the bottom row is also exact on contractible domains. 
Unfortunately,  the projections by themselves do not prove the discrete exactness property because they
require extra smoothness.  However, the exactness of the first and last mapping
can be proved easily, and the the exactness of the second mapping will follow from a counting argument.

\begin{theorem}\label{thm:GlobalComplexABC}
Suppose that $\Omega$ is contractible.  
Then the complexes in \eqref{eqn:Globalcomplex3D2}
are exact.
\end{theorem}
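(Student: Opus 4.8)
The plan is to compare the three discrete sequences in \eqref{eqn:Globalcomplex3D2} with the continuous de~Rham complex on $\Om$, which is exact because $\Om$ is contractible (see \cite{Costabel}), and to use the globally assembled commuting projections obtained by patching the local projections of Theorems~\ref{thm:LocalProjectionTwo}--\ref{thm:LocalProjectionThree}. As the introductory remark notes, these projections cannot by themselves transfer exactness, since a surjective cochain map out of an exact complex controls the target only up to the cohomology of its kernel, and the kernels here are nontrivial. I would therefore treat all three sequences uniformly—writing their four spaces as $X^0\xrightarrow{\grad}X^1\xrightarrow{\curl}X^2\xrightarrow{\dive}X^3$—and establish exactness directly at three of the four interior nodes, reserving a single dimension count for the remaining one. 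Throughout I would lean on the local exactness already proven in Theorem~\ref{thm:AllSequencesC2Exact} and on the vertex regularity of Lemma~\ref{lem:SmoothAtVerts}.

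Exactness at $X^0$ is immediate: $\Om$ is connected, so any $X^0$-function with vanishing gradient is constant, i.e.\ $\ker\grad$ is the image of $\mathbb R$. For the gradient--curl node I would take $\omega\in X^1$ with $\curl\omega=0$; since $\omega\in [C^0(\Om)]^3\subset H(\curl;\Om)$ and $\Om$ is contractible, $\omega=\grad\phi$ for some $\phi\in H^1(\Om)$, unique up to an additive constant. Restricting to a single $T\in\mct$ and invoking the local exactness of \eqref{eqn:AllSequencesC2} shows that $\grad(\phi|_T)=\omega|_T$ is a local gradient, so $\phi|_T\in M_{d,r}^0(T^\s)$; globally $\phi$ is continuous with $\grad\phi=\omega\in[C^0(\Om)]^3$, hence $\phi\in C^1(\Om)$, and because $\omega$ is $C^1$ at the vertices its derivatives make $\phi$ into a $C^2$ function there. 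Thus $\phi$ lies in $M_{d,r}^0(\mct^\s)=X^0$ and $\ker\curl=\operatorname{im}\grad$.

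Surjectivity of the final map is where the commuting projections can be used safely, because one only needs to produce a single preimage from a smooth representative. Given $g$ in the last space $X^3$, I would first lift it to a smooth $q\in C^\infty(\Om)$ with $\pi_3 q=g$ (the last projection carries $C^\infty(\Om)$ onto $X^3$, since its degrees of freedom are realizable by smooth functions); next, the exactness of the continuous complex on the contractible domain yields a smooth vector field $v$ with $\dive v=q$; finally the commuting relation \eqref{commute-3} (resp.~\eqref{ABCcommute-3}, \eqref{DEFcommute-3}) gives $\dive(\pi_{2}v)=\pi_3\dive v=\pi_3 q=g$, so $\pi_2 v\in X^2$ is the required preimage. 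The same three-step lift applies verbatim to each of the three sequences.

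It then remains to prove exactness at the curl--divergence node, and this I would obtain by counting. Rank--nullity, applied along the augmented sequence $\mathbb R\to X^0\to X^1\to X^2\to X^3\to 0$ together with the three exactness facts just established, reduces the claim $\ker\dive=\operatorname{im}\curl$ to the single numerical identity $\dim X^0-\dim X^1+\dim X^2-\dim X^3=1$. Each of these global dimensions is computable from the degrees of freedom in Lemmas~\ref{lem:C1-DOF}, \ref{lem:H1curlDOFs}, \ref{lem:H1DivergenceDOFs}--\ref{lem:L2DOFs} (and Remark~\ref{rem:StenbergCounting} for the $C^{2-k}$-enhanced spaces) by summing the contributions attached to the vertices, edges, faces, and cells of $\mct$. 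Forming the alternating sum, the per-entity contributions should collapse to $\#\Delta_0(\mct)-\#\Delta_1(\mct)+\#\Delta_2(\mct)-\#\Delta_3(\mct)$, which equals $1$ since this is the Euler characteristic of the contractible $\Om$. I expect the main obstacle to be exactly this bookkeeping: one must verify that the extra vertex degrees of freedom carrying the enhanced $C^{2-k}$ smoothness enter the alternating sum with the correct signs so that the telescoping to the Euler characteristic survives, and that it does so simultaneously for all three sequences despite their differing middle spaces.
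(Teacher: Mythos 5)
Your proposal is correct, and its skeleton coincides with the paper's proof: both treat the gradient node by lifting a curl-free field to a scalar potential via the continuous-level result of \cite{Costabel}, then use the local exactness on each $T\in\mct$ together with the $C^1$ vertex condition to conclude the potential lies in $M_{d,r}^0(\mct^\s)$; and both settle the curl--divergence node by a dimension count that collapses to the Euler relation. Indeed, your reduction of that node to the identity $\dim X^0-\dim X^1+\dim X^2-\dim X^3=1$ is exactly the computation the paper performs: it computes $\dim\curl M_{d,r-1}^1(\mct^\s)$ and $\dim\ker M_{r-2}^2(\mct^\s)$ entity by entity and finds their difference equals $\bbV-\bbE+\bbF-\bbT-1=0$, so the ``bookkeeping'' you flag as the main obstacle is carried out explicitly in the paper for sequence \eqref{eqn:Globalcomplex3D22} (the other two are asserted to be similar) and does telescope as you predict, enhanced vertex DOFs included. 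The one genuine divergence is the surjectivity of the final map: the paper simply cites Zhang \cite{Zhang04} for the fact that $\dive: M_{r-2}^2(\mct^\s)\to V_{d,r-3}^3(\mct^\s)$ is onto, whereas you construct a preimage by lifting the target to a smooth function realizing its degrees of freedom, solving $\dive v=q$ smoothly on the contractible domain, and invoking the commuting diagram. Your route has real advantages --- it is uniform across all three sequences and independent of \cite{Zhang04} --- but it rests on a lemma you only assert parenthetically: that the projection onto the last space maps $C^\infty(\Omega)$ \emph{onto} $V_{d,r-3}^3(\mct^\s)$ (resp.\ $M_{d,r-3}^3(\mct^\s)$), i.e., that any prescribed DOF values are realizable by a globally smooth function. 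Since those DOFs are point values and $L^2$-moments against linearly independent piecewise polynomial weights, this follows from density of $C^\infty(\Omega)$ in $L^2(\Omega)$ and linear independence of the corresponding functionals, but you should record that argument explicitly; with it in place, your proof is complete and at that step slightly more self-contained than the paper's.
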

\begin{proof}
We prove exactness of the second sequence \eqref{eqn:Globalcomplex3D22}.  The other two can be proved by similar arguments.

(i) Let $\omega \in M_{d,r-1}^1(\mct^\s)$ with $\curl \omega=0$.   Then using a standard result from \cite{Costabel} (see also \cite{GiraultRaviartBook}) there exists a $\rho \in H^2(\Omega)$ such that $\grad \rho=\omega$. Since $\omega$ is $C^1$ at the vertices,  
$\rho$ is $C^2$ at the vertices. Also on each $T \in \mct$,  $\omega \in M_{d,r-1}^1(T^\s)$, and using that $\grad \rho=\omega$, we have $\rho \in M_{d,r}^0(T^\s)$. Hence, $\rho \in M_{d,r}^0(\mct^\s)$.

(ii) Next, it is shown in \cite{Zhang04} that $\dive:M_{r-2}^2(\mct^\s) \to V_{d, r-3}^3(\mct^\s)$ is a surjection for $r\ge 5$.

(iii) Finally, we show that $\curl:M_{d,r-1}^1(\mct^\s) \to M_{ r-2}^2(\mct^\s)$ is a surjection for $r \ge 5$ using a counting argument. Let $\bbV$, $\bbE$, $\bbF$, and $\bbT$ denote
the number of  vertices,  edges,  faces, 
and tetrahedron in $\mct$, respectively.  
We then set 
\[
{\rm ker}\, M_{r-2}^ 2(\mct^\s):= \{\omega \in M_{r-2}^2(\mct^\s):\ \Div \omega=0\}.
\]
By the rank-nullity theorem, part (i), and Lemmas \ref{lem:C1-DOF} and \ref{lem:H1curlDOFs}, we have that
\begin{align*}
\dim \curl M_{d,r-1}^1(\mct^\s)
&= \dim M_{d,r-1}^1(\mct^\s) - \dim \grad M_{d,r}^0(\mct^\s)\\
&= \dim M_{d,r-1}^1(\mct^\s) - \dim  M_{d,r}^0(\mct^\s)+1\\
&\hspace*{-2cm} = \Big(12 \bbV + \big[3(r-4) + 3(r-3)\big]\bbE+ \big[\frac12 (r-2)(r-3) + (r-3)(r-5)
+(r-3)(r-4)\big]\bbF\\
&\hspace*{-2cm}\qquad + (r-3)(2r-5)(r-4)\bbT\Big)-\Big(10\bbV+\big[(r-5) + 2(r-4)\big]\bbE + \big[\frac12 (r-5)(r-4)\\
&\hspace*{-2cm}\qquad \qquad +\frac12 (r-3)(r-2)\big]\bbF + \frac23 (r-4)(r-3)(r-2)\bbT\Big)+1\\
& = 2\bbV +(3r-8)\bbE + \big(\frac32 r^2-\frac{21}{2}r+17\big)\bbF
+ \big(\frac43 r^3-13 r^2+\frac{125}{3}r-44\big)\bbT+1.
\end{align*}
Likewise, by the rank-nullity theorem, part (ii), and Lemmas  \ref{lem:H1divDOFs}, and \ref{lem:L2DOFs},
we obtain
\begin{align*}
\dim  {\rm ker}\, M_{r-2}^ 2(\mct^\s)
&= \dim M_{r-2}^2(\mct^\s) - \dim V_{d,r-3}^3(\mct^\s)\\
& = \Big(3\bbV +3(r-3)\bbE + \frac32 (r-3)(r-4)\bbF\\
&\qquad + 3 \big[ 1+4(r-3)+3(r-3)(r-4) + \frac46 (r-5)(r-4)(r-3)\big]\bbT\Big)\\
&\qquad - \Big(\frac46 r(r-1)(r-2)\bbT\Big)\\
& = 3\bbV + 3(r-3)\bbE + \frac32 (r-3)(r-4)\bbF
+\big[\frac43 r^3 -13r^2 + \frac{125}{3} r - 45\big]\bbT.
\end{align*}
We then find that
\begin{align*}
\dim {\rm ker}\, M_{r-2}^ 2(\mct^\s) - \dim \curl M_{d,r-1}^1(\mct^\s)
& = \bbV-\bbE+\bbF-\bbT-1=0,
\end{align*}
by an Euler relation.
Since $\curl M^1_{d,r-1}(\mct^\s) \subset {\rm ker}\, M_{r-2}^ 2(\mct^\s)$,
we have $\curl M^1_{d,r-1}(\mct^\s)  = {\rm ker}\, M_{r-2}^ 2(\mct^\s)$,
and therefore the complex \eqref{eqn:Globalcomplex3D2} is exact.
\end{proof}

\section{Concluding Remarks}\label{section-Conclusion}
In this paper we have developed several new local discrete
de Rham complexes with varying level of smoothness on Alfeld
splits.  These results lead, e.g.,  to characterizations of discrete
divergence-free subspaces for the Stokes problem
and local dimension formulas of smooth piecewise polynomial spaces.
We have also constructed analogous global complexes in three dimensions
and projections that commute with the differential operators.
In the future, we plan on using our techniques to study different 
types of splits (e.g., Powell-Sabin, Worsey-Frain) as done in \cite{ChristiansenHu}
for low-order approximations.
In addition, we plan to construct degrees of freedom for the spaces
in any spatial dimension.

\appendix

\section{Proof of Lemma \ref{lem:SmoothAtVerts}}
It is shown in \cite{Alfeld84} that functions in $M_{d,r}^0(T^\s)$ are $C^2$ on $\Delta_0(T)$.
Moreover, it is clear that the result is true in the cases $k=2,3$.  Thus, it suffices to prove the result
$k=1$.  For readability, we prove the result with $r$ replaced by $r-1$.

Define $\tilde{M}_{d,r-1}^1(T^\s) = \{\omega \in M_{d,r-1}^1(T^\s):\ \text{$\omega$ is $C^1$ on $\Delta_0(T^\s)$}\}$.
We show that $\tilde{M}_{d,r-1}^1(T^\s) =M_{d,r-1}^1(T^\s)$.

Let $\kappa\in M_{r-2}^2(T^\s)\subset \Vrc(T^\s)$ satisfy $\Div \kappa=0$.
Using Theorem \ref{thm:AllSequencesC2Exact}
there exists $\omega\in \Mrc(T^\s)$ such that $\kappa = \curl \omega$\footnote{Note that
the proof of Theorem \ref{thm:AllSequencesC2Exact} does not depend on Lemma \ref{lem:SmoothAtVerts}.}.
Because $\kappa$ is continuous, we have $\omega \in M_{d,r-1}^1(T^\s)$,
and so $\omega\in \tilde{M}_{d,r-1}^1(T^\s)$.  Consequently, we easily deduce that
\begin{align*}
\bbR  {\xrightarrow{\hspace*{0.5cm}}}\
{ M}_{d,r}^0(T^\s)\
\stackrel{\grad}{\xrightarrow{\hspace*{0.5cm}}}\
\tilde{M}_{d,r-1}^1(T^\s)\
\stackrel{\curl}{\xrightarrow{\hspace*{0.5cm}}}\
{M}_{r-2}^2(T^\s)\
\stackrel{{\rm div}}{\xrightarrow{\hspace*{0.5cm}}}\
{ V}_{d,r-3}^3(T^\s)\
\xrightarrow{\hspace*{0.5cm}}\
 0
\end{align*}
is exact.  This implies
that 
\[
\dim \tilde{M}_{d,r-1}^1(T^\s) = \dim \grad M_{d,r}^0(T^\s) + \dim M_{r-2}^2(T^\s) - \dim V_{d,r-3}^3(T^\s).
\]
On the other hand, the exactness of \eqref{eqn:AllSequencesC22}
yields
\[
\dim {M}_{d,r-1}^1(T^\s) = \grad M_{d,r}^0(T^\s) + \dim M_{r-2}^2(T^\s) - \dim V_{d,r-3}^3(T^\s),
\]
and therefore $\dim \tilde{M}_{d,r-1}^1(T^\s)  = \dim M_{d,r-1}^1(T^\s)$.
Since $\tilde{M}_{d,r-1}^1(T^\s)  \subset  M_{d,r-1}^1(T^\s)$, we conclude
that $\tilde{M}_{d,r-1}^1(T^\s)  =  M_{d,r-1}^1(T^\s)$. \hfill \qed

\section{Proof Theorem \ref{thm:LocalProjectionOne}}
Property \eqref{commute-1} is the same as \eqref{ABCcommute-1},
so we only need to prove \eqref{commute-2} and \eqref{commute-3}.

(i) {\em Proof of \eqref{commute-2}.}
Let $p\in [C^\infty(\Omega)]^3$, and set
$\omega = \Pi_{d,2} \curl p - \Pi_2 \curl p\in M_{r-2}^2(T^\s)$.
Then using the definitions of $\Pi_{d,2}$ and $\Pi_2$,
and by using the DOFs \eqref{XDOFs1}--\eqref{XDOFs5},
\eqref{eqn:H1DivergenceDOFs1}--\eqref{eqn:H1DivergenceDOFs2},
\eqref{eqn:H1DivergenceDOFs4}, \eqref{eqn:H1DivergenceDOFs6}--\eqref{eqn:H1DivergenceDOFs7}, and \eqref{ABCcommute-2},
we conclude that $\omega$ vanishes on the DOFs of $M_{r-2}^2(T^\s)$, \eqref{XDOFs}.
Thus, applying Lemma \ref{lem:H1divDOFs}, we get $\omega=0$.  Using
\eqref{ABCcommute-2}, we have
\[
\curl \Pi_{d,1} p = \Pi_{d,2} \curl p = \Pi_2 \curl p,
\]
and so \eqref{commute-2} is satisfied.

(ii) {\em Proof of \eqref{commute-3}.}
Let $p\in [C^\infty(T)]^3$ and set $\rho = \Div \Pi_2 p - \Pi_3 \Div p\in V_{d,r-3}^3(T^\s)$.
We show that $\rho$ vanishes on the DOFs \eqref{WDOFs}.
First, by \eqref{WDOFs1}, the divergence theorem, and \eqref{XDOFs3}, we have
\begin{align*}
\int_T
\rho\, \mathrm{dx} = \int_T \big(\Div \Pi_2 p - \Div p\big)\, \mathrm{dx}
= \int_{\p T} \big(\Pi_2 p - p\big)\cdot n\, \mathrm{dA}=0.
\end{align*}
Next, we apply the definitions of $\Pi_2$, $\Pi_3$, and the 
DOFs \eqref{XDOFs5}, \eqref{WDOFs} to obtain
\[
\int_T \rho \kappa\, \mathrm{dx} =0\qquad \forall \kappa\in \mathring{V}_{d,r-3}(T^\s).
\]
Finally, applying Lemma \ref{lem:L2DOFs}, we conclude that $\rho\equiv 0$.
This concludes the proof.\hfill \qed

\section{Proof Theorem \ref{thm:LocalProjectionThree}}
\begin{proof}
(i) {\em Proof of \eqref{DEFcommute-3}.}
Set $\rho = \Div \Pi_{c,2} p - \Pi_3 \Div p\in V_{d,r-3}^3(T^\s)$.  We show that 
$\rho$ vanishes on \eqref{WDOFs}.  We easily find that
\begin{align*}
\int_T \rho \kappa\, \mathrm{dx} = 0\qquad \forall \kappa\in \mathring{V}_{d,r-3}^3(T^\s)
\end{align*}
by \eqref{WDOFs2} and \eqref{eqn:StenbergHDiv5}.  We also have
\begin{align*}
\int_T \rho\, \mathrm{dx} = \int_T \big(\Div \Pi_{c,2} p - \Div p)\, \mathrm{dx} = \int_{\p T} \big(\Pi_{c,2} p - p\big)\cdot n\, \mathrm{dA}=0
\end{align*}
by \eqref{eqn:StenbergHDiv3}.  Thus $\rho$ vanishes on \eqref{WDOFs}, and so $\rho\equiv 0$.

(ii) {\em Proof of \eqref{DEFcommute-2}.} Set $\rho = \curl \Pi_{c,1} p - \Pi_{c,2} \curl p\in \Vrc(T^\s)$.
We show that $\rho$ vanishes on \eqref{eqn:StenbergHDiv}.

We clearly have $\rho(a)=0$ for all $a\in \Delta_0(T)$ by \eqref{eqn:StenbergHDiv1} and  \eqref{eqn:VecHermiteDOFs1}.
Furthermore,
\begin{align*}
\int_e (\rho\cdot n_F)\kappa\, \mathrm{ds}=0\qquad \forall \kappa\in \Delta_{r-4}(e)\quad \forall e\in \Delta_1(F),\ \forall F\in \Delta_2(T),
\end{align*}
by \eqref{eqn:StenbergHDiv2} and \eqref{eqn:VecHermiteDOFs2}.  Next, we apply Stokes Theorem and \eqref{eqn:StenbergHDiv2},
\eqref{eqn:VecHermiteDOFs2}, \eqref{eqn:VecHermiteDOFs6} to get
\begin{align*}
\int_F (\rho\cdot n_F)\kappa\, \mathrm{dA} 
&= \int_F ({\rm curl}_F \Pi_{c,1} p - \curl_F p)\kappa\, \mathrm{dA}\\
& = \int_F (\Pi_{c,1} p - p)\cdot {\rm rot}_F \kappa\, \mathrm{dA} + \int_{\p F} (\Pi_{c,1} p - p)\cdot t \kappa \, \mathrm{ds}\\
& = \int_F (n_F\times (\Pi_{c,1} p - p)\times n_F)\cdot ({\rm grad} \kappa\times n_F)\, \mathrm{dA} + \int_{\p F} (\Pi_{c,1} p - p)\cdot t \kappa \, \mathrm{ds}\\
&=0
\end{align*}
for all $\kappa\in \pol_{r-5}(F)$.  Finally, using \eqref{eqn:StenbergHDiv4} and \eqref{eqn:VecHermiteDOFs8}, we have
\begin{align*}
\int_T \rho \cdot \kappa\, \mathrm{dx} & = 0\qquad \forall \kappa\in \curl \Mrco(T^\s),
\end{align*}
and by \eqref{eqn:StenbergHDiv4}, we have
\begin{align*}
\int_T (\Div \rho)\kappa\, \mathrm{dx}=0\qquad \forall \kappa\in \mathring{V}_{d,r-3}^3(T^\s).
\end{align*}
Thus, $\rho$ vanishes on all the DOFs \eqref{eqn:StenbergHDiv}, and therefore $\rho\equiv 0$.

(iii) {\em Proof of \eqref{DEFcommute-1}.}
Set $\rho = \grad \Pi_{d,0} p - \Pi_{c,1} \grad p\in \Mrc(T^\s)$.
We show that $\rho$ vanishes on \eqref{eqn:VecHermiteDOFs}.

We have $D^\alpha \rho(a)=0$ for all $|\alpha|\le 1$ and $a\in \Delta_0(T)$
by \eqref{eqn:VecHermiteDOFs1} and \eqref{eqn:C1DOF1}, and
\begin{align*}
\int_e \rho\cdot \kappa \, \mathrm{ds} = \int_e \big(\grad \Pi_{d,0} p - \grad p)\cdot \kappa\, \mathrm{ds}=0\qquad \forall \kappa\in [\pol_{r-5}(e)]^3
\end{align*}
by \eqref{eqn:VecHermiteDOFs2} and \eqref{eqn:C1DOF1}--\eqref{eqn:C1DOF22}.  Furthermore, we have
\begin{align*}
\int_e (\curl \rho|_F \cdot n_F)\kappa\, \mathrm{ds} = 0
\end{align*}
by \eqref{eqn:VecHermiteDOFs3}.

Let $\kappa\in \pol_{r-4}(F)$.  Then
\begin{align*}
\int_F (\rho \cdot n_F) \kappa\, \mathrm{dA}
& = \int_F\big( \grad \Pi_{d,0} p - \grad p)\cdot n_F \kappa \, \mathrm{dA}=0
\end{align*}
by \eqref{eqn:VecHermiteDOFs4} and \eqref{eqn:C1DOF32}.  Moreover, we have
\begin{align*}
\int_F (n_F\times \rho \times n_F)\cdot \kappa\, \mathrm{dA}=0\qquad \forall \kappa\in  D_{r-5}(F)
\end{align*}
by using the exact same arguments as those found in the proof of Theorem \ref{thm:LocalProjectionTwo}.

Finally, we apply \eqref{eqn:VecHermiteDOFs7} and \eqref{eqn:C1DOF4} to get
\begin{align*}
\int_T \rho \cdot \kappa \, \mathrm{dx}=0\qquad \forall \kappa\in \grad \mathring{M}^0_{d,r}(T^\s),
\end{align*}
and use \eqref{eqn:VecHermiteDOFs8} to get
\begin{align*}
\int_T \curl \rho \cdot \kappa \, \mathrm{dx}=0\qquad \forall \kappa\in \curl \Mrco(T^\s).
\end{align*}
Thus, $\rho$ vanishes on the DOFs \eqref{eqn:VecHermiteDOFs}, and thus $\rho\equiv 0$.
\end{proof}

\end{document}